\newcommand{\non}{\nonumber}
\newcommand{\ml}{\mathcal{L}}
\newcommand{\rd}{\mathrm{d}}
\newcommand{\mA}{\mathcal{A}}
\newcommand{\la}{\langle}
\newcommand{\ra}{\rangle}
\newtheorem{theorem}{Theorem}[section]
\newtheorem{corollary}[theorem]{Corollary}
\newtheorem{lemma}[theorem]{Lemma}
\newtheorem{proposition}[theorem]{Proposition}
\newtheorem{remark}[theorem]{Remark}
\numberwithin{equation}{section}
\begin{document}
\title{Global Existence, Uniform Boundedness, and Stabilization in a Chemotaxis System with Density-Suppressed Motility and Nutrient Consumption}
\author{Jie Jiang}
\address{Innovation Academy for Precision Measurement Science and Technology, Chinese Academy of Sciences, Wuhan 430071, HuBei Province, P.R. China}
\email{jiang@apm.ac.cn, jiang@wipm.ac.cn}

\author{Philippe Lauren\c{c}ot}
\address{Institut de Math\'ematiques de Toulouse, UMR~5219, Universit\'e de Toulouse, CNRS \\ F--31062 Toulouse Cedex~9, France}
\email{laurenco@math.univ-toulouse.fr}

\author{Yanyan Zhang}
\address{School of Mathematical Sciences, Shanghai Key Laboratory of Pure Mathematics and Mathematical Practice, East China Normal University, Shanghai 200241, P.R. China}
\email{yyzhang@math.ecnu.edu.cn}

\keywords{global existence - boundedness  - comparison - stabilization}
\subjclass{35B60 - 35K51 - 35K65 - 35B40 - 35Q92}

\date{\today}

\begin{abstract}
Well-posedness and uniform-in-time boundedness of classical solutions are investigated for a three-component parabolic system which describes the dynamics of a population of cells interacting with a chemoattractant and a nutrient. The former induces a chemotactic bias in the diffusive motion of the cells and is accounted for by a density-suppressed motility. Well-posedness is first established for generic positive and non-increasing motility functions vanishing at infinity. Growth conditions on the motility function guaranteeing the uniform-in-time boundedness of solutions are next identified. Finally, for sublinearly decaying motility functions, convergence to a spatially homogeneous steady state is shown, with an exponential rate for consumption rates behaving linearly near zero.
\end{abstract}

\maketitle

%
%
\pagestyle{myheadings}
\markboth{\sc{J.~Jiang, Ph.~Lauren\c{c}ot} \& Y.~Zhang}{\sc On a Chemotaxis System with Density-Suppressed Motility and Nutrient Consumption}

\section{Introduction}\label{sec1}

Spatially periodic stripe patterns are  ubiquitous in biological systems and often play vital roles in embryogenesis and development. However, the underlying developmental mechanism remains  unclear and attracts a lot of research interest. Recently, a mathematical model for autonomous periodic stripe pattern formation was proposed in \cite{2011Science}:
\begin{equation}\label{smod0}
\begin{split}
	& \partial_t u=\Delta (u \gamma(v)) + \theta u f(n) \;\;\text{ in }\;\; (0,\infty)\times\Omega\,, \\ 	
	& \partial_t v= D_v \Delta v + \alpha u - \beta v \;\;\text{ in }\;\; (0,\infty)\times\Omega\,, \\
	& \partial_t n= D_n \Delta n - k_s \theta u f(n) \;\;\text{ in }\;\; (0,\infty)\times\Omega\,, 	
\end{split}
\end{equation}
where the consumption rate $f$ is given by
\begin{equation*}
	f(s) = \frac{s^2}{s^2+K_n}\,, \qquad s\ge 0\,.
\end{equation*}
Here, $\Omega$ is a bounded domain of $\mathbb{R}^N$, $N\ge 1$, and $u$, $v$, and $n$ denote the cell density, the signal concentration, and the nutrient level, respectively, while the cell motility $\gamma$ is a positive function and $\theta$, $D_v$, $D_n$, $k_s$, and $K_n$ are positive constants. The key feature of this model is a density-suppressed cellular motility $\gamma$, which stands for a repressive effect of the signal (and hence of the cell density) on  the cell motility. More precisely, cells perform random walks via the swim-and-tumble motion at low concentrations and are more motile while, at high concentrations, these cells tumble incessantly, a process which impedes their motion and results in a vanishing macroscopic motility. Numerical and experimental analyses indicate that such a motility control can establish a spatially periodic structure in a growing bacteria population without the recourse to other mechanisms. Subsequently, in order to better understand the effect of density-suppressed motility in pattern formations, a simplified two-component version of \eqref{smod0} with a population growth was further analyzed numerically and experimentally in \cite{2012PRL}:
\begin{equation}\label{prlmod0}
\begin{split}
	& \partial_t u =\Delta (u\gamma(v)) \;\;\text{ in }\;\; (0,\infty)\times\Omega\,,\\ 	
	& \tau\partial_t v =\Delta v+ u - \beta  v \;\;\text{ in }\;\; (0,\infty)\times\Omega\,.	
\end{split}
\end{equation}
This model can also correctly capture the dynamics at the propagating front where new stripes are formed. We remark that system~\eqref{prlmod0} belongs to the general class of chemotaxis models proposed by Keller \& Segel in their seminal work \cite{1971KS} in the 1970s, based on a local sensing mechanism for chemotaxis. It is worth pointing out  here that system~\eqref{prlmod0} is readily obtained from \eqref{smod0} by setting $\theta=0$.

As already mentioned, a key feature of chemotaxis models involving density-suppressed motility is the possible degeneracy of the diffusion in the cell's equation when the signal intensity $v$ becomes unbounded and the motility $\gamma$ vanishes at infinity. Then global well-posedness and boundedness of classical solutions to reaction-diffusion systems like \eqref{smod0} or \eqref{prlmod0} are not directly provided by classical theories for parabolic systems. The analysis of these issues have thus attracted a lot of interest in the mathematical literature recently, with a focus on the simplified two-component system~\eqref{prlmod0}. 
	
More precisely, when $N=2$, global existence and boundedness of classical solutions are first obtained in \cite{TW2017} when assuming $\gamma$ to be bounded from below by a positive constant. This assumption is relaxed in \cite{FuJi2020c} where global existence is shown for any positive and non-increasing motility function $\gamma$ converging to zero at infinity, permitting a vanishing limit at infinity. When $\tau=0$, uniform-in-time boundedness of classical solutions with a vanishing motility is  first obtained when $\gamma(s)=s^{-k}$ for any $k<\infty$ in \cite{AhnYoon2019} and later extended in \cite{FuJi2020b} to $\tau\ge 0$ and any non-increasing motility function decaying slower than any negative exponential function at infinity. Negative exponential functions appear to be critical for the dynamics of \eqref{prlmod0} and the specific choice of motility function $\gamma(s)=e^{-\chi s}$, $\chi>0$, is studied in \cite{JW2020} and  \cite{FuJi2020,FuJi2020c} by different methods, uncovering a critical mass phenomenon with the same threshold value as the classical Keller-Segel system. More precisely, classical solutions to \eqref{prlmod0} are uniformly-in-time bounded when the initial mass lies below a certain threshold value \cite{BLT2020,FuJi2020,FuJi2020c}, while unbounded global classical solutions are constructed for some initial conditions having an initial mass exceeding this threshold value \cite{FuJi2020,FuJi2020c,JW2020}. 

In higher space dimensions $N\ge 3$, global existence and boundedness of classical solutions to \eqref{prlmod0} are established when $\tau=0$ and  $\gamma(s)=s^{-k}$ with any $k<2/(N-2)$ in \cite{AhnYoon2019}, as well as in \cite{FuJi2020b,Wang20} by different approaches. As shown recently in \cite{JiLau2021a}, when $\tau=0$, global existence of classical solutions to \eqref{prlmod0} is a generic feature and is true for arbitrary positive motility functions $\gamma$ which are not necessarily monotone decreasing or decaying to zero at infinity. Uniform-in-time boundedness of classical solutions to \eqref{prlmod0} is established in \cite{JiLau2021a} when $\gamma(s)\sim s^{-k}$ as $s\rightarrow \infty$ with any $k<N/(N-2)$, a range which is likely to be optimal and extends the above mentioned papers, as well as \cite{Jiang2020} where the case $k<\min\{1,4/(N-2)\}$ is considered. When $\tau>0$, global existence of classical solutions to \eqref{prlmod0} is asserted  in \cite{FuSe2021} for any positive, non-increasing and asymptotically vanishing motility. To the best of our knowledge, uniform-in-time  boundedness is only available when $\gamma(s)=s^{-k} $ with $k\leq 1/[N/2]$ in \cite{FuJi2020b,FuJi2020c}. Here, $[N/2]$ denotes the maximal integer less or equal to $N/2$. 

To complete this overview of the existing literature on \eqref{prlmod0}, we mention that weak solutions are constructed in \cite{TW2017, LiJi2020, DKTY2019, BLT2020, DLTW2021}. Several studies have also been devoted to \eqref{prlmod0} when a logistic growth is included. The latter actually alters significantly the mathematical properties of \eqref{prlmod0} and its dissipative effect fosters boundedness of classical solutions. We refer to \cite{JKW2018,FuJi2020,WW2019} for studies in this direction.

\medskip

In contrast to its two-component counterpart~\eqref{prlmod0}, the three-component system~\eqref{smod0} has received little attention. As far as we know, global existence and boundedness of classical solutions to \eqref{smod0} are established in \cite{JSW2020} when $N=2$ and  $\gamma$ is  a bounded Lipschitz continuous function which is bounded from  below by a positive constant. Still in the two-dimensional case, global existence of classical solutions is subsequently shown in \cite{LyWa2021} when the motility $\gamma$ is a positive and decreasing function with a vanishing limit at infinity. Uniform-in-time boundedness of these solutions is then obtained provided $1/\gamma$ grows at most algebraically at infinity, a result which is consistent with what is known for \eqref{prlmod0} \cite{FuJi2020c}.
	
The aim of this paper is a thorough study of the well-posedness of \eqref{smod0} in arbitrary space dimensions, along with that of the uniform-in-time boundedness of its solutions, which not only provides a far-reaching improvement upon \cite{JSW2020,LyWa2021} but also on \cite{FuJi2020c,FuSe2021}. More precisely, on the one hand, we show the existence and uniqueness of classical solutions to \eqref{smod0} for any motility function $\gamma$ which is positive and non-increasing with a vanishing limit at infinity in any space dimension. On the other hand, uniform-in-time boundedness of classical solutions to \eqref{smod0} is obtained for motility functions decaying slower than any negative exponential at infinity in the two-dimensional case and for motility functions satisfying  $\gamma(s) \sim s^{-k}$  for an arbitrary $k<N/(N-2)$ in higher space dimension. While the former extends \cite{LyWa2021} to motility functions such as $\gamma(s) = e^{-s^\alpha}$, $\alpha\in (0,1)$, the latter improves \cite{FuJi2020c} after setting $\theta=0$ in \eqref{smod0}, since $1/[N/2]<N/(N-2)$. Moreover, when $1/\gamma$ looks like a sublinear function, we proceed partly along the lines of \cite{DLTW2021} to construct a Lyapunov functional for \eqref{smod0} which allows us to identify the long-term behavior of classical solutions to \eqref{smod0}.
 
 Before describing precisely the outcome of this paper, we recall that two different approaches have been developed in the literature to tackle the degeneracy issue while studying the existence of classical solutions to \eqref{prlmod0}. The first method relies on the derivation of a $L^\infty_tL^p_x$-estimate for $u$ for some $p>N/2$ by energy and duality methods, which gives rise to an $L^\infty_{t,x}$-estimate for $v$, the latter being deduced from the equation for $v$ due to standard regularity theory for parabolic/elliptic equations. However, this method needs restrictive assumptions on $\gamma$, see, e.g., \cite{TW2017, AhnYoon2019, Wang20,JW2020}. The other approach is based on  a two-step comparison argument proposed in \cite{FuJi2020,FuJi2020c}. The key ingredient lies in the introduction of an intermediate auxiliary function $w$ which is the solution to a linear elliptic equation. Owing to the specific structure of the cell's equation, suitable applications of elliptic and parabolic comparison principles allow one to derive an upper bound on $w$, as well as a control from above on $v$ by $w$. An upper bound for $v$ thus  follows and subsequently leads to the global existence of classical solutions in any space dimension for rather generic motility functions \cite{JiLau2021a,FuSe2021}. This approach also turns out to be particularly efficient to investigate the uniform-in-time boundedness issue but requires a more refined argument, as developed in \cite{FuJi2020b,Jiang2020,JiLau2021a}. In fact, an important intermediate step is the derivation of an evolution equation for $w$ with a source term growing sublinearly with $w$. The analysis performed in this paper is actually based on a further development of this second method.

\medskip

Coming back to \eqref{smod0}, we focus here on the global well-posedness of \eqref{smod0} for a general class of motility functions $\gamma$ and consumption rates $f$, as well as on the uniform-in-time boundedness of its solutions. In addition, we shall investigate the long-term behavior of its solutions for a suitable class of motility functions $\gamma$. To reduce the number of parameters in \eqref{smod0}, we first perform the  rescaling
\begin{align*}
	& \bar{t} = \tau t\,, \quad \bar{x} = x/L\,, \quad (u,v,n)(t,x) = (\bar{u},H \bar{v},M \bar{n})(\bar{t},\bar{x})\,, \\
	& \bar{\gamma}(s) = \gamma(Hs)/(\tau L^2)\,, \quad \bar{f}(s) = \theta f(Ms)/\tau\,,
\end{align*}
with 
\begin{equation*}
	\tau = D_n/D_v\,, \quad L = \sqrt{D_v}\,, \quad H = \alpha\,, \quad M = k_s\,.
\end{equation*}
Dropping the bars, the system \eqref{smod0} becomes
\begin{subequations}\label{cp}
	\begin{align}
		& \partial_t u  = \Delta (u \gamma(v)) + u f(n)\,, \qquad (t,x)\in (0,\infty)\times\Omega\,, \label{cp1} \\
		&	\tau \partial_t v=\Delta v - \beta v  +  u\,, \qquad (t,x)\in (0,\infty)\times\Omega\,, \label{cp2} \\
		& \partial_t n= \Delta n - u f(n) \,, \qquad (t,x)\in (0,\infty)\times\Omega\,, \label{cpn} \\
		& \nabla (u\gamma(v)) \cdot \nu = \nabla v \cdot \nu =\nabla n \cdot \nu = 0\,, \qquad (t,x)\in (0,\infty)\times\partial\Omega\,, \label{cp3} \\
		& (u,v,n)(0)  = \left(u^{in},v^{in}, n^{in}\right)\,, \qquad x\in\Omega\,, \label{cp4}
	\end{align}
\end{subequations}
which only involves two positive parameters $\tau>0$ and $\beta>0$. Here,  $\Omega$ is a bounded domain of $\mathbb{R}^N$ ($N\ge 1$) with smooth boundary. The motion of cells is biased by the local concentration $v$ of chemotactic signal and is prescribed by the motility $\gamma(v)$ of cells, which is a positive function of $v$. Recall that, due to the density-suppressed effect, $\gamma$ is a non-increasing function on $(0,\infty)$. The function $f$ is non-negative and represents the consumption rate of nutrients by cells, which generalizes the particular form given in \eqref{smod0}.

\medskip

To begin with, we introduce some basic assumptions and notations. Throughout this paper we use the short notation $\|\cdot\|_{p}$ for the norm $\|\cdot\|_{L^p(\Omega)}$ with  $p\in[1,\infty]$. For the initial condition $(u^{in}, v^{in}, n^{in})$, we require that
\begin{equation}\label{ini}
	\begin{split}
& \left( u^{in},v^{in},n^{in} \right)\in   W^{1,N+1}(\Omega;\mathbb{R}^3)\,, \quad u^{in}\not\equiv 0\,, \\
& u^{in}\ge 0\,, \quad n^{in}\ge 0\,, \quad v^{in}>0 \quad  \mbox{in } \bar\Omega\,.
	\end{split}
\end{equation}
The motility function $\gamma$ and consumption rate $f$ are assumed to satisfy
\begin{equation}
	\gamma\in C^3((0,\infty))\,, \quad \gamma>0\,, \quad \gamma'\leq 0 \;\;\text{ in  }\;\; (0,\infty)\,, \quad\lim\limits_{s\rightarrow\infty}\gamma(s)=0\,, \tag{A1} \label{g1}
\end{equation}
and
\begin{equation}\label{asf}
	f\in C^{1}([0,\infty))\,, \quad f(0)=0\;\; \text{ and }\;\; f\ge 0\;\;\text{ on }\;\; (0,\infty)\,,
\end{equation}
respectively. Clearly, $f\equiv 0$ satisfies \eqref{asf}, so that the results obtained below equally apply to the two-component system~\eqref{prlmod0}.

We are now in a position to state our first main result concerning global existence of classical solutions to \eqref{cp}.

\begin{theorem}\label{TH1} Let $N\ge 1$. Suppose that $\gamma$ and $f$ satisfy assumptions~\eqref{g1} and~\eqref{asf}, respectively, and that the initial condition $(u^{in}, v^{in}, n^{in})$ satisfies \eqref{ini}. Then problem~\eqref{cp} has a unique global non-negative classical solution $(u,v,n)\in C([0,\infty)\times \bar{\Omega};\mathbb{R}^3) \cap C^{1,2}((0,\infty)\times \bar{\Omega};\mathbb{R}^3)$.
\end{theorem}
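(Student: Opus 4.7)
\emph{Step 1 (Local existence and positivity).} The plan follows the two-step comparison approach of \cite{FuJi2020, FuJi2020c}, refined in \cite{JiLau2021a, FuSe2021}, adapted to the three-component setting. The hypothesis $v^{in}>0$ on $\bar\Omega$ ensures that $\gamma(v^{in})$ is bounded below by a positive constant, so \eqref{cp1} is uniformly parabolic near $t=0$, and Amann's theory for triangular quasilinear parabolic systems applied in $W^{1,N+1}$ produces a unique maximal classical solution on $[0,T_{\max})\times\bar\Omega$ together with a blow-up criterion in $W^{1,N+1}$. The comparison principle applied componentwise gives $u,n\ge 0$, and the strict positivity $v>0$ follows from $v^{in}>0$ combined with $u\ge 0$ as a source in \eqref{cp2}.

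\emph{Step 2 (Elementary a priori bounds).} Since $u\ge 0$ and $f\ge 0$, the scalar maximum principle applied to \eqref{cpn} yields $0 \le n \le \|n^{in}\|_\infty$ on $[0,T_{\max})\times\bar\Omega$, so that $f(n)$ is uniformly bounded by some constant $M_f$. Integrating \eqref{cp1} over $\Omega$ and using the Neumann condition \eqref{cp3} to discard the divergence term gives
\begin{equation*}
\frac{d}{dt}\int_\Omega u\,dx = \int_\Omega u f(n)\,dx \leq M_f \int_\Omega u\,dx,
\end{equation*}
whence $\|u(t)\|_1 \le \|u^{in}\|_1 e^{M_f t}$, which is locally bounded on $[0,T_{\max})$.

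\emph{Step 3 (Upper bound on $v$ — the main obstacle).} This is the heart of the proof. The plan is to construct an auxiliary function $w$ as the solution of a suitable linear parabolic problem whose right-hand side can be estimated in terms of $\|u\|_1$, and to show, using the monotonicity $\gamma'\le 0$ and the structural identity
\begin{equation*}
\Delta(u\gamma(v)) = \mathrm{div}\bigl(\gamma(v)\nabla u + u\gamma'(v)\nabla v\bigr),
\end{equation*}
that $v \le w$ pointwise through a parabolic comparison argument exploiting the specific form of \eqref{cp1}--\eqref{cp2}. Bounding $w$ in $L^\infty$ in terms of $\|u(t)\|_1$ and $\int_0^t\|u(s)\|_1\,ds$, both of which are controlled by Step 2, yields the desired local-in-time $L^\infty$ bound on $v$. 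The novelty relative to the two-component system~\eqref{prlmod0} is the additional source $uf(n)$ in~\eqref{cp1}, but boundedness of $f(n)$ reduces this contribution to a controlled lower-order perturbation that is compatible with the comparison structure.

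\emph{Step 4 (Regularity, uniqueness, global extension).} Once $\|v(t)\|_\infty$ is locally bounded on $[0,T_{\max})$, $\gamma(v)$ is bounded below by a positive constant on each compact sub-interval, so that \eqref{cp1} is uniformly parabolic with bounded coefficients. Standard parabolic Hölder estimates, followed by Schauder theory, then upgrade $(u,v,n)$ to the announced $C^{1,2}$ regularity and control every norm appearing in the blow-up criterion of Step 1, forcing $T_{\max}=\infty$. Finally, uniqueness is obtained by a Gronwall-type estimate on the difference of two classical solutions, using the local Lipschitz continuity of $\gamma$ and $f$ on the a priori ranges delivered by the previous steps.
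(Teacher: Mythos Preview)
Your outline follows the right general strategy, but there are two genuine gaps that the paper handles with substantially more care.

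\medskip

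\textbf{Step~3.} Your description of the auxiliary function is imprecise in a way that matters. In the comparison method you cite, the auxiliary function is \emph{not} the solution of a linear parabolic problem with right-hand side controlled by $\|u\|_1$; in dimension $N\ge 2$ the $L^1\!\to\!L^\infty$ smoothing of the heat semigroup is not time-integrable, so such a construction would not yield an $L^\infty$ bound. The correct object is the \emph{elliptic} inverse $w=\mathcal{A}^{-1}[u]$ (or $S=\mathcal{A}^{-1}[u+n]$), where $\mathcal{A}=-\Delta+\beta$. Applying $\mathcal{A}^{-1}$ to~\eqref{cp1} produces the key identity $\partial_t w + u\gamma(v) = \mathcal{A}^{-1}[\beta u\gamma(v)+uf(n)]$, and the elliptic comparison principle together with $\gamma\le\gamma(v_*)$ gives the \emph{pointwise} differential inequality $\partial_t w\le Cw$, hence $w(t,x)\le C e^{Ct}$. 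The second step---upgrading this to an upper bound on $v$---requires a parabolic comparison involving the antiderivative $\Gamma(s)=\int_1^s\gamma$: one shows $\mathcal{L}v\le \mathcal{L}(S+\tau\Gamma(v)+K)$ for $\mathcal{L}=\tau\partial_t+\mathcal{A}$, and then absorbs $\tau\Gamma(v)$ into the left-hand side using $\Gamma(s)\le \gamma(\rho)s+C(\rho)$ and $\gamma(\rho)\to 0$. Without the $\Gamma$-mechanism and the vanishing limit of $\gamma$, the comparison is circular.

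\medskip

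\textbf{Step~4.} Bounding $v$ in $L^\infty$ does \emph{not} reduce the problem to ``standard parabolic H\"older estimates followed by Schauder''. The $u$-equation in divergence form reads $\partial_t u=\mathrm{div}(\gamma(v)\nabla u+u\gamma'(v)\nabla v)+uf(n)$; applying De Giorgi--Nash--Moser or Schauder to this requires at least boundedness of the drift $u\gamma'(v)\nabla v$, hence of $\nabla v$, which you do not yet have. The paper closes this loop indirectly: first derive H\"older continuity of $w$ via a local energy estimate \`a la Lady\v{z}enskaja--Solonnikov--Ural'ceva (using that $w$ satisfies $\partial_t w-\gamma(v)\Delta w=$ bounded terms), then H\"older continuity of $v$ via the linear equation for $\mathcal{A}^{-1}[v]$, then a $W^{2\theta,p}$-bound on $w$ via Amann's parabolic evolution operators, yielding a $W^{1,\infty}$-bound on $v$. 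Only at that stage does a direct $L^q$-energy estimate on $u$ become available, from which the $L^\infty$ bound and the blow-up criterion follow. Your Step~4 as written skips all of this and would not close.
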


 Observe that \eqref{g1} does not require $\gamma$ to be bounded as $s\to 0$, so that our analysis includes in particular $\gamma(s)=s^{-k}$ for $k>0$. This feature is actually not surprising in view of \eqref{e00} below, which states that $v$ has a time-independent  positive lower bound $v_*$, which is also independent of the choice of $\gamma$. Also, since no growth condition is required on $\gamma$ in Theorem~\ref{TH1}, a noticeable outcome of Theorem~\ref{TH0} is that the density-suppressed motility plays a fundamental role in preventing finite time blowup, which is in sharp contrast with the classical Keller-Segel system.
 
\medskip

We next investigate the boundedness of classical solutions to \eqref{cp} under certain decay assumptions of $\gamma$ at infinity and begin with the two-dimensional case $N=2$.

\begin{theorem}\label{TH0}
Assume $N=2$ and consider $f$ satisfying \eqref{asf}. Suppose that $\gamma$ satisfies assumption~\eqref{g1} and that there is $\chi>0$ such that
\begin{equation}\label{A2b}
	\liminf\limits_{s\rightarrow\infty}e^{\chi s}\gamma(s)>0\,. \tag{A2$\mathrm{e}_\chi$}
\end{equation}
If the initial condition $(u^{in},v^{in},n^{in})$ satisfies \eqref{ini} with 
\begin{equation}
	\|u^{in}+n^{in}\|_{1}<\frac{4\pi}{\chi}\,, \label{massini}
\end{equation}
then the global classical solution $(u,v,n)$ to \eqref{cp} is uniformly-in-time bounded; that is,
\begin{equation}
	\sup_{t\ge 0}\left\{ \|u(t)\|_\infty + \|v(t)\|_{\infty} + \|n(t)\|_\infty \right\} < \infty\,, \label{ubd}
\end{equation}
and, for any $t_0>0$,
\begin{equation}
	\sup_{t\ge t_0}\left\{ \|u(t)\|_{C^1(\bar{\Omega})} + \|v(t)\|_{C^1(\bar{\Omega})} + \|n(t)\|_{C^1(\bar{\Omega})} \right\} < \infty\,. \label{lipbd}
\end{equation} 

In particular, if $\gamma$ satisfies \eqref{g1} and \eqref{A2b} for all $\chi>0$, then the global classical solution to \eqref{cp} is uniformly-in-time bounded for any initial condition $(u^{in},v^{in},n^{in})$ satisfying \eqref{ini}.
\end{theorem}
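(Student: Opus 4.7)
The plan is to combine conservation of the total mass $\|u(t)+n(t)\|_1$ with the two-step comparison argument developed in \cite{FuJi2020,FuJi2020c,JiLau2021a} for the two-component system, and to close the resulting estimate via the sharp two-dimensional Moser--Trudinger inequality, whose critical constant is precisely what produces the threshold $4\pi/\chi$ appearing in~\eqref{massini}.

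First I would establish the basic \emph{a priori} bounds. Adding \eqref{cp1} and \eqref{cpn} and integrating over $\Omega$ gives $\tfrac{d}{dt}\int_\Omega (u+n)\,\mathrm{d}x = 0$, so
\[
\|u(t)\|_1 \;\le\; \|u(t)+n(t)\|_1 \;=\; \|u^{in}+n^{in}\|_1 \;<\; \frac{4\pi}{\chi}, \qquad t\ge 0,
\]
by~\eqref{massini}. Since $f,u\ge 0$, the maximum principle applied to~\eqref{cpn} yields $0\le n(t)\le \|n^{in}\|_\infty$, and a standard comparison applied to~\eqref{cp2} provides a uniform positive lower bound $v\ge v_*>0$, independent of the choice of $\gamma$.

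The core step is to transfer the subcritical mass bound into a uniform upper bound on $v$. Following the strategy recalled in the introduction, I would introduce an auxiliary function $w$ associated with $u$ via a linear elliptic Neumann problem of the type $-\Delta w+\beta w=u(t,\cdot)$ in $\Omega$, $\partial_\nu w=0$ on $\partial\Omega$, and use elliptic/parabolic comparison (with an exponentially dissipating transient when $\tau>0$) to control $v$ by $w$. The key analytical input coming from~\eqref{A2b} is the pointwise bound $\gamma(s)\ge c_0\,e^{-\chi s}$ valid for large $s$, which I would use to derive an evolution inequality of the form
\[
\frac{\mathrm{d}}{\mathrm{d}t}\int_\Omega e^{\chi w}\,\mathrm{d}x \;\le\; -\chi^2\int_\Omega e^{\chi w}|\nabla w|^2\,\mathrm{d}x \;+\; \chi\int_\Omega u\,e^{\chi w}\,\mathrm{d}x \;+\; \text{l.o.t.},
\]
in which the source grows essentially sublinearly in $w$. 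The 2D Moser--Trudinger inequality, in the sharp form
\[
\int_\Omega e^{\chi w}\,\mathrm{d}x \;\le\; C\exp\Bigl(\tfrac{\chi^2}{16\pi}\|\nabla w\|_2^2 + C(\bar w+1)\Bigr),
\]
combined with a duality/convex-inequality argument against $\|u\|_1<4\pi/\chi$, guarantees that the dissipative term dominates the source, and produces the uniform-in-time bound $\sup_{t\ge 0}\int_\Omega e^{\chi v(t)}\,\mathrm{d}x<\infty$. In particular $\gamma(v)^{-1}\in L^\infty_t L^p_x$ for every finite $p$.

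From this integrability, an $L^\infty$ bound on $u$ is obtained by a Moser-type iteration applied to the divergence-form equation $\partial_t u=\Delta(u\gamma(v))+uf(n)$, the reaction term $uf(n)$ being treated as a bounded perturbation in view of $n\in L^\infty$ and $f\in C^1$. Standard $L^p$--$L^q$ parabolic regularity for~\eqref{cp2} then promotes this into $\sup_t\|v(t)\|_\infty<\infty$, completing~\eqref{ubd}, and parabolic Schauder estimates deliver the $C^1$ control~\eqref{lipbd} on $[t_0,\infty)$ for every $t_0>0$. The final assertion, that boundedness holds for arbitrary initial data whenever~\eqref{A2b} is satisfied for every $\chi>0$, follows from the main statement by picking $\chi$ small enough to render~\eqref{massini} vacuous. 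The principal obstacle I expect is the sharp matching of constants: arranging the comparison so that the source in the evolution of $w$ fits inside the Moser--Trudinger threshold $4\pi/\chi$ exactly, while absorbing both the transient $\partial_t w$ discrepancy (when $\tau>0$) and the nutrient-induced contribution of $uf(n)$ into the dissipation without any loss of the critical margin.
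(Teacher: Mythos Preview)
Your overall framework is right---introduce the elliptic lift, control $v$ by it via comparison, and exploit the subcritical mass $<4\pi/\chi$ in two dimensions---but the mechanism you propose for the crucial step is both harder and less precise than what the paper actually does, and the displayed evolution inequality is not correct as written.

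The paper does \emph{not} run an evolution inequality for $\int_\Omega e^{\chi w}\,\mathrm{d}x$ and close it via Moser--Trudinger. Instead, it applies a Brezis--Merle type \emph{elliptic} estimate (Lemma~\ref{lm2e}) directly to $S\triangleq\mathcal{A}^{-1}[u+n]$: since $-\Delta S+\beta S=u+n$ with $\|u(t)+n(t)\|_1\equiv\|u^{in}+n^{in}\|_1<4\pi/\chi$ for all $t$, the lemma yields $\sup_{t\ge 0}\int_\Omega e^{RS(t)}\,\mathrm{d}x\le C$ for any $R<4\pi/\|u^{in}+n^{in}\|_1$, with no time-evolution argument whatsoever. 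The comparison $v\le(1+\varepsilon)S+L_1(\varepsilon)$ (Proposition~\ref{vbd1}) then transfers this to a uniform bound on $\int_\Omega e^{\chi(1+2\varepsilon)S}$ and hence on $\int_\Omega \gamma(v)^{-p}$ for every finite $p$. Note that here it is essential to work with $S=\mA^{-1}[u+n]$ rather than $w=\mA^{-1}[u]$, because only $\|u+n\|_1$ is conserved; you set up the conservation law correctly but then switch to $w$.

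With that in hand, the paper multiplies the key identity $\partial_t w+u\gamma(v)=\mA^{-1}[\beta u\gamma(v)+uf(n)]$ by $u=\mA w$, producing an energy inequality for $\|w\|_{H^1}^2$ whose only bad term is $C\int_\Omega w^2/\gamma(v)$; the exponential bound just derived renders this term uniformly bounded (Lemma~\ref{wH1a}). The resulting dissipation control $\int_t^{t+1}\!\int_\Omega u^2\gamma(v)\le C$ gives $\int_t^{t+1}\|w(s)\|_\infty\,\mathrm{d}s\le C$ via Sobolev embedding and H\"older, and the pointwise inequality $\partial_t w\le(\beta\gamma^*+f^*)w$ upgrades this to $\sup_t\|w\|_\infty<\infty$. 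The bound on $v$ follows from $v\le Bw$, and the rest proceeds as in the proof of Theorem~\ref{TH2}.

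Your displayed inequality is problematic: differentiating $\int_\Omega e^{\chi w}$ using $\partial_t w=-\gamma(v)\mA w+(\text{nonlocal})$ produces a dissipation $\chi^2\int_\Omega\gamma(v)e^{\chi w}|\nabla w|^2$ carrying the \emph{degenerate} weight $\gamma(v)$, an additional cross term $\chi\int_\Omega\gamma'(v)e^{\chi w}\nabla v\cdot\nabla w$, and a source coming from the nonlocal part (pointwise $\le Cw$), not $\chi\int_\Omega u\,e^{\chi w}$. Attempting to recover a non-degenerate dissipation via $\gamma(v)\ge c_0e^{-\chi v}$ and $v\le(1+\varepsilon)w+C(\varepsilon)$ leaves a residual factor $e^{-\chi\varepsilon w}$ in front of $|\nabla w|^2$, so the sharp matching of constants you rightly identify as the main obstacle is not achieved along this route. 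The elliptic Brezis--Merle argument sidesteps all of this.
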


A similar result is obtained in \cite{LyWa2021} under the stronger assumption that $1/\gamma$ decays algebraically at infinity. Theorem~\ref{TH0} shows that this assumption can be relaxed and applies in particular to $\gamma(s)=e^{-\beta s^{\theta}}$ with any $\beta>0$ and $\theta\in(0,1)$.
	
\begin{remark}\label{2Doptimal}
	The boundedness results in Theorem~\ref{TH0} are optimal. Indeed, when $\gamma(s)=e^{-\chi s}$ and $f\equiv0$ (so that \eqref{cp} reduces to \eqref{prlmod0}), a critical mass phenomenon is observed in \cite{FuJi2020c,JW2020}. Uniform-in-time boundedness is obtained when the total mass of cells is less than the critical value $4\pi/\chi$.  Moreover, finite-time blowup is excluded in \cite{FuJi2020c,BLT2020} by different methods and initial conditions having a sufficiently large total mass of cells which lead to unbounded solutions to \eqref{cp} (with $f\equiv 0$) are constructed in \cite{FuJi2020c,JW2020}.
\end{remark}
 
 \medskip
 
 We turn to higher space dimensions $N\ge 3$ and first point out that the assumption~\eqref{A2b} is not sufficient to guarantee uniform-in-time boundedness of global classical solutions. Indeed, it is shown in \cite{FuSe2021} that, for an arbitrarily given $m>0$, there always exist radially symmetric initial conditions $(u^{in},v^{in})$ with $m=\|u^{in}\|_1$ in the ball $\Omega = B_R(0)$ such that the corresponding global classical solution to \eqref{cp} with $\gamma(s)=e^{-s}$ and $f\equiv 0$ is unbounded. It is thus likely that uniform-in-time boundedness requires more restrictive growth conditions on $1/\gamma$ at infinity. Specifically, the following algebraic growth assumption is requested on $1/\gamma$:
\begin{equation}\label{gamma2}
\text{there are $k\geq l\geq0$ such that}\;\;\liminf\limits_{s\rightarrow\infty}s^{k}\gamma(s)>0\;\;\text{and}\;\;
\limsup\limits_{s\rightarrow\infty}s^{l}\gamma(s)<\infty. \tag{A2}
\end{equation}

We assume in addition that
\begin{equation}\label{A3}
	\begin{split}
	&\text{there is}\;  b_0\in (0,1] \;\text{such that, for any $s\ge s_0>0$,} \\
	& \hspace{2cm} s\gamma(s)+(b_0-1) \int_1^s\gamma(\eta)\mathrm{d}\eta\leq K_0(s_0) \,, \\
	&\text{where $K_0(s_0)>0$ depends only on $\gamma$, $b_0$, and $s_0$}\,.
	\end{split}\tag{A3}
\end{equation}

A suitable choice of the exponents $k$ and $l$ in \eqref{gamma2} leads us to the boundedness of global solutions to \eqref{cp} in higher dimensions, which we state now.

\begin{theorem}\label{TH2}
 Let $N\ge 3$ and consider $f$ satisfying \eqref{asf}. Suppose that $\gamma$ satisfies assumption~\eqref{g1} and
 assumption~\eqref{gamma2} for some $k\geq l\geq 0$ satisfying 
\begin{equation*}
 	k < \frac{N}{N-2} \;\;\text{and }\;\; k-l < \frac{2}{N-2}\,.
\end{equation*}
Assume further that, either $\gamma$ satisfies assumption~\eqref{A3}, or the parameter $l$ in \eqref{gamma2} is such that $l>\frac{(N-4)_+}{N-2}$.  Then, for any initial condition $(u^{in},v^{in},n^{in})$ satisfying  \eqref{ini}, the  global classical solution $(u,v,n)$  to \eqref{cp} is uniformly-in-time bounded in the sense that it satisfies \eqref{ubd} and \eqref{lipbd}.
\end{theorem}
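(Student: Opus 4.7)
The plan is to follow the two-step comparison argument developed in \cite{FuJi2020b,JiLau2021a}, extending it to accommodate the three-component coupling in \eqref{cp} and squeezing out the weaker hypotheses \eqref{gamma2}--\eqref{A3}. Theorem~\ref{TH1} already provides a global classical solution, so the task reduces to deriving time-independent a priori estimates. As preliminary reductions, nonnegativity together with $f\ge 0$ applied to \eqref{cpn} gives $0\le n \le \|n^{in}\|_\infty$, while integrating \eqref{cp1} and \eqref{cpn} yields the total mass identity $\|u(t)\|_1+\|n(t)\|_1\equiv \|u^{in}\|_1+\|n^{in}\|_1$. A time-independent pointwise lower bound $v(t,x)\ge v_\ast>0$ is also available from the Duhamel representation for \eqref{cp2} combined with $u\ge 0$ and $u^{in}\not\equiv 0$.

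The central device is an auxiliary function $w$ associated with \eqref{cp2}. Following the framework of \cite{FuJi2020,JiLau2021a}, I would take $w$ as the solution to a suitable linear elliptic problem driven by $u$ with Neumann boundary conditions, so that a parabolic comparison principle applied to the antiderivative $G(v) = \int_1^v \gamma(\eta)\,\mathrm{d}\eta$ (whose evolution equation follows from multiplying \eqref{cp2} by $\gamma(v)$) yields a pointwise control of $G(v)$ by a functional of $w$. Monotonicity of $\gamma$ from \eqref{g1} is what makes this comparison viable. Since $G$ is increasing, this translates into a pointwise upper bound on $v$, and hence into a pointwise lower bound on $\gamma(v)$ via \eqref{gamma2}. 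The extra source $u f(n)$ in \eqref{cp1} is harmless at this stage because $\|f'\|_{L^\infty(0,\|n^{in}\|_\infty)}\,n$ is uniformly bounded.

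Next, I would derive an evolution (or elliptic) inequality for the $L^p$-norm of $w$ whose right-hand side is sublinear in $\|w\|_p$, which is the decisive feature producing a uniform-in-time bound. This is precisely where the exponents of \eqref{gamma2} enter: via Gagliardo--Nirenberg interpolation applied to the coupling $u\gamma(v)$, the condition $k < N/(N-2)$ ensures that the nonlinear source is dominated by a power of $\|w\|_p$ strictly less than one, while $k-l < 2/(N-2)$ controls the loss incurred when the upper and lower algebraic bounds on $\gamma$ do not coincide. The dichotomy between \eqref{A3} and $l > (N-4)_+/(N-2)$ is used to absorb a correction term arising in the integration by parts for the $w$-estimate: \eqref{A3} provides a convexity-type inequality that absorbs this term directly, while the alternative hypothesis on $l$ yields a favorable Sobolev exponent that bypasses the need for \eqref{A3} (and is automatic when $N\in\{3,4\}$, since then $(N-4)_+=0$).

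The main obstacle, and the principal technical work, is establishing this sublinear evolution inequality in the three-component setting, since the extra term $u f(n)$ in \eqref{cp1} perturbs the cancellations that power the argument of \cite{JiLau2021a} in the two-component case. I would control this perturbation by combining the uniform bound on $n$ with the $L^1$-mass estimate to confine its contribution to lower-order terms that can be absorbed into the sublinear right-hand side. Once a uniform $L^\infty_t L^p_x$-bound on $w$, and hence on $v$, is obtained for some $p>N/2$, a standard Moser iteration applied to \eqref{cp1}, using $\gamma(v)\ge c(1+v)^{-k}$, yields \eqref{ubd}; the bound \eqref{lipbd} then follows from Schauder estimates applied successively to \eqref{cp2}, \eqref{cpn}, and \eqref{cp1}.
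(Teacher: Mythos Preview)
Your outline captures the broad architecture of the two-step comparison method, but it misses the decisive new ingredient and misidentifies where the dichotomy \eqref{A3}~vs.~$l>(N-4)_+/(N-2)$ actually enters.

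The issue is this: when $\tau>0$, the key identity for the auxiliary function (call it $w=\mathcal{A}^{-1}[u]$, or rather $S=\mathcal{A}^{-1}[u+n]$ in the three-component setting, since it is $u+n$ whose mass is conserved) reads
\[
\partial_t S - \gamma(v)\Delta S + \beta S\gamma(v) = \beta\mathcal{A}^{-1}[u\gamma(v)+n],
\]
and features both $S$ and $v$. To run a Moser iteration on $S$ alone, one must replace $\gamma(v)$ by $\gamma(cS)$ for suitable constants. Because $\gamma$ is non-increasing, an \emph{upper} bound $v\le BS$ gives $\gamma(v)\ge\gamma(BS)$ on the diffusion side, but to control the nonlocal source term $\mathcal{A}^{-1}[u\gamma(v)]$ from above by $\Gamma(\tilde S)+C$ one needs the \emph{reverse} inequality $v\ge AS$, so that $\gamma(v)\le\gamma(AS)$. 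Your proposal only describes the upper control $v\lesssim w$; the reverse control $S\lesssim v$ is the genuinely new step here, and it is obtained by applying the parabolic operator $\mathcal{L}$ to $\mathcal{A}^{-1}[\Gamma(v)]$ and then bounding $\|\mathcal{A}^{-1}[\Gamma(v)]\|_\infty$. It is \emph{this} bound that requires either \eqref{A3} (which gives $\Gamma(\tilde S)-\tilde S\gamma(\tilde S)\ge b_0\Gamma(\tilde S)-K_0$, whence an elliptic comparison yields $\mathcal{A}^{-1}[\Gamma(\tilde S)]\le C\Gamma(\tilde S)+C$) or $l>(N-4)_+/(N-2)$ (which makes $\Gamma(v)$ grow slowly enough that $\mathcal{A}^{-1}[\Gamma(v)]\in L^\infty$ via Sobolev embedding and the $L^1$ bound on $u$). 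Your account places the dichotomy inside an integration-by-parts correction during the $L^p$-estimate, which is not where it lives.

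A secondary gap: after the uniform bound on $v$ is obtained, you propose a ``standard Moser iteration applied to \eqref{cp1}'' to bound $u$. This does not close directly, because \eqref{cp1} written for $u$ involves $\nabla v$, which is not yet controlled. The paper instead first derives H\"older continuity of $v$ from a local energy estimate on $w$, then invokes Amann's parabolic evolution operator theory to obtain time-independent $W^{2\theta,p}$ bounds on $w$, which in turn yield $\nabla v\in L^\infty$; only then do $L^q$-energy estimates on $u$ close. You should not expect to skip this intermediate regularity step.
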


The assumption~\eqref{A3} is somewhat a technical one and is satisfied by several typical examples of motility functions $\gamma$, including $(a_1+s)^{-k_1}$, $(a_1+s)^{-k_1}\log^{-k_2} (a_2+s)$, and $(a_1+s)^{-k_1}+(a_2+s)^{-k_2}$, with  $a_i\ge 0$ and $k_i>0$ ($i=1,2$), which also satisfy \eqref{g1} and \eqref{A2b}. We emphasize that, when $\gamma$ satisfies \eqref{gamma2} with $l\geq 1$, then $\gamma(s)\le C s^{-l}$ for $s$ large enough, which in turn guarantees that $s\gamma(s)\le C$ and the validity of \eqref{A3} with $b_0=1$. We refer to Lemma~\ref{sufconA3} below for a more detailed discussion.

A direct consequence of Theorem~\ref{TH2} is that, when $N\ge 3$ and $\gamma(s)\sim s^{-k}$ as $s\rightarrow\infty$ for some  $0<k<N/(N-2)$, global classical solutions to \eqref{cp} are uniformly-in-time bounded. In view of what is already known in the two-dimensional case, the main question left open is the optimality of the exponent $N/(N-2)$ and it will be our future task to study whether the exponent $N/(N-2)$ is critical for boundedness of solutions.

\begin{remark}\label{RemTCS}
It is worth mentioning once more that, given non-negative functions $(u^{in},v^{in}, n^{in})\in W^{1,N+1}(\Omega;\mathbb{R}^3)$ such that $u^{in}\not \equiv 0$, $v^{in}>0$ in $\bar{\Omega}$ and $n^{in}\equiv 0$, the corresponding solution $(u,v,n)$ to \eqref{cp} satisfies $n\equiv 0$ due to $f(0)=0$ and $(u,v)$ is actually a solution to \eqref{prlmod0}. An interesting consequence of this property is that, when $\gamma$ satisfies \eqref{g1}, global existence of solutions to \eqref{prlmod0} follows from Theorem~\ref{TH1} and we thus provide an alternative proof of \cite[Theorem~1.1]{FuSe2021}. Assuming further that $\gamma$ satisfies the assumptions of Theorem~\ref{TH2}, we also deduce from Theorem~\ref{TH2} the uniform-in-time boundedness of classical solutions to \eqref{prlmod0}, thereby extending \cite{FuJi2020b, FuJi2020c}.
\end{remark}
\medskip

Our final result deals with the large time behavior of globally bounded solutions to \eqref{cp} when $1/\gamma$ grows at most linearly at infinity.

\begin{theorem}\label{TH3}
Suppose that $f$ satisfies \eqref{asf} with $f>0$ on $(0,\infty)$ and that the initial condition satisfies \eqref{ini}.  Assume that  $\gamma$ satisfies \eqref{g1} and 
\begin{equation}\label{g_1'}
	s\gamma'(s)+\gamma(s)\geq0\,, \qquad s\in (0,\infty)\,.
\end{equation}
If  $(u,v,n)$ is a non-negative classical solution to \eqref{cp} which is uniformly-in-time bounded, then 
\begin{equation*}
	\lim\limits_{t\rightarrow\infty} \left( \|u(t)-m\|_{\infty} + \|v(t)-m\|_{\infty} + \|n(t)\|_{\infty} \right) = 0\,,
\end{equation*}
where $m\triangleq \|u^{in}+n^{in}\|_1/|\Omega|$.

In addition, if $f$ satisfies
\begin{equation}\label{asf002}
		\liminf\limits_{s\rightarrow0+}f(s)/s>0,
\end{equation}
then there exist $\delta_0>0$ and $C>0$ depending on $\Omega$, $\gamma$, $f$,  and the initial data such that
\begin{equation*}
	\|u(t)-m\|_{\infty}+\|v(t)-m\|_{\infty}+\|n(t)\|_{\infty}\leq Ce^{-\delta_0 t}\qquad\text{for all}\;\; t\geq0.
\end{equation*}
\end{theorem}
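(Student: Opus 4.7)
The plan is to construct a Lyapunov functional for \eqref{cp}, patterned on the one used in \cite{DLTW2021} for the two-component system, and to combine its dissipation with parabolic compactness to pin down the asymptotic profile. The key role of~\eqref{g_1'} is that, since $(s\gamma(s))'\geq0$, a natural entropy-type quantity involving $\gamma(v)$ has a sign-definite time derivative: this identity is precisely what disposes of the otherwise indefinite cross terms produced when differentiating along the flow. As a preliminary, starting from the uniform bound~\eqref{ubd} and the Lipschitz estimate~\eqref{lipbd}, together with the time-independent lower bound $v\geq v_*>0$ which comes out of the proof of Theorem~\ref{TH1}, iterated parabolic Schauder estimates applied to \eqref{cp1}--\eqref{cpn} viewed as linear equations with bounded H\"older coefficients yield that the family $\{(u(\cdot+t),v(\cdot+t),n(\cdot+t))\mid t\geq1\}$ is relatively compact in $C^{2,1}([0,1]\times\bar\Omega;\RR^3)$. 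Hence every sequence $t_k\to\infty$ admits a subsequence along which the shifted trajectory converges in $C^{2,1}$ to an entire bounded classical solution $(U,V,N)$ of \eqref{cp}.

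For the Lyapunov functional, I would introduce one of the form
\begin{equation*}
\mathcal{L}(t)=\int_\Omega\Psi(u,v)\,\rd x+\frac{\tau}{2}\int_\Omega|\nabla v|^2\,\rd x+\lambda\int_\Omega n\,\rd x,
\end{equation*}
with a suitable relative-entropy density $\Psi(u,v)$ built from $\gamma$, and $\lambda>0$ chosen large enough to absorb the nutrient-driven source $uf(n)$ appearing in~\eqref{cp1}. Differentiating $\mathcal{L}$ along~\eqref{cp}, integrating by parts, and substituting $\Delta v=\tau\partial_tv+\beta v-u$ wherever $\Delta v$ arises, the hypothesis~\eqref{g_1'} turns the remaining indefinite cross terms non-positive and yields a dissipation inequality
\begin{equation*}
\frac{\mathrm{d}\mathcal{L}}{\mathrm{d}t}\leq-\mathcal{D}(t),\qquad \mathcal{D}(t)\geq c_1\|\nabla v(t)\|_2^2+c_2\int_\Omega u(t)f(n(t))\,\rd x.
\end{equation*}
Integrating in time and using a lower bound on $\mathcal{L}$ gives $\mathcal{D}\in L^1(0,\infty)$; combined with the compactness above this forces $\nabla V\equiv0$ and $Uf(N)\equiv0$ on every $\omega$-limit point $(U,V,N)$. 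Since $f>0$ on $(0,\infty)$, and because $u^{in}\not\equiv0$ while $t\mapsto\int_\Omega u$ is non-decreasing by the computation $\tfrac{\mathrm{d}}{\mathrm{d}t}\int u=\int uf(n)\geq0$, $U\equiv0$ is excluded, so $N\equiv0$. Then $V$ is spatially constant, the $v$-equation at equilibrium yields $U\equiv\beta V$ constant, and conservation of $\int_\Omega(u+n)$ fixes the common value to $m$, identifying the unique $\omega$-limit with $(m,m,0)$ as in the statement; uniqueness then upgrades subsequential convergence to full $L^\infty$-convergence.

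For the exponential rate, assume~\eqref{asf002}, so that $f(s)\geq\kappa s$ on $[0,s_1]$ for some $\kappa,s_1>0$. The preceding convergence provides $T_0$ with $n(t,\cdot)\leq s_1$ and $u(t,\cdot)\geq m/2$ on $[T_0,\infty)\times\bar\Omega$, whence \eqref{cpn} yields $\partial_tn-\Delta n\leq-\tfrac{\kappa m}{2}n$; comparison with a spatially constant super-solution then gives $\|n(t)\|_\infty\leq Ce^{-\delta_1 t}$. Feeding this decay back into the dissipation inequality above, and using mass conservation $\int_\Omega(u-m)=-\int_\Omega n$, which forces the spatial average of $u-m$ to decay exponentially, the Poincar\'e--Wirtinger inequality upgrades the gradient-type dissipation to a coercive bound $\mathcal{D}(t)\geq2\delta_0\mathcal{L}(t)-Ce^{-\delta_1 t}$. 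Gr\"onwall's lemma then delivers $\mathcal{L}(t)\leq Ce^{-\delta_0 t}$, and the claimed exponential $L^\infty$-decay follows by interpolation with the $C^{2,1}$-bounds of the first step.

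The main obstacle is the algebraic bookkeeping in the Lyapunov step: the density $\Psi$ must be calibrated so that the cross terms of the form $\gamma'(v)\nabla u\cdot\nabla v$, the contributions from $\tau\partial_tv$ surviving the substitution of the $v$-equation, and the $uf(n)$ source from~\eqref{cp1} all assemble into a sign-definite dissipation—this is precisely where hypothesis~\eqref{g_1'} is invoked, and miscalibrating $\Psi$ or $\lambda$ would leave uncontrolled remainders coupling the cell and signal equations. A secondary delicate point is extracting an exponential rate for $u-m$ and $v-m$ from a dissipation that only controls gradients and the nutrient-source integral, with no direct $L^2$-coercivity for $u-m$; this is handled by coupling to the independently established exponential decay of $\|n\|_\infty$ through conservation of the total mass of $u+n$.
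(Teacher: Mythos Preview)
Your overall architecture---Lyapunov functional, dissipation integrability, $\omega$-limit identification, then Gr\"onwall for the rate---matches the paper's. But the decisive step, the construction of the Lyapunov functional, is left as a black box, and the shape you propose is not the one that actually works.

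You posit a \emph{local} density $\Psi(u,v)$ so that $\mathcal{L}=\int_\Omega\Psi(u,v)+\tfrac{\tau}{2}\|\nabla v\|_2^2+\lambda\|n\|_1$. The paper's functional is \emph{non-local}: its leading term is $\tfrac12\|\nabla U\|_2^2$ where $-\Delta U=u-\langle u\rangle$ (an $H^{-1}$-type norm of $u$), to which one adds the specific combination
\[
2\int_\Omega\Gamma_1(v)-\langle u\rangle\int_\Omega\Gamma(v)-\gamma_1(\langle u\rangle)\|v\|_1+K_*\|n\|_1
+|\Omega|\big(\gamma_1(\langle u\rangle)\langle u\rangle+\langle u\rangle\Gamma(\langle u\rangle)-2\Gamma_1(\langle u\rangle)\big),
\]
with $\gamma_1(s)=s\gamma(s)$ and $\Gamma_1'=\gamma_1$. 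Testing the $u$-equation against $U$ produces $\int(\langle u\rangle-u)u\gamma(v)$, which after rewriting via the $v$-equation yields four non-negative dissipation terms: $D_1\gtrsim\|\nabla v\|_2^2$, $D_2=\int(v-\langle u\rangle)(\gamma_1(v)-\gamma_1(\langle u\rangle))$, $D_3=\int(u-v)^2\gamma(v)$, and $D_4=\|uf(n)\|_1$. Hypothesis~\eqref{g_1'} enters precisely to make $\gamma_1'\ge 0$, which gives both $D_1\ge 0$ (via $2\gamma_1'(v)-\langle u\rangle\gamma'(v)>0$) and $D_2\ge 0$. If you try instead to differentiate a local $\int\Psi(u,v)$ along $\partial_t u=\Delta(u\gamma(v))$, the cross terms $\int(\Psi_{uu}u\gamma'+\Psi_{uv}\gamma)\nabla u\cdot\nabla v$ do not cancel in any obvious way, and there is no indication that a local $\Psi$ with the required sign exists; you yourself flag this as ``the main obstacle'' without resolving it.

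A second, smaller point: your claimed dissipation captures only $\|\nabla v\|_2^2$ and $\int uf(n)$, whereas the paper crucially also extracts $D_3\gtrsim\|u-v\|_2^2$. This term is what drives the later differential inequalities for $\|\nabla v\|_2^2$ and $\|u-\langle u\rangle\|_2^2$, and without it your coercivity claim $\mathcal{D}\ge 2\delta_0\mathcal{L}-Ce^{-\delta_1 t}$ is unsupported: there is nothing in your $\mathcal{D}$ that controls the $\int\Psi(u,v)$ part of $\mathcal{L}$. (Your direct comparison argument for $\|n\|_\infty$ is fine and in fact slightly cleaner than the paper's route to the same bound, but it does not by itself close the loop for $u$ and $v$.) Also, your sentence ``the $v$-equation at equilibrium yields $U\equiv\beta V$'' skips a step: the $\omega$-limit is an \emph{entire} bounded solution, not a priori stationary, and one must argue that $\nabla V\equiv 0$ forces $U$ spatially constant, then $\partial_t U=0$ from the $u$-equation with $N\equiv 0$, and finally that the only bounded entire solution of $\tau V'+\beta V=U$ is the constant $V=U/\beta$.
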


A first consequence of Theorem~\ref{TH3} is that there is no pattern formation in the dynamics of \eqref{cp} when $\gamma$ satisfies \eqref{g1} and \eqref{g_1'}, as already observed in \cite{AhnYoon2019, DLTW2021, Jiang2020}. Since the latter somehow means that $\gamma(s)\sim C s^{-l}$ as $s\to\infty$ for some $l\in (0,1]$ and thus that $\gamma$ decays rather slowly at infinity, pattern formation in \eqref{cp} can only be triggered by a motility function which decreases sufficiently rapidly at infinity, an observation which fully complies with \cite{2011Science}. 

The proof of Theorem~\ref{TH3} is divided into two steps: we first show that the Lyapunov functional constructed in \cite{DLTW2021} for the two-component system~\eqref{prlmod0} can be modified in a suitable way (with terms involving $n$ in particular) to give rise to a Lyapunov functional for \eqref{cp}. It is worth pointing out that the Lyapunov functional constructed here (and in \cite{DLTW2021}) is different from that obtained in \cite{AhnYoon2019} when $\tau=0$. The second step is devoted to the derivation of a lower bound of the dissipation of the Lyapunov functional in terms of the Lyapunov functional itself, which eventually leads to the exponential decay.

\medskip

As a consequence of the above results, we have the following result for motility functions which are negative power laws.

\begin{proposition}
	Let $N\ge 2$ and assume that $\gamma(s)=s^{-k}$ for some $k>0$ and that $f$ satisfies \eqref{asf}. For any initial condition $(u^{in},v^{in},n^{in})$ satisfying \eqref{ini}, problem~\eqref{cp} has a global classical solution. Moreover, it is uniformly-in-time bounded when, either $k<\infty$ and $N=2$, or $k<N/(N-2)$ and $N\geq 3$. In addition, if $0<k\le 1$ and $f>0$ on $(0,\infty)$, then the global classical solution to \eqref{cp} converges to $(m,m,0)$ as time goes to infinity, and the convergence takes place at an exponential rate provided that $f$ satisfies \eqref{asf002}.
\end{proposition}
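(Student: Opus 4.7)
The proposition is a direct specialization of Theorems~\ref{TH1}, \ref{TH0}, \ref{TH2}, and \ref{TH3} to the power-law motility $\gamma(s)=s^{-k}$, $k>0$. The plan is simply to verify, in turn, that this $\gamma$ meets the hypotheses of each of these theorems in the stated regimes of $k$ and $N$, and then to invoke each theorem.

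For global existence I would check \eqref{g1}: clearly $\gamma\in C^3((0,\infty))$ is positive with $\gamma'(s)=-ks^{-k-1}<0$ and $\gamma(s)\to 0$ as $s\to\infty$, so that Theorem~\ref{TH1} yields a unique global non-negative classical solution. For boundedness when $N=2$, I would verify \eqref{A2b} for every $\chi>0$, which is immediate since $e^{\chi s}s^{-k}\to\infty$; the last statement of Theorem~\ref{TH0} then applies for any admissible initial datum. For $N\ge 3$ and $k<N/(N-2)$, I would verify \eqref{gamma2} with $l=k$ (both liminf and limsup of $s^{k}\gamma(s)$ equal $1$), so that the required inequalities $k<N/(N-2)$ and $k-l=0<2/(N-2)$ hold. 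The only nontrivial point is to check \eqref{A3}, which I would handle by splitting into three cases. If $k>1$, both $s\gamma(s)=s^{1-k}$ and $\int_1^s \eta^{-k}\rd\eta$ are bounded on $[1,\infty)$, so any $b_0\in(0,1]$ works. If $k=1$, one has $s\gamma(s)+(b_0-1)\int_1^s\eta^{-1}\rd\eta=1+(b_0-1)\log s$, which is bounded above for any $b_0\in(0,1]$. Finally, if $0<k<1$, a direct computation gives
\begin{equation*}
s\gamma(s)+(b_0-1)\int_1^s\eta^{-k}\rd\eta = \left[1+\frac{b_0-1}{1-k}\right] s^{1-k} + \frac{1-b_0}{1-k},
\end{equation*}
whose leading coefficient is non-positive precisely when $b_0\le k$; choosing $b_0=k\in(0,1)$ then yields the required estimate. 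Theorem~\ref{TH2} therefore provides uniform-in-time boundedness.

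For the long-time behavior with $0<k\le 1$ and $f>0$ on $(0,\infty)$, I would verify \eqref{g_1'} by the elementary computation $s\gamma'(s)+\gamma(s)=(1-k)s^{-k}\ge 0$. Since uniform-in-time boundedness has just been established for all such $k$ in both $N=2$ and $N\ge 3$, Theorem~\ref{TH3} gives $(u(t),v(t),n(t))\to(m,m,0)$ in $L^\infty(\Omega)$ as $t\to\infty$, together with the exponential rate under the extra assumption \eqref{asf002}. There is essentially no obstacle in the argument; the only step that is not a one-line verification is the case analysis for \eqref{A3}, and even this reduces to an elementary primitive computation.
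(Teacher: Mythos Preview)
Your proof is correct and follows exactly the approach the paper intends: the proposition is stated as a direct consequence of Theorems~\ref{TH1}, \ref{TH0}, \ref{TH2}, and \ref{TH3}, and your verification of the hypotheses \eqref{g1}, \eqref{A2b}, \eqref{gamma2}, \eqref{A3}, and \eqref{g_1'} for $\gamma(s)=s^{-k}$ is accurate in each regime. The only shortcut you could have taken is to cite Lemma~\ref{sufconA3}(b) or~(c) for \eqref{A3} (since $s\gamma'(s)+b_0\gamma(s)=(b_0-k)s^{-k}\le 0$ for $b_0=\min\{k,1\}$, or $A_l=B_l=1$ with $(1-k)\cdot 1<1$), but your direct case analysis is equally valid.
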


The main idea of our proof relies on an improved comparison method. In fact, we not only need a control of $v$ from above by a suitably defined auxiliary function via the two-step comparison argument originally developed in \cite{FuJi2020c}, but also require a reverse control of $v$ from below. The latter goal is achieved by a third step comparison argument developed in the present contribution. In order to illustrate our strategy more explicitly, we restrict ourselves to the case $f\equiv 0$, which actually corresponds to the simplified two-component system~\eqref{prlmod0}. Later, we will explain how to extend the argument to the more involved three-component case.

 To begin with, we introduce the auxiliary function $w=\mA^{-1}u$, where $\mA$ denotes the elliptic operator $-\Delta+\beta$ in $\Omega$ with homogeneous Neumann boundary conditions on $\partial\Omega$. Applying $\mA^{-1}$ to both sides of  \eqref{cp1} gives the following key identity:
\begin{equation}\label{keyid0}
	\partial_t w+u\gamma(v)= \beta\mA^{-1}[u\gamma(v)]\,,
\end{equation}
which unveils the intrinsic mechanism of the density-suppressed motility and the nonlinear coupling structure. In fact, thanks to the monotonicity of $\gamma$ and the elliptic comparison principle, one observes that
\begin{equation}\label{nonloc0}
	0 \le \mA^{-1}[u\gamma(v)]\leq \mA^{-1}[\gamma(v_*)u]=\gamma(v_*)w,
\end{equation}
where $v_*$ is a time-independent  positive lower bound for $v$ given in \eqref{e00} below. Integrating the key identity \eqref{keyid0} and using the non-negativity of $u\gamma(v)$ entail that
\begin{equation}\label{upcon0}
	0 \le w(t,x)\leq Ce^{\beta\gamma(v_*)t}\,, \qquad (t,x)\in [0,\infty)\times \Omega\,.
\end{equation}
We remark that, if $\tau=0$, then $w=v$ and the bound~\eqref{upcon0} actually shows the boundedness of $v$. However, when $\tau>0$, there holds $w = v + \tau\mA^{-1}[\partial_t v]$ and deriving a comparison between $w$ and $v$ is more involved and requires additional arguments. Specifically, introducing the parabolic operator $\ml  z= \tau \partial_t z - \Delta z + \beta z$ and a function $\Gamma$ satisfying $\Gamma'=\gamma$, the monotonicity of $\gamma$ and the key identity \eqref{keyid0} allow us to show by delicate calculations that $\ml v\leq \ml (w+\Gamma(v)+K)$ for some constant $K>0$. Then we may employ the parabolic comparison principle to get $v\leq w+\Gamma(v)+K$. Next, the monotonicity and the vanishing limit of $\gamma$ imply that $\Gamma(s)\leq \alpha s$ for all $s>0$ and some $0<\alpha<1$. As a result, we obtain an upper control of $v$ by $w$: 
\begin{equation}\label{upcontrol}
	v\leq \frac{w+K}{1-\alpha}\,,
\end{equation}
which, together with the previously obtained upper bound~\eqref{upcon0}, gives finally an upper bound for $v$.

The upper bound~\eqref{upcontrol} plays a key role in the previous studies performed in \cite{FuJi2020b,FuSe2021,FuJi2020c,JiLau2021a} concerning existence and boundedness in the two-component system~\eqref{prlmod0}. However, it does not provide the boundedness of classical solutions to \eqref{prlmod0} in higher dimensions $N\geq 3$ when $\tau>0$. To better highlight the difficulty to be overcome, let us recall that the strategy set up in \cite{JiLau2021a} to prove boundedness of classical solutions to \eqref{prlmod0} when $\tau=0$ is to regard the key identity~\eqref{keyid0} as a quasilinear parabolic equation for $v$ with a non-local source term. Indeed, since $w=v$ when $\tau=0$, an alternative form of \eqref{keyid0} is:
 \begin{equation}\label{keyid1}
	\partial_t v-\gamma(v)\Delta v+ \beta v\gamma(v)= \beta\mA^{-1}[u\gamma(v)].
\end{equation} 
Then,  using once more the elliptic comparison principle and the monotonicity of $\gamma$, one may manipulate the non-local term to derive the following inequality
\begin{equation}\label{comp1}
	\mA^{-1}[u\gamma(v)]\leq \Gamma(v)+C\,.
\end{equation}
At this stage, one may further notice that, in the special case $\gamma(s)\sim s^{-k}$ as $s\rightarrow\infty$, the function $\Gamma$ is controlled  at infinity by $C s^{(1-k)_+} $ when $k\neq1$, or by $Cs^{\varepsilon}$ for any $\varepsilon>0$ when $k=1$, see Lemma~\ref{upG} below. Based upon this key observation, a delicate iterative argument is applied to \eqref{keyid1} to establish a time-independent  upper bound for $v$ provided that $k<N/(N-2)$, see \cite{JiLau2021a}. In order to emphasize the need of \eqref{comp1}, we mention that a control of the non-local term $\mA^{-1}[u\gamma(v)]$  by a linear function of $v$ as done in \eqref{nonloc0} only allows one to derive a uniform-in-time upper bound on $v$ when, either $k<2$ and $N=3$, or $k\in(0, 1]\cap(0,4/(N-2))$ and $N\geq 4$, see \cite{Jiang2020}.

In contrast, when $\tau>0$, the situation becomes rather involved, as $w\ne v$ and the  alternative form of the key identity~\eqref{keyid0}, which  reads
\begin{equation}\label{keyid2}
	\partial_t w-\gamma(v)\Delta w + \beta w\gamma(v)  = \beta\mA^{-1}[u\gamma(v)]\,,
\end{equation}
in that case, features both $v$ and $w$. In order to derive a single equation for $w$, we aim at replacing $\gamma(v)$ by $\gamma(w)$ in the above identity. Owing to the monotonicity of $\gamma$ and the non-negativity of $-\Delta w + \beta w = u$, a two-sided control of $v$ by $w$ will do the job. For this purpose, we develop an additional argument to establish the reverse estimate $w\le C(v+1)$. Together with \eqref{upcontrol} and positive lower bounds on $v$ and $w$ which are derived independently, we finally arrive at the two-sided control $C_1 w\leq v\leq C_2 w$, which in turn implies that 
\begin{equation}
	u \gamma(C_2 w) \le u\gamma(v)\leq u\gamma(C_1 w)\,, \label{ucon}
\end{equation}
due to the monotonicity of $\gamma$. Recalling that $u = \mA[w]$, we deduce from \eqref{keyid2} and \eqref{ucon} that
\begin{equation}
	\partial_t w-\gamma(C_2w)\Delta w + \beta \gamma(C_2w)w \leq \beta\mA^{-1}[u\gamma(C_1w)]\,. \label{keyineq}
\end{equation}
Now, \eqref{keyineq} looks very much like \eqref{keyid1} and we may proceed as in the derivation of \eqref{comp1} to estimate the non-local term $\mA^{-1}[u\gamma(C_1w)]$ by $\Gamma(w)+C$. We are then able to carry out an iterative Moser technique to get a uniform-in-time upper bound for $w$ provided that $k<N/(N-2)$. Then a time-independent upper bound of $v$ follows as well.

Having obtained the uniform-in-time boundedness of $v$, we can further show that $w$ and $v$ are H\"{o}lder-continuous with respect to both $t$ and $x$ by establishing a local energy estimate, following a classical approach developed in \cite{LSU1968}. Thanks to this property, we can employ the theory developed by Amann in \cite{Aman1988, Aman1989, Aman1990, Aman1993, Aman1995} to find a representation formula for $w$, which involves a parabolic evolution operator having properties similar to an analytic semigroup. The estimates for the parabolic evolution operator given in \cite{Aman1995} enable us to derive the (time-independent) $W^{1,\infty}$-estimates of $w$ and $v$, which in turn give rise to the uniform-in-time boundedness of $u$.

Let us finally point out that a careful modification of the above mentioned arguments is needed when dealing with the three-component system~\eqref{cp}. In fact, since it is no longer the total mass of $u$ which is invariant throughout time evolution but that of $u+n$, a second auxiliary function $S \triangleq \mA^{-1}[u+n]$ is introduced. While only an upper bound on $v$ in terms of $S$ is needed to establish global existence as stated in Theorem~\ref{TH1}, the uniform-in-time boundedness reported in Theorem~\ref{TH2} requires more work and a two-sided control of $v$  by $S$ is in fact needed in the proof.

\medskip

The remainder of the paper is organized as follows.  In Section~\ref{sec2}, we provide some preliminary results and recall some useful lemmas. In Section~\ref{sec-comparison}, we revisit the comparison argument and develop an additional argument to derive the above mentioned two-sided control. In Section~\ref{gecs}, we study the global existence of classical solutions to \eqref{cp}. In Section~\ref{sec-2D}, we prove uniform-in-time boundedness of classical solutions in the two-dimensional case $N=2$, while the case of higher space dimension $N\ge 3$ is dealt with in Section~\ref{ubcs}. The last section is devoted to the large time behavior of bounded classical solutions to \eqref{cp} when $\gamma$ satisfies \eqref{g1} and \eqref{g_1'} and $f$ satisfies \eqref{asf} and \eqref{asf002}.

\section{Preliminaries}\label{sec2}
%
\newcounter{NumConst}

In this section, we recall some useful results. We begin with the existence of local classical solutions which mainly follows from the theory developed by Amann in \cite{Aman1988, Aman1989, Aman1990, Aman1993} and the comparison principle, along with positivity properties of the heat equation.

 \begin{theorem}\label{local}
	 Suppose that $\gamma$ satisfies \eqref{g1} and $(u^{in},v^{in},n^{in})$ satisfies \eqref{ini}. Then there exists $T_{\mathrm{max}} \in (0, \infty]$ such that problem~\eqref{cp} has a unique non-negative classical solution $(u,v,n)\in C([0,T_{\mathrm{max}})\times \bar{\Omega};\mathbb{R}^3)\cap C^{1,2}((0,T_{\mathrm{max}})\times \bar{\Omega};\mathbb{R}^3)$. The solution $(u,v,n)$ satisfies the mass conservation
	\begin{equation}
	\int_\Omega \left(u(t,x)+n(t,x)\right)\ \mathrm{d} x= \int_\Omega (u^{in}(x)+n^{in}(x))\ \mathrm{d} x
	\quad \text{for\ all}\ t \in (0,T_{\mathrm{max}})\,,\label{e0}
	\end{equation}
and 
\begin{equation}
	\|u^{in}+n^{in}\|_1 \ge \|u(t)\|_{1}\ge \|u^{in}\|_{1}>0\quad \text{for\ all}\ t \in (0,T_{\mathrm{max}}).\label{uL1}
\end{equation}		
Moreover, for any $1\leq p\leq \infty$, there holds
\begin{equation}
	\|n(t)\|_{p}\leq \|n^{in}\|_{p}\quad \text{for\ all}\ t \in (0,T_{\mathrm{max}}),\label{nup}
\end{equation}
and there is $v_*>0$ depending only on $\Omega$, $v^{in}$, and $\|u^{in}\|_1$ such that
\begin{equation}
	v(t,x) \ge v_*\,, \qquad (t,x)\in [0,T_{\mathrm{max}})\times \bar{\Omega}\,. \label{e00}
\end{equation}
Finally, if $T_{\mathrm{max}}<\infty$, then
	\begin{equation*}
	\limsup\limits_{t\nearrow T_{\mathrm{max}}}\|u(t)\|_{\infty}=\infty.
	\end{equation*}
\end{theorem}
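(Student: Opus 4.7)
The plan is to cast \eqref{cp} as a quasilinear parabolic system fitting into Amann's framework \cite{Aman1988,Aman1989,Aman1990,Aman1993}. After expanding the divergence in \eqref{cp1}, the triple $(u,v,n)$ solves an upper-triangular quasilinear system whose principal part is diagonal and normally elliptic as long as $\gamma(v)>0$, which is guaranteed by \eqref{g1} together with the strict positivity of $v^{in}$ in $\bar\Omega$. Since the coefficients inherit $C^3$ regularity from $\gamma$, Amann's local existence and uniqueness theorem will provide a unique maximal classical solution $(u,v,n)$ on $[0,T_{\mathrm{max}})\times\bar\Omega$, together with an abstract continuation principle in terms of a sufficiently strong Sobolev norm.

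Non-negativity and the basic integral identities will then follow from essentially standard arguments. The sign of $n$ will be deduced by rewriting \eqref{cpn} as a linear parabolic equation for $n$ with the Lipschitz continuous zeroth-order coefficient $uf(n)/n$ (extended continuously at zero by $uf'(0)$) and invoking the parabolic comparison principle, thanks to $f(0)=0$; once $n\ge 0$ is known, \eqref{cp1} becomes a linear equation in $u$ with sign-preserving structure, so that $u\ge 0$ follows, after which $v\ge 0$ is immediate from \eqref{cp2}. The $L^p$-bound \eqref{nup} will come from testing \eqref{cpn} against $n^{p-1}$ and noticing that both the diffusion and the reaction carry the correct sign. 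Adding \eqref{cp1} and \eqref{cpn} produces a conservative equation which, integrated over $\Omega$ against the boundary conditions \eqref{cp3}, yields \eqref{e0}, while $\frac{\rd}{\rd t}\|u\|_1=\int_\Omega uf(n)\,\rd x\ge 0$ combined with $u\le u+n$ gives the two-sided estimate \eqref{uL1}.

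The main obstacle is the time-independent lower bound \eqref{e00} on $v$, and this is where I expect the proof to require genuine care. The plan is to represent $v$ via Duhamel's formula,
\begin{equation*}
v(t)=S(t)v^{in}+\tau^{-1}\int_0^t S(t-s)u(s)\,\rd s\,,
\end{equation*}
where $(S(t))_{t\ge 0}$ is the Neumann semigroup on $\Omega$ generated by $\tau^{-1}(\Delta-\beta)$, and to exploit the sharp two-sided positivity of the Neumann heat kernel: there exist $t_0>0$ and $\kappa_0>0$, depending only on $\Omega$, $\tau$, and $\beta$, such that $S(r)g(x)\ge\kappa_0\|g\|_1$ for every $r\in[t_0,2t_0]$, every $x\in\bar\Omega$, and every non-negative $g\in L^1(\Omega)$. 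Combined with the bound $\|u(s)\|_1\ge\|u^{in}\|_1>0$ supplied by \eqref{uL1}, this will yield a positive lower bound on $v(t,x)$ that is uniform in $(t,x)$ for $t\ge 2t_0$. For $t\in[0,2t_0]$, comparing $v$ with the solution of $\tau\partial_t V=\Delta V-\beta V$ starting from the strictly positive constant $\min_{\bar\Omega}v^{in}$ produces a second lower bound, and the minimum of the two constants will serve as $v_*$.

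Finally, the blow-up criterion will follow by a bootstrap: assuming $\|u\|_\infty$ bounded on $[0,T_{\mathrm{max}})$ with $T_{\mathrm{max}}<\infty$, parabolic regularity applied to \eqref{cp2} and \eqref{cpn} will give uniform $L^\infty$-bounds on $(v,n)$; in conjunction with \eqref{e00} and \eqref{g1}, this will keep $\gamma(v)$ bounded above and bounded away from zero, after which successive applications of parabolic $L^p$- and Schauder-estimates to the three equations will produce uniform bounds in a Sobolev norm strong enough to contradict the maximality of $T_{\mathrm{max}}$ in Amann's continuation principle.
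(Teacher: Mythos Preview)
Your proposal is correct and follows essentially the same route as the paper: Amann's quasilinear theory for local well-posedness, comparison arguments for the sign properties and the $L^p$-bounds on $n$, integration for \eqref{e0} and \eqref{uL1}, and the heat-kernel positivity argument for \eqref{e00} (the paper outsources this step to \cite[Lemma~2.6]{Fuji2016}, whose proof is exactly what you sketch).

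The only place where the paper is noticeably more economical is the blow-up reduction. Rather than running a full Schauder bootstrap from a hypothetical bound on $\|u\|_\infty$, the paper simply observes that Amann's criterion already reads
\[
\limsup_{t\nearrow T_{\mathrm{max}}}\left(\|u(t)\|_\infty+\|v(t)\|_\infty+\|1/v(t)\|_\infty+\|n(t)\|_\infty\right)=\infty,
\]
and then eliminates the last three terms directly: $\|n\|_\infty$ and $\|1/v\|_\infty$ are bounded by \eqref{nup} and \eqref{e00}, while the comparison principle applied to \eqref{cp2} gives $\|v(t)\|_\infty\le\max\{\|v^{in}\|_\infty,\|u(t)\|_\infty/\beta\}$. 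Your bootstrap would work too, but this shortcut saves a page.
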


\begin{proof}
	We set $D_0 = \mathbb{R}\times (0,\infty)\times \mathbb{R}$, so that $\left( u^{in},v^{in},n^{in} \right)$ ranges in $D_0$ according to \eqref{ini}. Owing to the regularity \eqref{g1}, \eqref{asf}, and \eqref{ini} of $\gamma$, $f$, and the initial conditions, we infer from \cite[Theorems~14.4 \&~14.6] {Aman1993} (with $s=1$ and $p=N+1$) that there is a unique classical solution $(u,v,n)\in C([0,T_{\mathrm{max}})\times \bar{\Omega};D_0)\cap  C^{1,2}((0,T_{\mathrm{max}})\times \bar{\Omega};\mathbb{R}^3)$ to \eqref{cp} and that, if $T_{\mathrm{max}}<\infty$, then
	\begin{equation*}
	\limsup\limits_{t\nearrow T_{\mathrm{max}}} \left( \|u(t)\|_{\infty} + \|v(t)\|_{\infty} + \left\|\frac{1}{v(t)} \right\|_{\infty} + \|n(t)\|_{\infty} \right) =\infty.
\end{equation*}
	First, \eqref{cp1}, \eqref{cpn}, \eqref{asf}, and the comparison principle guarantee that
	\begin{equation*}
		u(t,x)\ge 0\,, \quad \|n^{in}\|_\infty \ge n(t,x) \ge 0\,, \qquad (t,x)\in  [0,T_{\mathrm{max}})\times \bar{\Omega}\,.
	\end{equation*}
Using again the non-negativity of $f$, the just established non-negativity of $u$ and $n$ allows us to  derive the mass conservation \eqref{e0}, the upper and lower bounds \eqref{uL1}, and the integrability estimates \eqref{nup} on $n$. Next, owing to the lower bound \eqref{uL1} on $\|u\|_1$ and the assumed positivity of $v^{in}$, we deduce the lower bound \eqref{e00} from \cite[Lemma~2.6]{Fuji2016}. 

Assume finally that $T_{\mathrm{max}}<\infty$. According to \eqref{nup} and \eqref{e00}, $\|n\|_\infty$ and $\|1/v\|_\infty$ cannot blow up, while \eqref{cp2} and the comparison principle imply that $\|v\|_\infty\le \max\{ \|v^{in}\|_\infty , \|u\|_\infty/\beta\}$. Consequently, $\|u(t)\|_\infty$ has to be unbounded as $t\to T_{\mathrm{max}}$ and the proof is complete.
\end{proof}

Throughout this paper,  we set 
\begin{equation*}
	\gamma^* \triangleq \sup\limits_{s\ge v_*} \gamma(s)\,, \qquad  f^* \triangleq\sup\limits_{0\leq s\leq \|n^{in}\|_{\infty}}f(s)<\infty.
\end{equation*}
Then, in view of \eqref{nup} and \eqref{e00}, there holds
\begin{equation}\label{gvup}
		0< \gamma(v)\leq \gamma^* \;\;\text{for\ all}\ (t,x) \in
		[0,T_{\mathrm{max}})\times\bar{\Omega}\,,
\end{equation}
and
\begin{equation}\label{fup}
	0\leq f(n)\leq f^* \;\;\text{for\ all}\ (t,x) \in [0,T_{\mathrm{max}})\times\bar{\Omega}\,.
\end{equation}

Next, we let $(\cdot)_+=\max\{\cdot,0\}$ and recall the following result,  see \cite[Proposition~(9.2)]{Aman1983}, \cite[Lemme~3.17]{BeBo1999}, or \cite[Lemma~2.2]{AhnYoon2019}.

\begin{lemma}\label{lm2} \refstepcounter{NumConst}\label{cst1}
	Let $f\in  L^1(\Omega)$. For any $1\leq q< \frac{N}{(N-2)_+}$, there exists a positive constant $C_{\ref{cst1}}(q)$ depending only on $\Omega$ and $\beta$ such that the solution $z \in W^{1,1}(\Omega)$ to
	\begin{equation}\label{helm}
	\begin{cases}
	-\Delta z+\beta z=f,\qquad x\in\Omega\,,\\
	\nabla z\cdot \nu=0\,,\qquad x\in\partial\Omega\,,
	\end{cases}
	\end{equation}
	 satisfies
	\begin{equation*}
	\|z\|_q \leq C_{\ref{cst1}}(q) \|f\|_1\,.
	\end{equation*}
\end{lemma}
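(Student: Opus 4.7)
The claim is a standard $L^1 \to L^q$ mapping estimate for the inverse of the Neumann operator $\mA = -\Delta + \beta$, and the natural tool is duality. Indeed, once one knows that, for the dual exponent $p = q/(q-1)$, the forward operator $\mA$ maps $W^{2,p}(\Omega)$ onto $L^p(\Omega)$ (under Neumann boundary conditions) with bounded inverse, and that $W^{2,p}(\Omega) \hookrightarrow L^\infty(\Omega)$ whenever $2p > N$, one can read off the $L^1 \to L^q$ bound by testing. The hypothesis $1\le q<N/(N-2)_+$ is precisely what guarantees $p>N/2$ (with $p = \infty$ allowed when $q = 1$, in which case the Sobolev embedding issue is trivial), so the embedding into $L^\infty$ holds strictly.

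Concretely, I would proceed as follows. First, for an arbitrary $g\in L^p(\Omega)$, solve the adjoint problem
\begin{equation*}
	-\Delta\varphi+\beta\varphi=g\quad\text{in }\Omega,\qquad \nabla\varphi\cdot\nu=0\quad\text{on }\partial\Omega.
\end{equation*}
Since $\partial\Omega$ is smooth, classical Agmon--Douglis--Nirenberg elliptic regularity yields a unique $\varphi\in W^{2,p}(\Omega)$ with $\|\varphi\|_{W^{2,p}}\le C(p,\Omega,\beta)\|g\|_p$. The condition $p>N/2$ together with the Sobolev embedding $W^{2,p}(\Omega)\hookrightarrow L^\infty(\Omega)$ then gives $\|\varphi\|_\infty\le C(q)\|g\|_p$ with $C(q)$ depending only on $\Omega$ and $\beta$. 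Next, testing \eqref{helm} against $\varphi$ (and symmetrically the adjoint equation against $z$) and using the two Neumann boundary conditions produces the identity
\begin{equation*}
	\int_\Omega z\, g\,\rd x \;=\; \int_\Omega z\,(-\Delta\varphi+\beta\varphi)\,\rd x \;=\; \int_\Omega (-\Delta z+\beta z)\,\varphi\,\rd x \;=\; \int_\Omega f\,\varphi\,\rd x.
\end{equation*}
Estimating the right-hand side by $\|f\|_1\|\varphi\|_\infty\le C(q)\|f\|_1\|g\|_p$ and taking the supremum over $g$ with $\|g\|_p\le 1$ yields the desired bound $\|z\|_q\le C_{\ref{cst1}}(q)\|f\|_1$.

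The one genuinely delicate point is the integration-by-parts step, since $z$ is only known to lie in $W^{1,1}(\Omega)$, which is not regular enough for the Green identity to be applied naively. I would circumvent this by an approximation argument: replace $f$ by a sequence $f_k\in C^\infty(\bar\Omega)$ converging to $f$ in $L^1(\Omega)$, let $z_k\in W^{2,p}(\Omega)\cap C^1(\bar\Omega)$ be the smooth solution of \eqref{helm} with data $f_k$, apply the duality identity in the smooth setting (where all integrations by parts are legitimate) to obtain $\|z_k\|_q \le C_{\ref{cst1}}(q)\|f_k\|_1$, and then pass to the limit using the uniqueness of the $W^{1,1}$-solution to conclude. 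Alternatively, one can simply adopt the duality formula itself as the definition of the very weak (Stampacchia) solution, in which case the inequality is immediate; this is the point of view underlying the references cited in the statement.
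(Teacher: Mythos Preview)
Your duality argument is correct and is in fact the classical Stampacchia approach to $L^1$ elliptic theory. Note, however, that the paper does not supply its own proof of this lemma: it merely records the statement and defers to the literature (Amann, B\'enilan--Brezis, Ahn--Yoon). So there is no ``paper's proof'' to compare against; your write-up is a self-contained justification of a result the authors take as known. The one place to tighten is the endpoint $q=1$: rather than invoking $p=\infty$ in the $W^{2,p}$ regularity statement, it is cleaner to take $g=\mathrm{sign}(z)\in L^\infty(\Omega)$ directly and bound the corresponding $\varphi$ by the maximum principle, $\|\varphi\|_\infty\le 1/\beta$.
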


When $N=2$, we need the following result given in \cite[Lemma~3.3]{Wang20}, which is similar to the celebrated Brezis-Merle inequality \cite[Theorem~1]{BrMe1991}, see \cite[Proposition~6.1]{NSS2001} and \cite[Lemma~A.3]{TW2017} for related results. 

\begin{lemma}\label{lm2e} \refstepcounter{NumConst}\label{cst2}
	Assume that $N=2$. For any  $f\in L^1(\Omega)$ such that
	\begin{equation*}
		\|f\|_1= \Lambda>0
	\end{equation*}
	and $0<R<\frac{4\pi}{\Lambda}$, there is $C_{\ref{cst2}}(\Lambda,R)>0$ depending on $\Omega$, $\beta$, $\Lambda$, and $R$ such that the solution $z$ to \eqref{helm} satisfies
	\begin{equation*}
		\int_\Omega e^{Rz}\rd x\leq C_{\ref{cst2}}(\Lambda,R)\,.
	\end{equation*}
\end{lemma}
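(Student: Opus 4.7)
The plan is to follow the classical Brezis--Merle strategy, adapted to the Neumann realization of $-\Delta+\beta$ on the two-dimensional bounded domain $\Omega$. The first step is a reduction to the case $f\ge 0$. Denoting by $\bar z$ the solution of \eqref{helm} with right-hand side $|f|$, the Neumann comparison principle for $-\Delta+\beta$ yields $|z|\le \bar z$ pointwise in $\bar\Omega$, so that $\int_\Omega e^{Rz}\,\rd x\le \int_\Omega e^{R\bar z}\,\rd x$. Since $|f|$ has the same $L^1$-norm $\Lambda$ as $f$, it suffices to carry out the estimate under the assumption that $f\ge 0$, so that $z\ge 0$ on $\bar\Omega$.

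Next, I would represent $z$ through the Neumann Green's function $G$ of $-\Delta+\beta$ on $\Omega$, namely
\[
z(x)=\int_\Omega G(x,y)\,f(y)\,\rd y,\qquad x\in\bar\Omega,
\]
with $G\ge 0$ by the maximum principle. The crucial analytic ingredient is the sharp two-dimensional logarithmic upper bound
\[
G(x,y)\le \frac{1}{2\pi}\log\frac{1}{|x-y|}+C_0(\Omega,\beta),\qquad x\neq y\ \text{in}\ \bar\Omega,
\]
which can be obtained by a parametrix construction starting from the free-space fundamental solution $\tfrac{1}{2\pi}K_0(\sqrt{\beta}\,|x-y|)$ of $-\Delta+\beta$ in $\mathbb{R}^2$, adding a smooth corrector that restores the Neumann boundary condition, the corrector being controlled by standard elliptic regularity up to $\partial\Omega$. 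Given this bound, Jensen's inequality applied with the probability measure $\rd\mu(y)=f(y)\,\rd y/\Lambda$ and the convex function $\xi\mapsto e^{R\Lambda\xi}$ gives
\[
e^{Rz(x)}=\exp\!\Bigl(R\Lambda\int_\Omega G(x,y)\,\rd\mu(y)\Bigr)\le \frac{1}{\Lambda}\int_\Omega e^{R\Lambda G(x,y)}\,f(y)\,\rd y.
\]

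Inserting the logarithmic upper bound on $G$ and applying Fubini's theorem yields
\[
\int_\Omega e^{Rz(x)}\,\rd x\le \frac{e^{R\Lambda C_0}}{\Lambda}\int_\Omega f(y)\int_\Omega |x-y|^{-R\Lambda/(2\pi)}\,\rd x\,\rd y.
\]
In two space dimensions, the inner spatial integral is bounded uniformly in $y\in\Omega$ precisely when the exponent of the singularity satisfies $R\Lambda/(2\pi)<2$, that is, when $R<4\pi/\Lambda$, which is exactly the hypothesis of the lemma; denoting this uniform bound by $M(\Omega,R\Lambda)$, one concludes with $C_{\ref{cst2}}(\Lambda,R)=e^{R\Lambda C_0}\,M(\Omega,R\Lambda)$, and notes that the estimate degenerates as $R\Lambda\nearrow 4\pi$. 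The main obstacle in this plan is the derivation of the sharp logarithmic upper bound on $G$ with a constant $C_0$ controlled uniformly up to $\partial\Omega$; once this is in hand, the rest of the argument is the standard Brezis--Merle convexity trick combined with the two-dimensional integrability of $|x-y|^{-\alpha}$ for $\alpha<2$.
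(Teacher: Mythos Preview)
The paper does not prove this lemma; it cites \cite[Lemma~3.3]{Wang20} and related results \cite{BrMe1991,NSS2001,TW2017}. Your strategy is the standard Brezis--Merle argument, and it is exactly how the Dirichlet version is proved. The gap lies precisely at the step you yourself flag as the main obstacle: the uniform bound $G(x,y)\le\frac{1}{2\pi}\log\frac{1}{|x-y|}+C_0$ is \emph{false} for the Neumann Green's function. Writing $G(x,y)=\frac{1}{2\pi}K_0(\sqrt\beta\,|x-y|)+H(x,y)$ as you propose, the corrector $H$ solves $(-\Delta_x+\beta)H=0$ in $\Omega$ with Neumann datum $\partial_\nu H=-\partial_\nu\Phi(\cdot-y)$ on $\partial\Omega$; for each fixed interior $y$ this datum is smooth and $H$ is bounded, but as $y\to\partial\Omega$ the datum becomes singular and $H$ blows up. The half-plane model makes this explicit: there $G(x,y)=\frac{1}{2\pi}\big(K_0(\sqrt\beta\,|x-y|)+K_0(\sqrt\beta\,|x-y^*|)\big)$, so when $y$ lies on the boundary the singularity is $\frac{1}{\pi}\log\frac{1}{|x-y|}$, twice the interior coefficient. (For the Dirichlet problem the image charge carries the opposite sign and the corrector is nonpositive by the maximum principle, which is why the original Brezis--Merle proof goes through.)

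Even if you insert the correct two-image upper bound into your Jensen--Fubini step, you are left to control $\sup_{y\in\Omega}\int_\Omega|x-y|^{-\alpha}|x-y^*|^{-\alpha}\,\rd x$ with $\alpha=R\Lambda/(2\pi)$, and a scaling computation shows this supremum diverges like $\mathrm{dist}(y,\partial\Omega)^{2-2\alpha}$ once $\alpha\ge1$, i.e.\ once $R\ge2\pi/\Lambda$. Thus the pointwise Green's function bound plus Jensen reaches at best $R<2\pi/\Lambda$, not the stated $4\pi/\Lambda$; closing the gap requires a more refined boundary localisation/reflection argument or an appeal to a Moser--Trudinger type inequality on $H^1(\Omega)$, along the lines of the references the paper cites.
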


We next define 
\begin{equation}\label{defG}
\Gamma(s)=\int_{1}^s\gamma(\eta)\mathrm{d}\eta\,, \qquad s>0\,,
\end{equation}
and  recall the following simple relation between $\gamma$ and $\Gamma$  derived in \cite[Lemma~6]{FuJi2020c}.

\begin{lemma}\label{lemGam} \refstepcounter{NumConst}\label{cst3}
Assume  \eqref{g1} and let $\varepsilon>0$.  There is $C_{\ref{cst3}}(\varepsilon)>0$ depending on $\varepsilon$ and $\gamma$ such that
\begin{equation}\label{Gamma0}
s\gamma(s)-\gamma(s_0)\leq \Gamma(s)\leq \varepsilon s + C_{\ref{cst3}}(\varepsilon)\,, \qquad s\geq s_0>0\,.
\end{equation}
\end{lemma}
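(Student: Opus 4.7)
The plan is to establish the two inequalities independently, each being a short consequence of the monotonicity of $\gamma$ together with the vanishing limit $\lim_{s\to\infty}\gamma(s)=0$ from assumption~\eqref{g1}; I do not anticipate any substantial obstacle.

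For the upper bound, first I would exploit $\lim_{s\to\infty}\gamma(s)=0$ to select $M_\varepsilon>1$ such that $\gamma(\eta)\leq\varepsilon$ for every $\eta\geq M_\varepsilon$. When $s\geq M_\varepsilon$, splitting the defining integral of $\Gamma$ at $M_\varepsilon$ yields
\[
\Gamma(s)=\int_1^{M_\varepsilon}\gamma(\eta)\,\mathrm{d}\eta+\int_{M_\varepsilon}^s\gamma(\eta)\,\mathrm{d}\eta\leq\int_1^{M_\varepsilon}\gamma(\eta)\,\mathrm{d}\eta+\varepsilon s,
\]
while for $s\leq M_\varepsilon$ the monotonicity of $\Gamma$ immediately gives $\Gamma(s)\leq\int_1^{M_\varepsilon}\gamma(\eta)\,\mathrm{d}\eta$. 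Setting $C_{\ref{cst3}}(\varepsilon)\triangleq\int_1^{M_\varepsilon}\gamma(\eta)\,\mathrm{d}\eta$, a quantity which is finite, positive, and depends only on $\varepsilon$ and $\gamma$, completes the upper inequality.

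For the lower bound, I would integrate by parts on $[s_0,s]$ to obtain
\[
\int_{s_0}^s\gamma(\eta)\,\mathrm{d}\eta=s\gamma(s)-s_0\gamma(s_0)-\int_{s_0}^s\eta\gamma'(\eta)\,\mathrm{d}\eta\geq s\gamma(s)-s_0\gamma(s_0),
\]
using $\gamma'\leq 0$. It then remains to verify the auxiliary bound $\Gamma(s_0)\geq(s_0-1)\gamma(s_0)$ for every $s_0>0$, which I would check by a short case distinction based on the monotonicity of $\gamma$: when $s_0\geq 1$, the inequality $\gamma(\eta)\geq\gamma(s_0)$ on $[1,s_0]$ gives $\Gamma(s_0)\geq(s_0-1)\gamma(s_0)$ directly, while when $s_0\leq 1$, the inequality $\gamma(\eta)\leq\gamma(s_0)$ on $[s_0,1]$ yields
\[
\Gamma(s_0)=-\int_{s_0}^1\gamma(\eta)\,\mathrm{d}\eta\geq -(1-s_0)\gamma(s_0)=(s_0-1)\gamma(s_0).
\]
Combining this bound with the integration by parts estimate gives $\Gamma(s)=\Gamma(s_0)+\int_{s_0}^s\gamma(\eta)\,\mathrm{d}\eta\geq s\gamma(s)-\gamma(s_0)$, as required.
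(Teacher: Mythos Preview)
Your proof is correct and follows essentially the same approach as the paper: both arguments exploit the monotonicity of $\gamma$ and its vanishing limit at infinity. The upper bound is handled identically; for the lower bound, the paper argues directly via $\Gamma(s)\geq (s-1)\gamma(s)\geq s\gamma(s)-\gamma(s_0)$ when $s\geq 1$ and a short computation when $s\in[s_0,1)$, whereas you reach the same conclusion through integration by parts on $[s_0,s]$ combined with the auxiliary bound $\Gamma(s_0)\geq(s_0-1)\gamma(s_0)$ --- a cosmetic reorganization rather than a different idea.
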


\begin{proof} Let $\varepsilon>0$. Owing to \eqref{g1}, there is $s_\varepsilon>1$ such that $\gamma(s) \le \varepsilon$ for $s\ge s_\varepsilon$. Consequently, if $s\ge 1$, then
\begin{equation*}
	\Gamma(s)  = \int_1^{s_\varepsilon} \gamma(\eta)\ \mathrm{d}\eta + \int_{s_\varepsilon}^s \gamma(\eta)\ \mathrm{d}\eta \le s_\varepsilon \gamma(1) + \varepsilon (s-s_\varepsilon)_+ \le s_\varepsilon \gamma(1) + \varepsilon s\,,
\end{equation*}	
while $\Gamma(s)\le 0$ for $s\in [s_0,1)$ (when this interval is non-empty). We have thus proved the upper bound for $\Gamma$ with $C_{\ref{cst3}}(\varepsilon) \triangleq s_\varepsilon \gamma(1)$. 

Consider next  $s\in [1,\infty) \cap [s_0,\infty)$. Since $\gamma$ is non-increasing,
\begin{equation*}
s \gamma(s) - \gamma(s_0) \le (s-1) \gamma(s) \leq \Gamma(s)\,.
\end{equation*}
If $s\in [s_0,1)$ (when this interval is non-empty), then, using again the monotonicity of $\gamma$,
\begin{equation*}
	s\gamma(s)- \Gamma(s) = s \gamma(s) + \int_s^1 \gamma(\eta)\ \mathrm{d}\eta \le (s+1-s)\gamma(s) \le \gamma(s_0) \,,
\end{equation*}
and the proof is complete.
\end{proof}

We next turn to an upper bound for $\Gamma$ when the growth condition \eqref{gamma2} is satisfied.

\begin{lemma}\label{upG} \refstepcounter{NumConst}\label{cst4}
Under the assumptions \eqref{g1} and \eqref{gamma2}, there is $C_{\ref{cst4}}>0$ depending on $\gamma$ such that, for all $s\geq 1$,
\begin{equation*}
\Gamma(s)\leq \Gamma^*(s)\triangleq\begin{cases}
C_{\ref{cst4}}\log{s},\quad&\text{ when }\;\; l=1\\
 \\
\displaystyle{\frac{C_{\ref{cst4}}(s^{1-l}-1)}{1-l}},\quad&\text{ when }\;\; l\neq 1.
\end{cases}
\end{equation*}
\end{lemma}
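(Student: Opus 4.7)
The plan is to exploit only the upper growth condition in \eqref{gamma2}, namely $\limsup_{s\to\infty} s^l \gamma(s) < \infty$. This provides constants $s_1 > 1$ and $M > 0$ such that $\gamma(s) \leq M s^{-l}$ for all $s \geq s_1$; the lower bound in \eqref{gamma2} plays no role here. For $s \geq s_1$, I would split
\begin{equation*}
\Gamma(s) = \Gamma(s_1) + \int_{s_1}^s \gamma(\eta)\, \mathrm{d}\eta \leq \Gamma(s_1) + M \int_{s_1}^s \eta^{-l}\, \mathrm{d}\eta,
\end{equation*}
and evaluate the remaining elementary integral, which equals $M(\log s - \log s_1)$ when $l = 1$ and $M(s^{1-l} - s_1^{1-l})/(1-l)$ when $l \neq 1$.

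Second, I would observe that in every subcase the candidate majorant $\Gamma^*$ is non-negative and non-decreasing on $[1,\infty)$ with $\Gamma^*(1) = 0$, matching $\Gamma(1) = 0$. For $s \in [1, s_1]$, the monotonicity of $\gamma$ guaranteed by \eqref{g1} yields $\Gamma(s) \leq \gamma(1)(s-1)$, which also vanishes at $s=1$. A Taylor expansion near $s = 1$ gives $\log s \sim s-1$ and $(s^{1-l}-1)/(1-l) \sim s-1$, so the ratio $\Gamma/\Gamma^*$ extends continuously to a bounded function on the compact interval $[1, s_1]$.

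Finally, I would choose $C_{\ref{cst4}}$ large enough to simultaneously cover both regimes: on $[s_1, \infty)$ it must dominate $M$ plus enough slack to absorb the residual constants $\Gamma(s_1)$ and $s_1^{1-l}$ (respectively $\log s_1$) produced by the splitting; on $[1, s_1]$ it must dominate the continuous ratio $\Gamma/\Gamma^*$ on that compact set. Both adjustments are routine bookkeeping once the three subcases $l=1$, $l<1$, and $l>1$ are treated separately (noting that in the third subcase one has $s^{1-l}-1 \leq 0$ and $1-l < 0$, so the quotient still has the correct sign). I do not anticipate any serious obstacle; the only mildly delicate point is the careful handling of the sign of $1-l$ and the degenerate behavior at the boundary $s=1$ of the target inequality.
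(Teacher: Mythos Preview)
Your proposal is correct and follows the same overall idea as the paper (use the upper bound in \eqref{gamma2} to control $\gamma$ by a power, then integrate), but the paper's execution is cleaner and avoids your two-regime split. Rather than handling $[1,s_1]$ and $[s_1,\infty)$ separately, the paper first observes that the power-law bound extends to all of $[1,\infty)$: since $\gamma$ is non-increasing by \eqref{g1} and $l\ge 0$, one has $s^l\gamma(s)\le s_0^l\gamma(1)$ for $s\in[1,s_0]$, so after enlarging the constant, $\gamma(s)\le C s^{-l}$ holds for every $s\ge 1$. A single integration $\Gamma(s)=\int_1^s\gamma(\eta)\,\mathrm{d}\eta\le C\int_1^s\eta^{-l}\,\mathrm{d}\eta$ then yields the claimed bound directly, with the right-hand side vanishing automatically at $s=1$.

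This shortcut spares you the compactness argument on $[1,s_1]$, the L'H\^opital-type analysis of the ratio $\Gamma/\Gamma^*$ near $s=1$, and the case-by-case bookkeeping needed to absorb $\Gamma(s_1)$ and the residual constants into $C_{\ref{cst4}}$. Your route works, but the paper's observation that monotonicity propagates the power bound down to $s=1$ is the more economical move.
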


\begin{proof}
In view of assumption \eqref{gamma2}, there are $s_0>1$ and $C>0$ such that $s^l\gamma(s)\leq C$ for all $s\geq s_0$. In addition, the monotonicity of $\gamma$ and the non-negativity of $l$ ensure that $s^l\gamma(s)\leq s_0^l\gamma(1)$ for all $1\leq s\leq s_0$. Thus, $s^l\gamma(s)\leq C$ for all $s\geq 1$, from which we deduce that
\begin{equation*}
\Gamma(s)=\int_1^s\gamma(\eta)\mathrm{d}\eta\leq C\int_1^s\eta^{-l}\mathrm{d}\eta=\begin{cases}
C\log{s},\quad&\text{ when }\;\; l=1, \\
 \\
\displaystyle{\frac{C(s^{1-l}-1)}{1-l}},\quad&\text{ when }\;\; l\neq 1,
\end{cases}
\end{equation*}
as claimed.
\end{proof}

We next provide some sufficient conditions on $\gamma$ which guarantee that it satisfies assumption~\eqref{A3}.

\begin{lemma}\label{sufconA3}
Assume that $\gamma\in C^1((0,\infty))$ is positive and one of the following cases holds.
	\begin{itemize}
		\item [(a)]  The function $\gamma$ is non-increasing on $(0,\infty)$ and there is $A>0$ such that
		\begin{equation*}
			\int_s^{2s} \gamma(\eta)\ \mathrm{d}\eta \le A\,, \qquad s\ge 1\,;
		\end{equation*}
		\item [(b)] There are $b_0\in (0,1]$ and $s_1>0$ such that $s\gamma'(s)+b_0\gamma(s)\leq 0$ for all $s\geq s_1$;
		\item [(c)]  There are constants $l>0$ and $0<B_l\leq A_l<\infty$ such that  $(1-l)A_l<B_l$ and
		\begin{equation*}
			B_l=\liminf\limits_{s\rightarrow\infty}s^l\gamma(s)\leq \limsup\limits_{s\rightarrow\infty}s^l\gamma(s)=A_l.
		\end{equation*}
	\end{itemize}
	Then assumption~\eqref{A3} is fulfilled.
\end{lemma}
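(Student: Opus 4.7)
The plan is to verify assumption \eqref{A3} separately for each of the three cases, choosing $b_0 \in (0,1]$ in a way tailored to the hypothesis. Throughout I will denote $\Phi(s) \triangleq s\gamma(s)+(b_0-1)\Gamma(s)$ with $\Gamma$ as in \eqref{defG}, so that \eqref{A3} is equivalent to showing $\Phi \le K_0(s_0)$ on $[s_0,\infty)$. In each case the strategy will be to produce a quantitative $s$-independent bound on $\Phi$ for large $s$ and absorb what happens on a compact initial interval into the constant $K_0(s_0)$.

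Cases (a) and (b) are handled quickly. For (a), I take $b_0 = 1$, which reduces the task to bounding $s\gamma(s)$. The monotonicity of $\gamma$ on $[s,2s]$ combined with the hypothesis gives $s\gamma(2s) \le \int_s^{2s} \gamma(\eta)\,\rd\eta \le A$, hence $\sigma\gamma(\sigma) \le 2A$ for $\sigma \ge 2$; a separate treatment on the compact interval $[s_0,2]\cap [s_0,\infty)$ yields the claim. For (b), I again take $b_0$ equal to the constant appearing in the hypothesis. A direct computation gives $\Phi'(s) = s\gamma'(s) + b_0\gamma(s)$, which is non-positive on $[s_1,\infty)$, so $\Phi$ is non-increasing there. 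Combining this with the boundedness of $\Phi$ on the compact set $[s_0,s_1]$ finishes the argument.

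Case (c) requires more care and is the main obstacle. When $l \ge 1$, I set $b_0 = 1$ and deduce from $\limsup s^l\gamma(s) = A_l < \infty$ that $s\gamma(s) = s^{1-l}\cdot s^l\gamma(s)$ remains bounded for $s \ge 1$. The delicate subcase is $l \in [0,1)$, where the condition $(1-l)A_l < B_l$ dictates the choice of $b_0$. Fixing $\varepsilon > 0$ small enough that $(1-l)(A_l+\varepsilon) < B_l - \varepsilon$, the definitions of $A_l$ and $B_l$ provide $s_\varepsilon \ge 1$ such that
\begin{equation*}
(B_l - \varepsilon)s^{-l} \le \gamma(s) \le (A_l + \varepsilon)s^{-l}\,, \qquad s \ge s_\varepsilon\,.
\end{equation*}
Integrating the lower bound gives $\Gamma(s) \ge \tfrac{B_l - \varepsilon}{1-l} s^{1-l} - C$, and, since $b_0 - 1 \le 0$, substitution into $\Phi$ yields
\begin{equation*}
\Phi(s) \le \left[ (A_l+\varepsilon) + (b_0-1)\frac{B_l - \varepsilon}{1-l}\right] s^{1-l} + C'\,.
\end{equation*}
The coefficient in brackets is non-positive precisely when $b_0 \le 1 - \tfrac{(1-l)(A_l+\varepsilon)}{B_l-\varepsilon}$, and the assumption $(1-l)A_l < B_l$ makes this upper bound strictly positive once $\varepsilon$ is small enough, so a legitimate choice of $b_0 \in (0,1]$ exists and the proof concludes. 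The key conceptual point is that the freedom to take $b_0 < 1$ is exactly what allows the $\Gamma$-term to cancel the dominant $s^{1-l}$ growth of $s\gamma(s)$, and the hypothesis $(1-l)A_l < B_l$ is precisely the quantitative condition ensuring that this cancellation is achievable within the admissible range of $b_0$.
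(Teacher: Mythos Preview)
Your proof is correct and follows essentially the same approach as the paper: in each case you choose the same $b_0$ (namely $b_0=1$ in (a) and in (c) with $l\ge 1$, the given $b_0$ in (b), and a suitable $b_0<1$ in (c) with $l\in(0,1)$) and bound $\Phi$ by the same mechanism. The only cosmetic differences are that in (b) you observe $\Phi'\le 0$ directly rather than writing out the integral via the fundamental theorem of calculus, and in (c) you argue for the existence of admissible $\varepsilon$ and $b_0$ rather than exhibiting the explicit choices the paper makes; neither constitutes a genuinely different route.
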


\begin{proof}
We consider $s_0>0$ and handle the three cases in  different ways.

\noindent Case~(a). In that case, we observe that, for $s\ge \max\{s_0,1\}$, 
\begin{equation*} 
	s \gamma(s) \le 2 \int_{s/2}^s \gamma(\eta)\ \mathrm{d}\eta \le 2A\,,
\end{equation*}
while $s\gamma(s) \le \gamma(s_0)$ for $s\in [s_0,1)$ (when this interval is non-empty). Consequently, assumption~\eqref{A3} is satisfied with $b_0=1$.

\smallskip

\noindent Case~(b). In that case, we may assume $s_1\ge \max\{s_0,1\}$ and find that, for $s\geq s_1$,
\begin{align*}
	s\gamma(s)+(b_0-1)\Gamma(s)=&\int_{s_1}^s\left(\eta\gamma'(\eta)+b_0\gamma(\eta)\right)\rd\eta+s_1\gamma(s_1)+(b_0-1)\Gamma(s_1)\\
	\leq& s_1\gamma(s_1)\,,
\end{align*}
due to the non-negativity of $\Gamma(s_1)$. For $s\in[s_0,s_1)$, we observe that \begin{align*}
	s\gamma(s)+(b_0-1)\Gamma(s)\leq & s_1 \|\gamma\|_{L^\infty(s_0,s_1)} + (1-b_0)\int_{\min\{1,s_0\}}^1\gamma(\eta)\rd\eta \\
	\leq & (1+s_1) \|\gamma\|_{L^\infty(\min\{1,s_0\},s_1)}\,.
\end{align*}
Thus, \eqref{A3} also holds.

\smallskip

\noindent Case~(c). In that case, for any $\varepsilon\in (0,B_l)$,there is $s_{\varepsilon}\ge \max\{1,s_0\}$ such that
	\begin{equation*}
		B_l-\varepsilon\le s^l\gamma(s)\leq A_l+\varepsilon\,, \qquad s\geq s_{\varepsilon}\,.
	\end{equation*}
In particular, for $b_0\in (0,1]$, 
\begin{subequations}\label{y1}
\begin{align}
	s \gamma(s) + (b_0-1) \Gamma(s) & \le  s\gamma(s) + (b_0-1) \int_{s_\varepsilon}^s \gamma(\eta)\ \mathrm{d}\eta \nonumber \\
	& \le (A_l+\varepsilon) s^{1-l} - (1-b_0)(B_l-\varepsilon) \int_{s_\varepsilon}^s \eta^{-l}\ \mathrm{d}\eta\,, \qquad  s\ge s_\varepsilon\,, \label{y1a}
\end{align}
while
\begin{equation}
	s \gamma(s) + (b_0-1) \Gamma(s) \le 2 s_\varepsilon \|\gamma\|_{L^\infty(s_0,s_\varepsilon)}\,, \qquad s\in [s_0,s_\varepsilon]\,. \label{y1b}
\end{equation}
\end{subequations}
At this point, either $l\ge 1$ and we readily deduce from \eqref{y1} with $b_0=1$ and $\varepsilon = \varepsilon_1 = B_l/2$ that \eqref{A3} is satisfied with $b_0=1$ and $K_0(s_0) = A_l + B_l + 2 s_{\varepsilon_1} \|\gamma\|_{L^\infty(s_0,s_{\varepsilon_1})}$.

\noindent Or $l\in (0,1)$. Since $(1-l)A_l<B_l$, we choose 
\begin{equation*}
	b_0 = \frac{B_l-(1-l)A_l}{2B_l}\in (0,1) \;\;\text{ and }\;\; \varepsilon=\varepsilon_2 =	\frac{(1-b_0)B_l-(1-l)A_l}{2-l-b_0} \in (0,B_l)\,.
\end{equation*}
It then follows from \eqref{y1a} that, for any $s\geq s_{\varepsilon_2}$,
\begin{align*}
	s \gamma(s) + (b_0-1) \Gamma(s) & \le  \frac{(1-l)(A_l+\varepsilon_2) - (1-b_0)(B_l-\varepsilon_2)}{1-l} s^{1-l} + \frac{(1-b_0)(B_l-\varepsilon_2)}{1-l} s_{\varepsilon_2}^{1-l} \\
	& \le \frac{(2-l-b_0) \varepsilon_2 - (1-b_0)B_l+ (1-l) A_l}{1-l} s^{1-l} + \frac{B_l}{1-l} s_{\varepsilon_2}^{1-l}  \\
	& = \frac{B_l}{1-l} s_{\varepsilon_2}^{1-l}\,.
\end{align*}
Gathering \eqref{y1b} with the above choice of $b_0$ and the above inequality entails that \eqref{A3} is satisfied in that case as well and completes the proof.
\end{proof}

We finally recall the following lemma given in \cite[Lemma~A.1]{Laur1994}  which we shall use later in Section~\ref{sec3} to complete the Alikakos-Moser iterative argument.

\begin{lemma}\label{lmiter}
	Let $\theta>1$, $b\geq0$, $c\in \mathbb{R}$, $\kappa_0\geq 1$, $\kappa_1\geq 1$, and $\delta_0$ be given numbers such that
	\begin{equation*}
		\delta_0+\frac{c}{\theta-1}>0.
	\end{equation*}
We consider the sequence $(\delta_j)_{j\geq0}$ of real numbers defined by
\begin{equation*}
	\delta_{j+1}=\theta\delta_j+c\,, \qquad j\in\mathbb{N}.
\end{equation*}
Assume further that $(\eta_j)_{j\geq0}$ is a sequence of positive real numbers satisfying 
	\begin{align*}
&\eta_0\leq \kappa_1^{\delta_0},\\
&\eta_{j+1}\leq \kappa_0 \delta_{j+1}^{b} \max\{\kappa_1^{\delta_{j+1}},\eta_j^\theta\}\,, \qquad j\in\mathbb{N}\,.
	\end{align*}	
Then the sequence $(\eta_j^{1/\delta_j})_{j\geq0}$ is bounded.
\end{lemma}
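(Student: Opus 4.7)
The plan is to pass to logarithms, renormalize by $\theta^j$, and iterate the resulting linear inequality. First, I would solve the affine recurrence $\delta_{j+1}=\theta\delta_j+c$ explicitly: letting $\Delta\triangleq\delta_0+c/(\theta-1)$, which is positive by hypothesis, one obtains $\delta_j=\theta^j\Delta-c/(\theta-1)$ for every $j\in\mathbb{N}$. In particular $\delta_j/\theta^j\to\Delta>0$, so there are $0<\underline{A}\le\overline{A}$ and $j_0\ge0$ with $\underline{A}\theta^j\le\delta_j\le\overline{A}\theta^j$ for every $j\ge j_0$.

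Next, introducing $L_j\triangleq\log\eta_j$, the recursive assumption becomes
\begin{equation*}
L_{j+1}\le\log\kappa_0+b\log\delta_{j+1}+\max\{\delta_{j+1}\log\kappa_1,\,\theta L_j\}\,,\qquad j\in\mathbb{N}\,.
\end{equation*}
Dividing by $\theta^{j+1}$ and setting $x_j\triangleq L_j/\theta^j$ yields
\begin{equation*}
x_{j+1}\le\varepsilon_j+\max\{\mu_j\log\kappa_1,\,x_j\}\,,\qquad j\in\mathbb{N}\,,
\end{equation*}
with $\varepsilon_j\triangleq(\log\kappa_0+b\log\delta_{j+1})/\theta^{j+1}$ and $\mu_j\triangleq\delta_{j+1}/\theta^{j+1}$. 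Since $\kappa_0,\kappa_1\ge1$, one has $\log\kappa_0\ge0$ and $\log\kappa_1\ge0$; moreover $\mu_j\le\overline{A}$ for $j\ge j_0$, while $\log\delta_{j+1}=O(j+1)$ and $\theta>1$ together give $\sum_{j\ge0}\varepsilon_j^+<\infty$. Define $M\triangleq\overline{A}\log\kappa_1$ and $z_j\triangleq\max\{x_j,M\}$; since $\mu_j\log\kappa_1\le M$, one has $x_{j+1}\le\varepsilon_j+z_j$, and an easy case analysis (depending on the sign of $\varepsilon_j$) then gives $z_{j+1}\le z_j+\varepsilon_j^+$. Telescoping yields $z_j\le z_0+\sum_{k\ge0}\varepsilon_k^+\triangleq K$ for all $j$, hence $L_j/\theta^j\le K$ for every $j\in\mathbb{N}$.

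Finally, combining this upper bound with $\delta_j\ge\underline{A}\theta^j$ for $j\ge j_0$ provides a $j$-independent upper bound on $L_j/\delta_j$, and thus on $\eta_j^{1/\delta_j}=\exp(L_j/\delta_j)$, for every $j\ge j_0$; handling the finitely many indices $j<j_0$ separately completes the proof. The main obstacle is conceptual rather than computational: one must match the geometric blow-up rate $\theta^j$ built into the recursion with the actual growth of $(\delta_j)$, and the assumption $\delta_0+c/(\theta-1)>0$ is exactly what guarantees $\delta_j\sim\Delta\theta^j$, so that the renormalization by $\theta^j$ is consistent with the exponents $1/\delta_j$. In particular, the possibly negative sign of $c$ (which allows $\delta_j<0$ for small $j$) is irrelevant because only the asymptotic behavior of $(\delta_j)$ enters the final step.
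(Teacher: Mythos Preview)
The paper does not prove this lemma; it merely recalls it from \cite[Lemma~A.1]{Laur1994}. Your argument is correct and is the standard route: solve the affine recursion for $\delta_j$, pass to logarithms, renormalize by $\theta^j$, and telescope the resulting inequality using that $\sum_j (j+1)\theta^{-(j+1)}<\infty$. Two minor points worth tightening: (i) taking $\log\delta_{j+1}$ tacitly assumes $\delta_{j+1}>0$, which can fail for finitely many small indices when $c>0$ (not when $c<0$, contrary to your closing remark); simply start the telescoping at an index $j_0$ with $\delta_j>0$ for $j\ge j_0$ and absorb the first $j_0$ terms into the constant. (ii) Your telescoping is written from $j=0$ but relies on $\mu_j\le\overline{A}$; since $(\mu_j)$ is convergent and hence bounded, you can indeed choose $\overline{A}=\sup_j\mu_j$ and start at $0$, or else start at $j_0$. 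Either fix is immediate.
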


\section{A Two-sided Estimate by a Comparison Argument}\label{sec-comparison}

In this section, we fix initial conditions $(u^{in},v^{in},n^{in})$ satisfying \eqref{ini} and denote the corresponding classical solution to \eqref{cp} given by Theorem~\ref{local} by $(u,v,n)$, which is defined on $[0,T_{\mathrm{max}})$ for some $T_{\mathrm{max}}\in (0,\infty]$. We improve the comparison method proposed in \cite{FuJi2020c} (see also \cite{FuJi2020,LiJi2020}) to develop a two-sided control estimate of $v$ by some auxiliary functions which we define now. Specifically, introducing the operator $\mathcal{A}$ on $L^2(\Omega)$ defined by
 \begin{equation}
	\mathrm{dom}(\mathcal{A}) \triangleq \{ z \in H^2(\Omega)\ :\ \nabla z\cdot \nu = 0 \;\text{ on }\; \partial\Omega\}\,, \qquad  \mathcal{A}z \triangleq - \Delta z + \beta z\,, \quad z\in \mathrm{dom}(\mathcal{A})\,, \label{y2}
\end{equation} 
we recall that $\mathcal{A}$ generates an analytic semigroup on $L^p(\Omega)$ and is invertible on $L^p(\Omega)$ for all $p\in (1,\infty)$. We then set 
\begin{equation}\label{defS}
	S(t) \triangleq \mathcal{A}^{-1}[(u+n)(t)]\ge 0\,,  \qquad t\in [0,T_{\mathrm{max}})\,,
\end{equation} 
and
\begin{equation}\label{defw}
	w(t) \triangleq \mathcal{A}^{-1}[u(t)]\ge 0\,,  \qquad t\in [0,T_{\mathrm{max}})\,,
\end{equation}
the non-negativity of $S$ and $w$ being a consequence of that of $u+n$ and $u$ and the comparison principle. Firstly, due the time continuity of $u$ and $n$, 
\begin{equation*}
	S^{in}\triangleq S(0)=\mA^{-1}[u^{in}+ n^{in}] \;\;\text{ and }\;\; w^{in}\triangleq w(0)=\mA^{-1}[u^{in}]\,,
\end{equation*} 
and it follows from the regularity assumption \eqref{ini} on the initial conditions that $S^{in}$ and $w^{in}$ both belong to $W^{3,N+1}(\Omega)$. 

Secondly, we remark that, due to \cite[Lemma~2.3]{Fuji2016}, \eqref{ini}, \eqref{e0}, and \eqref{uL1}, there are positive constants $S_*$ and $w_*$ depending only on $N$, $\Omega$, $\beta$, and the initial data such that
\begin{equation}\label{lowbsw}
	S\geq S_* \;\;\text{and}\;\; w\geq w_* \;\;\text{ in }\;\; [0,T_{\mathrm{max}})\times\bar{\Omega}\,.
\end{equation}
In addition, since $u\le u+n\le u + \|n^{in}\|_\infty$ in $(0,T_{\mathrm{max}})\times\Omega$ by \eqref{nup} (with $p=\infty$), it readily follows from \eqref{defS}, \eqref{defw}, and the comparison principle that
\begin{equation}
	w \le S \le w + \frac{\|n^{in}\|_\infty}{\beta} \;\;\text{ in }\;\; [0,T_{\mathrm{max}})\times\bar{\Omega}\,. \label{y3}
\end{equation}	

 Now we derive two key identities involving the auxiliary functions $S$ and $w$ and supplement \eqref{lowbsw} with pointwise upper bounds for $S$ and $w$, respectively. 

\begin{lemma}\label{keylem1}
Assume that $\gamma$ satisfies \eqref{g1}. The following two key identities hold
\begin{equation}\label{var0}
	\partial_t S + u\gamma(v) +  n  =\beta\mA^{-1}[ u\gamma(v) + n]
\end{equation} 
and
\begin{equation}\label{var1}
	\partial_t w+u\gamma(v)=\mA^{-1}[\beta u\gamma(v) + uf(n)]
\end{equation}
in $(0,T_{\mathrm{max}})\times \Omega$. Moreover, 
\begin{equation}\label{ptesta}
	w(t,x)\leq	S(t,x)\leq S^{in}(x)e^{\beta\max\{\gamma^*,1\}t}\,, \qquad (t,x)\in [0,T_{\mathrm{max}})\times\bar{\Omega}\,.
\end{equation}
\end{lemma}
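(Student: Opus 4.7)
The plan is to establish the two identities by applying $\mA^{-1}$ to the evolution equations for $u+n$ and $u$ respectively, and then derive the pointwise estimate via a Gronwall argument applied to \eqref{var0}.

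First I would sum the equations \eqref{cp1} and \eqref{cpn}, observing that the reaction terms $\pm u f(n)$ cancel, to obtain
\[
\partial_t(u+n)=\Delta\bigl(u\gamma(v)+n\bigr)\,.
\]
Since $u\gamma(v)$ and $n$ both satisfy the homogeneous Neumann boundary condition by \eqref{cp3}, the function $u\gamma(v)+n$ lies in $\mathrm{dom}(\mA)$, and the elementary identity $\Delta z=\beta z-\mA z$ yields
\[
\Delta\bigl(u\gamma(v)+n\bigr)=\beta\bigl(u\gamma(v)+n\bigr)-\mA\bigl(u\gamma(v)+n\bigr)\,.
\]
Applying $\mA^{-1}$ and using that it commutes with $\partial_t$ (since $\mA$ is time-independent), I would then conclude
\[
\partial_t S=\mA^{-1}[\partial_t(u+n)]=\beta\mA^{-1}[u\gamma(v)+n]-u\gamma(v)-n\,,
\]
which is precisely \eqref{var0}. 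The identity \eqref{var1} for $w$ follows from the same computation applied to \eqref{cp1} alone, the extra source term $u f(n)$ generating the additional contribution $\mA^{-1}[u f(n)]$ on the right-hand side.

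Next I would turn to the pointwise estimate. The lower bound $w\leq S$ has already been recorded in \eqref{y3} as a consequence of $u\leq u+n$ and the elliptic comparison principle. For the upper bound on $S$, I would combine the bound $\gamma(v)\leq\gamma^*$ from \eqref{gvup} with the positivity of $\mA^{-1}$ to get
\[
0\leq\mA^{-1}[u\gamma(v)+n]\leq \max\{\gamma^*,1\}\,\mA^{-1}[u+n]=\max\{\gamma^*,1\}\,S\,.
\]
Inserting this into \eqref{var0} and dropping the non-negative terms $u\gamma(v)$ and $n$ on the left-hand side produces the pointwise differential inequality $\partial_t S\leq\beta\max\{\gamma^*,1\}\,S$. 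Freezing $x\in\bar\Omega$ and applying Gronwall's lemma in the time variable then yields $S(t,x)\leq S^{in}(x)\,e^{\beta\max\{\gamma^*,1\}t}$, as claimed.

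I do not anticipate any serious obstacle, since every step is a routine manipulation involving $\mA^{-1}$, the elliptic comparison principle for $\mA$, and Gronwall's lemma. The one point requiring a line of justification is checking that $u\gamma(v)+n$ genuinely belongs to $\mathrm{dom}(\mA)$ so that the rearrangement $\Delta z=\beta z-\mA z$ is legitimate; this is secured by the regularity $(u,v,n)\in C^{1,2}((0,T_{\mathrm{max}})\times\bar\Omega;\RR^3)$ from Theorem~\ref{local} together with the no-flux boundary condition \eqref{cp3}.
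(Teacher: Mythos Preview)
Your proposal is correct and follows essentially the same approach as the paper: rewrite the equations using $\Delta z=\beta z-\mA z$, apply $\mA^{-1}$ to obtain \eqref{var0} and \eqref{var1}, then use the elliptic comparison principle together with $\gamma(v)\le\gamma^*$ to derive the differential inequality $\partial_t S\le\beta\max\{\gamma^*,1\}S$ and integrate. The only cosmetic difference is that the paper first rewrites \eqref{cp1} and \eqref{cpn} individually before summing, whereas you sum first and then rearrange, which amounts to the same computation.
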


\begin{proof} By definition of $\mA$, we rewrite \eqref{cp1} and \eqref{cpn} as:
\begin{equation}\label{recpg1}
	\partial_t u = -\mA[u\gamma(v)] + \beta u\gamma(v) + uf(n) \;\;\text{ in }\;\; (0,T_{\mathrm{max}})\times \Omega
\end{equation}
and
\begin{equation}\label{recpgn}
	\partial_t n = - \mA[n] + \beta n -  uf(n) \;\;\text{ in }\;\; (0,T_{\mathrm{max}}) \times \Omega\,.
\end{equation}
Adding \eqref{recpg1} and \eqref{recpgn}, we obtain
\begin{equation*}
	\partial_t (u + n) + \mA[u\gamma(v) + n] = \beta (u\gamma(v) + n)\;\;\text{ in }\;\; (0,T_{\mathrm{max}}) \times \Omega\,.
\end{equation*}
Applying $\mA^{-1}$ to both sides of the above identity gives the key identity~\eqref{var0}. As for the identity~\eqref{var1}, it simply follows by applying $\mA^{-1}$ on both sides of \eqref{recpg1}.
	
We next infer from \eqref{gvup}, \eqref{var0}, the non-negativity of $\gamma$, $u$, and $n$, and the (elliptic) comparison principle  that,
\begin{align*}
	\partial_t S \le \partial_t S + u\gamma(v) + n & \le \beta \mA^{-1}[u\gamma(v) + n]  \leq \beta \mA^{-1}[\gamma^* u + n] \\ 
	& \leq \beta \max\{\gamma^*,1\} \mA^{-1}[u+ n] = \beta \max\{\gamma^*,1\} S
\end{align*} 
in $(0,T_{\mathrm{max}}) \times \Omega$. Integrating the above differential inequality  with respect to time and using \eqref{y3} give \eqref{ptesta}.
\end{proof}

The next lemma establishes an upper bound on $v$ in terms of $S$ (and thus also in terms of $w$ in view of \eqref{y3}). The proof is the same as \cite[Lemma~7]{FuJi2020c} with a minor modification. 

\begin{lemma}\label{vbd}
	Assume that $\gamma$ satisfies assumption~\eqref{g1} and consider  $\rho\ge 1$ such that $\tau \gamma(\rho)<1$. There is $K_1(\rho)>0$ depending on $\Omega$, $\gamma$, $\tau$, $\beta$, the initial data, and $\rho$ such that
	\begin{equation}\label{vbound}
	v(t,x)\leq \frac{1}{1-\tau\gamma(\rho)}\bigg(S(t,x)+K_1(\rho)\bigg)\,, \qquad (t,x)\in [0,T_{\mathrm{max}}) \times \bar{\Omega}\,.
	\end{equation}
\end{lemma}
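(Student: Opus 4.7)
The plan is to apply the parabolic comparison principle to the auxiliary function
$$\Psi(t,x) \triangleq (1-\tau\gamma(\rho))v(t,x) - S(t,x) - K_1(\rho),$$
with the constant $K_1(\rho)>0$ chosen below, and to show $\Psi\leq 0$ on $[0,T_{\mathrm{max}})\times\bar{\Omega}$. Setting $\ml z = \tau\partial_t z - \Delta z + \beta z$ as in the introduction, one first notes that $\ml v = u$ by \eqref{cp2}, while the key identity \eqref{var0} combined with $\mA S = u+n$ gives
$$\ml S = u(1-\tau\gamma(v)) + (1-\tau)n + \tau\beta\mA^{-1}[u\gamma(v)+n].$$
Subtracting these two identities yields
$$\ml\Psi = \tau u\bigl(\gamma(v)-\gamma(\rho)\bigr) + (\tau-1)n - \tau\beta\mA^{-1}[u\gamma(v)+n] - \beta K_1(\rho).$$
In addition, $\nabla\Psi\cdot\nu = 0$ on $\partial\Omega$, since $\nabla v\cdot\nu = 0$ by \eqref{cp3} and $\nabla S\cdot\nu = 0$ by $S\in\mathrm{dom}(\mA)$.

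Next, $K_1(\rho)$ would be chosen large enough to fulfill three requirements. First, $K_1(\rho) \geq (1-\tau\gamma(\rho))\|v^{in}\|_\infty$ ensures, thanks to $S^{in}\geq 0$, that $\Psi(0,\cdot)\leq 0$ on $\bar{\Omega}$. Second, $K_1(\rho) \geq \rho$ guarantees that wherever $\Psi>0$, one has $(1-\tau\gamma(\rho))v > S + K_1(\rho) \geq \rho \geq \rho(1-\tau\gamma(\rho))$, hence $v > \rho$; the monotonicity of $\gamma$ from \eqref{g1} then yields $\gamma(v) \leq \gamma(\rho)$, so that the first term in the formula for $\ml\Psi$ is non-positive on $\{\Psi>0\}$. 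Third, taking $K_1(\rho) > (\tau-1)_+\|n^{in}\|_\infty/\beta$ and using both the non-negativity of $\mA^{-1}[u\gamma(v)+n]$ (by elliptic comparison) and the pointwise bound $n \leq \|n^{in}\|_\infty$ from \eqref{nup}, one obtains
$$\ml\Psi \leq (\tau-1)_+\|n^{in}\|_\infty - \beta K_1(\rho) < 0 \quad\text{on } \{\Psi>0\}.$$

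The proof would then conclude by a classical contradiction argument: if $\Psi$ attained a positive maximum at some $(t_0,x_0)\in(0,T_{\mathrm{max}})\times\bar{\Omega}$, the parabolic maximum principle---supplemented by Hopf's lemma when $x_0\in\partial\Omega$ to handle the Neumann condition $\nabla\Psi\cdot\nu=0$---would yield $\ml\Psi(t_0,x_0) \geq \beta\Psi(t_0,x_0) > 0$, contradicting the previous display. Hence $\Psi\leq 0$, which is precisely \eqref{vbound}. The main subtlety compared to the two-component setting of \cite[Lemma~7]{FuJi2020c} is the extra term $(\tau-1)n$, whose sign is not a priori fixed; it is absorbed thanks to the time-independent upper bound \eqref{nup} on $\|n\|_\infty$, and this is the minor modification referred to in the statement.
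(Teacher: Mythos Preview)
Your proof is correct but follows a genuinely different route from the paper's. The paper introduces the primitive $\Gamma(s)=\int_1^s\gamma(\eta)\,\mathrm{d}\eta$ and uses the identity $\gamma(v)u=\ml\Gamma(v)+\gamma'(v)|\nabla v|^2+\beta(v\gamma(v)-\Gamma(v))$ to derive a \emph{global} differential inequality $\ml v\le \ml(S+\tau\Gamma(v))+C$, valid everywhere in $(0,T_{\mathrm{max}})\times\Omega$. The standard parabolic comparison principle then yields the intermediate estimate $v\le S+\tau\Gamma(v)+K_1$, after which the purely algebraic bound $\Gamma(s)\le \gamma(\rho)s+\rho\gamma(1)$ finishes the job. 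In contrast, you go directly for the target function $\Psi=(1-\tau\gamma(\rho))v-S-K_1(\rho)$ and exploit the key observation that the only potentially bad term $\tau u(\gamma(v)-\gamma(\rho))$ becomes non-positive precisely on the set $\{\Psi>0\}$, because $K_1(\rho)\ge\rho$ forces $v>\rho$ there. This is more elementary (no need for $\Gamma$) and shorter, but the differential inequality holds only on $\{\Psi>0\}$, so you must run a maximum-point contradiction argument rather than invoke the comparison principle in its off-the-shelf form; the boundary case in particular needs the observation that $\ml\Psi<0$ holds in a full neighbourhood of the positive maximum (by continuity of $\Psi$), which legitimises the use of Hopf's lemma there. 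One should also restrict first to $[0,T]$ with $T<T_{\mathrm{max}}$ to guarantee the maximum is attained, then let $T\nearrow T_{\mathrm{max}}$. The paper's detour through $\Gamma$ buys a cleaner application of the comparison principle and produces the $\rho$-independent inequality $v\le S+\tau\Gamma(v)+K_1$, but $\Gamma$ is in any case needed elsewhere in the paper, so the extra machinery is not wasted.
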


\begin{proof}  Introducing the parabolic operator
\begin{equation}
	\ml z \triangleq \tau \partial_t z + \mA z = \tau \partial_t z - \Delta z + \beta z\label{y4}
\end{equation} 
and using \eqref{cp2}, we first observe that
	\begin{align}
	\gamma(v)u & = \gamma(v)(\tau \partial_t v - \Delta v + \beta v) \nonumber \\
	& =\bigg(\tau \partial_t \Gamma(v) - \Delta \Gamma(v) + \beta\Gamma(v) \bigg) + \gamma'(v)|\nabla v|^2+\beta \left(v\gamma(v)-\Gamma(v)\right) \nonumber \\
	& = \ml \Gamma(v) + \gamma'(v) |\nabla v|^2 + \beta \left( v\gamma(v) - \Gamma(v) \right)\,. \label{Gamma}
	\end{align}
	Substituting the  identities \eqref{defS} and \eqref{var0} into  \eqref{cp2} and making use of \eqref{Gamma}, we deduce that
	\begin{align}
	\mathcal{L}v & = \tau \partial_t v - \Delta v + \beta v = u + n - n = \mathcal{L}S - \tau\partial_t S - n \nonumber \\
	& = \mathcal{L}S + \tau (u \gamma(v)+ n) - \beta\tau \mA^{-1}[u\gamma(v) + n] -  n \nonumber \\
	&=\ml \left(S+ \tau \Gamma(v)\right) + \tau \gamma'(v) |\nabla v|^2 + \beta\tau \left( v\gamma(v)-\Gamma(v) \right) -  \beta\tau \mA^{-1}[u\gamma(v) +  n] + (\tau-1)n. \label{compare0}
	\end{align}
Since $\gamma'(v) |\nabla v|^2$ is non-positive by \eqref{g1} and $\mA^{-1}[u\gamma(v) +  n]$ is non-negative by the comparison principle, it follows from \eqref{nup} (with $p=\infty$), \eqref{gvup}, \eqref{compare0}, and Lemma~\ref{lemGam} (with $s_0=v_*$) that
\begin{equation}
	\mathcal{L}v \le \ml \left(S+ \tau \Gamma(v)\right) + \beta\tau \gamma(v_*) + \tau \|n^{in}\|_\infty\,. \label{y5} 
\end{equation}
Now, in view of the assumption \eqref{ini} on the initial data, we may choose a positive constant $K_1\geq \left( \beta\tau \gamma(v_*) + \tau \|n^{in}\|_\infty  \right)/\beta$ such that 
\begin{equation*}
	K_1 \ge v^{in}(x) - S^{in}(x) - \tau \Gamma(v^{in})(x)\,, \qquad x\in\bar{\Omega}\,.
\end{equation*}
We then deduce from \eqref{y5} and the parabolic comparison principle that 
\begin{equation}
	v(t,x) \leq S(t,x) + \tau \Gamma(v(t,x)) + K_1\,, \qquad (t,x) \in [0,T_{\mathrm{max}})\times \bar{\Omega}\,. \label{y6}
	\end{equation}
Finally, pick $\rho\ge 1$ such that $0<\tau\gamma(\rho)<1$, the existence of which is granted by \eqref{g1}. Then, by \eqref{g1},
\begin{equation*}
	\Gamma(s) \le \left\{
	\begin{array}{cl}
		\rho \gamma(1) + (s-\rho) \gamma(\rho) \le \rho \gamma(1) + s \gamma(\rho)\,, & s\in [\rho,\infty)\,, \\
		& \\
		\rho\gamma(1) \le \rho \gamma(1) + s \gamma(\rho)\,, & s\in [v_*,\rho]\,, 
	\end{array}
\right.
\end{equation*}
so that $\tau\Gamma(v)\leq \tau\gamma(\rho)v + \tau\rho\gamma(1)$ in $[0,T_{\mathrm{max}})\times \bar{\Omega}$. Combining this estimate with \eqref{y6} completes the proof.
\end{proof}

Since $\lim\limits_{s\rightarrow\infty}\gamma(s)=0$ by \eqref{g1}, a tight control from $S$ on $v$ follows from Lemma~\ref{vbd} by picking $\rho$ sufficiently large, as reported in the next result.

\begin{proposition}\label{vbd1} Assume \eqref{g1} and consider $\varepsilon>0$. There is $L_1(\varepsilon)>0$ depending on $\Omega$, $\gamma$, $\tau$, $\beta$, the initial data, and $\varepsilon$ such that
\begin{equation}
	v\leq (1+\varepsilon)S + L_1(\varepsilon) \;\;\text{ in }\;\; [0,T_{\mathrm{max}})\times \bar{\Omega}\,. \label{y7a}
\end{equation} 
Moreover, there is $B>0$ depending on $\Omega$, $\gamma$, $\tau$, $\beta$, and the initial data such that, setting $\tilde{w} \triangleq B w$ and $\tilde{S} \triangleq B S$, 
\begin{equation}
v \le \tilde{w} \le \tilde{S}\;\;\text{ in }\;\; [0,T_{\mathrm{max}})\times \bar{\Omega}\,. \label{y7b}
\end{equation}
\end{proposition}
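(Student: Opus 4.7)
The plan is to obtain both assertions by combining Lemma~\ref{vbd} with the elementary facts collected in Lemma~\ref{keylem1} and the positive lower bound on $w$ recorded in \eqref{lowbsw}; no new comparison argument is needed at this stage. Indeed, Lemma~\ref{vbd} already furnishes a one-parameter family of bounds $v\le (1-\tau\gamma(\rho))^{-1}(S+K_1(\rho))$, and the key observation is that the prefactor of $S$ can be made arbitrarily close to $1$ by exploiting $\lim\limits_{s\to\infty}\gamma(s)=0$ from \eqref{g1}.

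For the first estimate \eqref{y7a}, I would fix $\varepsilon>0$ and pick $\rho=\rho(\varepsilon)\ge 1$ so large that $\tau\gamma(\rho)\le \varepsilon/(1+\varepsilon)<1$, which is possible by \eqref{g1}. Then $(1-\tau\gamma(\rho))^{-1}\le 1+\varepsilon$, and \eqref{vbound} immediately gives
\begin{equation*}
v(t,x)\le (1+\varepsilon)S(t,x)+(1+\varepsilon)K_1(\rho(\varepsilon)), \qquad (t,x)\in [0,T_{\mathrm{max}})\times\bar\Omega,
\end{equation*}
so that setting $L_1(\varepsilon)\triangleq (1+\varepsilon)K_1(\rho(\varepsilon))$ yields \eqref{y7a}.

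For the second estimate \eqref{y7b}, I would first apply \eqref{y7a} with $\varepsilon=1$ to obtain $v\le 2S+L_1(1)$ on $[0,T_{\mathrm{max}})\times\bar\Omega$. Inserting the comparison $S\le w+\|n^{in}\|_\infty/\beta$ from \eqref{y3}, this becomes
\begin{equation*}
v\le 2w+\frac{2\|n^{in}\|_\infty}{\beta}+L_1(1)\triangleq 2w+K'.
\end{equation*}
Using the strictly positive lower bound $w\ge w_*$ from \eqref{lowbsw} to absorb the additive constant $K'$ (via $K'\le (K'/w_*)w$), I obtain $v\le (2+K'/w_*)w$; defining $B\triangleq 2+K'/w_*$ then gives $v\le Bw=\tilde w$. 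The remaining inequality $\tilde w\le \tilde S$ reduces to $w\le S$, which was already established in \eqref{ptesta}.

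I do not foresee any genuine obstacle: the one subtle point is that Lemma~\ref{vbd} must be invoked at a value of $\rho$ that can be chosen as large as needed, which is precisely what the vanishing of $\gamma$ at infinity in \eqref{g1} guarantees. All dependencies of $L_1(\varepsilon)$ and $B$ are inherited from those of $K_1(\rho)$, $w_*$, and $\|n^{in}\|_\infty$, in agreement with the dependencies listed in the statement.
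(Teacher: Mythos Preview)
Your proof is correct and follows essentially the same approach as the paper. The only cosmetic difference is the order in which the additive constants are absorbed in the second part: the paper first absorbs $L_1(1)$ into $S$ using the lower bound $S\ge S_*$ from \eqref{lowbsw} and then passes to $w$ via \eqref{y3}, whereas you first pass to $w$ and then absorb the combined constant using $w\ge w_*$; both routes yield a valid choice of $B$.
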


\begin{proof}
By \eqref{g1}, there is $\rho_\varepsilon\ge 1$ such that $1<(1-\tau\gamma(\rho_\varepsilon)) (1+\varepsilon)$. It then follows from  \eqref{vbound} (with $\rho=\rho_\varepsilon$) that
\begin{equation*}
	v \le (1+\varepsilon) S + (1+\varepsilon) K_1(\rho_\varepsilon)\;\;\text{ in }\;\; [0,T_{\mathrm{max}})\times \bar{\Omega}\,,
\end{equation*} 
from which we deduce \eqref{y7a} with $L_1(\varepsilon) \triangleq (1+\varepsilon) K_1(\rho_\varepsilon)$. We next infer from \eqref{lowbsw}, \eqref{y3}, and \eqref{y7a} (with $\varepsilon=1$) that, in $[0,T_{\mathrm{max}})\times \bar{\Omega}$,
\begin{align*}
	v & \le 2 S + \frac{L_1(1)}{S_*} S \le \left( 2 + \frac{L_1(1)}{S_*} \right) \left( w + \frac{\|n^{in}\|_\infty}{\beta} \right) \\ 
	& \le \left( 2 + \frac{L_1(1)}{S_*} \right) \left( w + \frac{\|n^{in}\|_\infty}{\beta w^*} w \right) = B w \le B S 
\end{align*}
with
\begin{equation*}
	B \triangleq \left( 2 + \frac{L_1(1)}{S_*} \right) \left( 1 + \frac{\|n^{in}\|_\infty}{\beta w^*} \right)\,,
\end{equation*}
which completes the proof.
\end{proof}

Next, we aim to derive a reverse relation between $v$ and $S$, which will play a crucial role in Section~\ref{ubcs}. To begin with, we prove the following result.

\begin{lemma}\label{revs0}
Assume \eqref{g1}. There is $K_2>0$ depending on $\Omega$, $\gamma$, $\tau$, $\beta$, and the initial data such that 
\begin{equation*}
S\leq v + \beta\tau \mA^{-1}[\Gamma(v)] + K_2 \;\;\text{ in }\;\; [0,T_{\mathrm{max}})\times \bar{\Omega}\,.
\end{equation*}
\end{lemma}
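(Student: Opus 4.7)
The plan is to bound the auxiliary function $\psi \triangleq S - v - \beta\tau\mA^{-1}[\Gamma(v)]$ from above by a constant via the parabolic comparison principle applied to the operator $\ml z = \tau\partial_t z - \Delta z + \beta z$ with Neumann boundary conditions. The strategy is to compute $\ml\psi$ and exploit a decisive cancellation among the non-local terms.

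Concretely, I would first compute $\ml S$, $\ml v$, and $\ml\mA^{-1}[\Gamma(v)]$ separately. Combining $\mA S = u + n$ with \eqref{var0} yields
\begin{equation*}
\ml S = u - \tau u\gamma(v) + (1-\tau)n + \beta\tau\mA^{-1}[u\gamma(v) + n],
\end{equation*}
while \eqref{cp2} directly gives $\ml v = u$. For $\ml \mA^{-1}[\Gamma(v)]$, I would commute $\mA^{-1}$ with $\partial_t$, use $\partial_t\Gamma(v) = \gamma(v)\partial_t v$ together with \eqref{cp2} to replace $\tau\partial_t v$, and apply the chain rule $\gamma(v)\Delta v = \Delta\Gamma(v) - \gamma'(v)|\nabla v|^2$ (the Neumann condition for $\Gamma(v)$ being inherited from that for $v$), which produces
\begin{equation*}
\ml\mA^{-1}[\Gamma(v)] = \beta\mA^{-1}[\Gamma(v)] - \mA^{-1}[\gamma'(v)|\nabla v|^2] - \beta\mA^{-1}[v\gamma(v)] + \mA^{-1}[u\gamma(v)].
\end{equation*}
The key structural feature is that in forming $\ml\psi$, the non-local term $\beta\tau\mA^{-1}[u\gamma(v)]$ from $\ml S$ cancels exactly with $-\beta\tau\mA^{-1}[u\gamma(v)]$ inherited from $-\beta\tau\ml\mA^{-1}[\Gamma(v)]$, so that
\begin{equation*}
\ml\psi = -\tau u\gamma(v) + (1-\tau)n + \beta\tau\mA^{-1}[n] + \beta^2\tau\mA^{-1}[v\gamma(v) - \Gamma(v)] + \beta\tau\mA^{-1}[\gamma'(v)|\nabla v|^2].
\end{equation*}
Each term on the right is then easy to bound: the first and last are non-positive thanks to $u,\gamma \ge 0$ and $\gamma' \le 0$ (by \eqref{g1}) combined with the monotonicity of $\mA^{-1}$; the second and third are controlled by $\|n^{in}\|_\infty$ via \eqref{nup} and $\mA^{-1}[1] = 1/\beta$; and the fourth is dominated by $\beta\tau\gamma(v_*)$ by invoking Lemma~\ref{lemGam} with $s_0 = v_*$ together with the lower bound \eqref{e00}. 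Hence $\ml\psi \le C$ for some constant $C$ depending only on $\Omega$, $\gamma$, $\tau$, $\beta$, and the initial data.

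Finally, since $\psi(0,\cdot) = S^{in} - v^{in} - \beta\tau\mA^{-1}[\Gamma(v^{in})]$ is bounded on $\bar\Omega$ by \eqref{ini} and elliptic regularity, choosing $K_2 \triangleq \max\{C/\beta, \sup_{\bar\Omega}\psi(0,\cdot)\}$ provides a constant supersolution satisfying $\ml K_2 = \beta K_2 \ge C \ge \ml\psi$, $K_2 \ge \psi(0,\cdot)$, and trivial Neumann boundary conditions. The parabolic comparison principle then forces $\psi \le K_2$ throughout $[0,T_{\mathrm{max}})\times \bar\Omega$, which is exactly the desired inequality. The only real obstacle is spotting the corrective term $\beta\tau\mA^{-1}[\Gamma(v)]$ in the definition of $\psi$: it is calibrated precisely so that the problematic non-sign-definite non-local source $\beta\tau\mA^{-1}[u\gamma(v)]$ disappears from $\ml\psi$; once this algebraic cancellation is in place, the remaining sign arguments, the application of Lemma~\ref{lemGam}, and the comparison step are routine.
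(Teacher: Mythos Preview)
Your argument is correct and follows essentially the same route as the paper: the paper also rewrites $\ml v$ in terms of $\ml S$ and $\ml\mA^{-1}[\Gamma(v)]$ via \eqref{var0} and the identity \eqref{Gamma}, achieving the same cancellation of $\beta\tau\mA^{-1}[u\gamma(v)]$, then discards the non-positive terms $-\tau u\gamma(v)$ and $\beta\tau\mA^{-1}[\gamma'(v)|\nabla v|^2]$ and bounds the remainder with \eqref{nup}, Lemma~\ref{lemGam}, and the parabolic comparison principle, exactly as you do. The only cosmetic difference is the order of presentation (the paper substitutes into $\ml v = \ml S - \tau\partial_t S - n$ rather than computing $\ml\psi$ directly) and a slightly coarser bound $(1-\tau)n \le n$ in the paper's version.
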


\begin{proof}
Applying $\mA^{-1}$ to both sides of \eqref{Gamma}, we obtain
\begin{align*}
	\mA^{-1}[u\gamma(v)]& = \mA^{-1}\bigg[ \ml \Gamma(v) +  \gamma'(v) |\nabla v|^2 + \beta \left( v\gamma(v) - \Gamma(v) \right) \bigg]\\
	&=\ml \mA^{-1}\Gamma(v) + \mA^{-1}\bigg[ \gamma'(v)| \nabla v|^2 + \beta \left( v\gamma(v) - \Gamma(v) \right) \bigg]\,.
\end{align*}
We then infer from \eqref{cp2}, \eqref{defS}, \eqref{var0}, and the above identity that
\begin{align*}
	\ml v & = u+n-n = \ml S -\tau \partial_t S - n \\ 
	& = \ml S  + \tau u\gamma(v) + (\tau-1) n - \beta\tau \mA^{-1}[u\gamma(v)+n] \\
	& = \ml S  + \tau u\gamma(v) + (\tau-1) n - \beta\tau \ml\mA^{-1}[\Gamma(v)] \\
	& \qquad - \beta\tau \mA^{-1}\bigg[ n + \gamma'(v) |\nabla v|^2 + \beta (v\gamma(v) - \Gamma(v)) \bigg]\,,
\end{align*}
which implies, together with  the non-negativity of $u\gamma(v)$, $n$, and $-\mA^{-1}[\gamma'(v)|\nabla v|^2]$, that
\begin{equation*}
\ml S\leq \ml \left( v + \beta\tau \mA^{-1}[\Gamma(v)] \right) + \beta\tau \mA^{-1} \big[  n + \beta \left( v\gamma(v) - \Gamma(v) \right) \big] + n\,.
\end{equation*}
We further deduce from \eqref{nup} (with $p=\infty$), \eqref{e00}, \eqref{Gamma0} (with $s_0=v_*$), and the elliptic comparison principle that 
\begin{equation*}
\ml S \leq \ml \left( v + \beta\tau \mA^{-1}[\Gamma(v)] \right) + (1+\tau) \|n^{in}\|_{\infty} + \beta\tau \gamma(v_*)\,.
\end{equation*}
We now take $K_2\ge \left[ (1+\tau) \|n^{in}\|_{\infty} + \beta\tau \gamma(v_*) \right]/\beta$ such that
\begin{equation*}
	K_2 \ge S^{in}(x) -v^{in}(x) - \beta\tau \mA^{-1}[\Gamma(v^{in})](x)\,, \qquad x\in\bar{\Omega}\,,
\end{equation*}
and conclude with the help of the parabolic comparison principle that
\begin{equation*}
S(t,x)\leq v(t,x) + \beta\tau \mA^{-1}[\Gamma(v)](t,x) + K_2\,, \qquad (t,x)\in [0,T_{\mathrm{max}})\times \bar{\Omega}\,.
\end{equation*}
This completes the proof.
\end{proof}

After this preparation, we are in a position to derive an upper bound on $S$ and $w$ in terms of $v$, using additionally either assumption~\eqref{A3} or assumption~\eqref{gamma2} with $l>\frac{(N-4)_+}{N-2}$. We begin with the former. 
	
\begin{proposition}\label{Lemrev}
	Assume \eqref{g1} and \eqref{A3}. There is $A>0$ depending only on $\Omega$, $\gamma$, $\tau$, $\beta$, $b_0$, $K_0(v_*)$, and the initial data such that
\begin{equation*}
	v \ge A S \ge A w \;\;\text{ in }\;\; [0,T_{\mathrm{max}})\times \bar{\Omega}\,.
\end{equation*}
\end{proposition}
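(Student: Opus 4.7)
The plan is to push the comparison argument of Lemma~\ref{revs0} one step further by exploiting the additional decay assumption~\eqref{A3}, which was not used in the proof of Lemma~\ref{revs0}. Recall that Lemma~\ref{revs0} only gives $S\le v + \tilde\phi + K_2$ with $\tilde\phi \triangleq \beta\tau\mathcal{A}^{-1}[\Gamma(v)]$, and without further hypotheses $\tilde\phi$ can dominate $v$, so this inequality does not by itself yield the reverse control $S \le \lambda v + C$ that is needed to close the argument. The gain from \eqref{A3} is the sharpened pointwise inequality $v\gamma(v) - \Gamma(v) \le -b_0\Gamma(v) + K_0(v_*)$ for $v\ge v_*$, which improves on the bound $v\gamma(v) - \Gamma(v) \le \gamma(v_*)$ from Lemma~\ref{lemGam} that underlies Lemma~\ref{revs0}.

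First, I would retrace the derivation of Lemma~\ref{revs0}, substituting this sharper bound throughout. With $\phi = v + \tilde\phi$, the outcome is the refined parabolic differential inequality
\begin{equation*}
\mathcal{L}(S - \phi) + \beta b_0 \tilde\phi \le K_0',
\end{equation*}
where $K_0'>0$ depends only on $\beta, \tau, b_0, K_0(v_*)$, and $\|n^{in}\|_\infty$. The crucial new ingredient, compared to Lemma~\ref{revs0}, is the dissipative term $\beta b_0 \tilde\phi\ge 0$.

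Second, I would exploit this dissipation to produce an upper bound on $S$ of the form $\lambda v + C$. To that end, consider the auxiliary function
\begin{equation*}
\Psi \triangleq S - v - (1-b_0)\tilde\phi - \eta
\end{equation*}
for a sufficiently large constant $\eta>0$ to be chosen. Using identity~\eqref{Gamma} for $u\gamma(v)$, the explicit expression for $\mathcal{L}\tilde\phi$ that follows from applying $\mathcal{A}^{-1}$ to \eqref{Gamma}, and a \emph{second} application of \eqref{A3} to the term $\mathcal{A}^{-1}[v\gamma(v)]$ that arises in $\mathcal{L}\tilde\phi$, a direct (but careful) computation yields $\mathcal{L}\Psi \le K_0''$ for another constant $K_0''>0$. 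Choosing $\eta \ge K_0''/\beta$ together with $\eta$ large enough to also dominate $\sup_{\bar\Omega}\{S^{in} - v^{in} - (1-b_0)\tilde\phi^{in}\}$ (finite by the regularity \eqref{ini} of the initial data), the parabolic comparison principle --- all the ingredients satisfying homogeneous Neumann boundary conditions --- forces $\Psi \le 0$ on $[0,T_{\mathrm{max}})\times\bar\Omega$, that is $S \le v + (1-b_0)\tilde\phi + \eta$. Combining this with Lemma~\ref{lemGam} to absorb $(1-b_0)\tilde\phi$ into a small fraction of $v$ plus a constant, and then using the pointwise lower bounds $v \ge v_*$ and $S\ge S_*$ from \eqref{e00} and \eqref{lowbsw} to handle the region where $S$ is small, one obtains $v \ge A S$ for some $A\in(0,1]$ depending only on the quantities listed in the statement.

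Finally, the chain $v \ge AS \ge Aw$ is completed by the elementary observation $S \ge w$ recorded in \eqref{y3}.

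The main obstacle is the second step: the construction of the auxiliary function $\Psi$ and the delicate algebra needed to close the parabolic inequality with a constant right-hand side. The key point is that \eqref{A3} must be invoked twice --- once directly in $\mathcal{L}S$ and once through the evolution of $\tilde\phi$ --- and the choice of the coefficient $(1-b_0)$ in the definition of $\Psi$ is precisely what produces the required cancellations; any other coefficient would leave an uncontrolled positive term on the right-hand side and spoil the comparison. A subtlety worth flagging is the case $b_0=1$, in which $(1-b_0)\tilde\phi$ drops out entirely and the bound $S\le v + \eta$ follows at once, giving the conclusion with less work.
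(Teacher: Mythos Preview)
Your Step~2 does not close. Writing out $\mathcal{L}\tilde\phi$ explicitly gives
\[
\mathcal{L}\tilde\phi=\beta\tilde\phi+\beta\tau\,\mA^{-1}\!\big[u\gamma(v)-\gamma'(v)|\nabla v|^2-\beta v\gamma(v)\big],
\]
so that the exact identity for $\mathcal{L}\big(S-v-(1-b_0)\tilde\phi\big)$ contains, among bounded or non-positive terms, the contribution $+\,b_0\beta\tau\,\mA^{-1}[u\gamma(v)]$. This term is non-negative and scales like $w$ (hence like $S$); the only $u\gamma(v)$-term available in the exact expression for $\mathcal{L}(S-v-\tilde\phi)$ is the \emph{local} $-\tau u\gamma(v)$, which cannot cancel the \emph{nonlocal} $b_0\beta\tau\,\mA^{-1}[u\gamma(v)]$ pointwise. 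Thus $\mathcal{L}\Psi$ is not bounded above by a constant, and the parabolic comparison cannot yield $\Psi\le 0$. (The choice of coefficient $1-b_0$ does produce the cancellation you allude to for the $\gamma'(v)|\nabla v|^2$-terms, but not for this one.) A second issue lurks in Step~5: even granting $S\le v+(1-b_0)\tilde\phi+\eta$, Lemma~\ref{lemGam} bounds $\Gamma(v)$, not $\mA^{-1}[\Gamma(v)]$; since $\mA^{-1}[v]$ is not pointwise comparable to $v$, you cannot absorb $\tilde\phi$ into $\varepsilon v+C$.

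The paper avoids both obstacles by never computing $\mathcal{L}\tilde\phi$. It keeps Lemma~\ref{revs0} as is and instead estimates $\mA^{-1}[\Gamma(v)]$ by a purely \emph{elliptic} argument: use the already-established upper control $v\le\tilde S=BS$ (Proposition~\ref{vbd1}) to get $\mA^{-1}[\Gamma(v)]\le\mA^{-1}[\Gamma(\tilde S)]$, then compute $\mA[\Gamma(\tilde S)]$ directly and exploit $\mA[\tilde S]=B(u+n)\ge 0$, $\gamma'\le 0$, and \eqref{A3} at $s=\tilde S$ to obtain $\mA[\Gamma(\tilde S)]\ge\beta b_0\Gamma(\tilde S)-C$. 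Elliptic comparison gives $\mA^{-1}[\Gamma(\tilde S)]\le\Gamma(\tilde S)/(\beta b_0)+C$, hence $\tilde S\le Bv+C\Gamma(\tilde S)+C$; now Lemma~\ref{lemGam} applies to $\Gamma(\tilde S)$ itself (a local quantity) and closes the loop. The insight you are missing is to pass from $\Gamma(v)$ to $\Gamma(\tilde S)$ so that the elliptic operator $\mA$ acts on a function of $\tilde S$, for which $\mA[\tilde S]\ge 0$ is available.
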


\begin{proof}
On the one hand, it follows from \eqref{y7b}, see Proposition~\ref{vbd1}, and the monotonicity of $\Gamma$ that  $\Gamma(v) \le \Gamma(\tilde{S})$ in $[0,T_{\mathrm{max}})\times \bar{\Omega}$ and the elliptic comparison principle ensures that
\begin{equation}
	\mA^{-1}[\Gamma(v)] \le \mA^{-1}[\Gamma(\tilde{S})] \;\;\text{ in }\;\; [0,T_{\mathrm{max}})\times \bar{\Omega}\,. \label{y8}
\end{equation}
On the other hand, since $\tilde{S}=B S$ and $\gamma$, $u$, $n$, and $B$ are all non-negative, we infer from \eqref{g1} and \eqref{defS} that
\begin{align}
	\mA[\Gamma(\tilde{S})] & = - \Delta\Gamma(\tilde{S}) + \beta \Gamma(\tilde{S})  = \gamma(\tilde{S}) \mathcal{A}[\tilde{S}] - \gamma'(\tilde{S}) |\nabla\tilde{S}|^2 + \beta \left( \Gamma(\tilde{S}) - \tilde{S} \gamma(\tilde{S}) \right) \nonumber \\
	& \ge B\gamma(\tilde{S}) (u+n) + \beta \left( \Gamma(\tilde{S}) - \tilde{S} \gamma(\tilde{S}) \right) \ge \beta \left( \Gamma(\tilde{S}) - \tilde{S} \gamma(\tilde{S}) \right) \label{S0}\,.
\end{align}
Furthermore, since $\tilde{S}= B S \ge B S_*$ by \eqref{lowbsw}, assumption~\eqref{A3} (with $s_0=B S_*$) gives
\begin{equation*}
	\Gamma(\tilde{S}) - \tilde{S} \gamma(\tilde{S}) \ge b_0 \Gamma(\tilde{S}) - K_0(B S_*)\,.
\end{equation*}
Hence, recalling \eqref{S0}, 
\begin{equation*}
	\mA[\Gamma(\tilde{S})] \ge \beta b_0 \Gamma(\tilde{S}) - \beta K_0(B S_*) = \mA\left[ \beta b_0 \mA^{-1}[\Gamma(\tilde{S})] - K_0(B S_*) \right]\,,
\end{equation*}
from which we deduce that
\begin{equation}
	\Gamma(\tilde{S}) \ge \beta b_0 \mA^{-1}[\Gamma(\tilde{S})] - K_0(B S_*) \;\;\text{ in }\;\; [0,T_{\mathrm{max}})\times \bar{\Omega}\,, \label{y9}
\end{equation}
by the elliptic comparison principle. It then readily follows from \eqref{y8} and \eqref{y9} that
\begin{equation}
\mA^{-1}[\Gamma(v)]\leq \mA^{-1}[\Gamma(\tilde{S})] \le \frac{\Gamma(\tilde{S})}{\beta b_0} + \frac{K_0(B S_*)}{\beta b_0} \;\;\text{ in }\;\; [0,T_{\mathrm{max}})\times \bar{\Omega}\,. \label{S1}
\end{equation}
Now combining \eqref{S1} and Lemma~\ref{revs0},  we conclude that
\begin{equation*}
\tilde{S}\leq B v+ \frac{\tau B}{b_0} \Gamma(\tilde{S}) + \frac{\tau B K_0(B S_*)}{b_0} + BK_2 \;\;\text{ in }\;\; [0,T_{\mathrm{max}})\times \bar{\Omega}\,.
\end{equation*}
Owing to \eqref{lowbsw}, we may now use Lemma~\ref{lemGam} (with $s_0=B S_*$ and $\varepsilon=b_0/(2\tau B)$) to obtain  
\begin{equation*}
	\tilde{S}\leq B v+\frac{\tilde{S}}{2} + \frac{\tau B}{b_0} C_{\ref{cst3}}\left( \frac{b_0}{2\tau B} \right) + \frac{\tau B K_0(B S_*)}{b_0} + B K_2 \;\;\text{ in }\;\; [0,T_{\mathrm{max}})\times \bar{\Omega}\,.
\end{equation*}
Consequently, introducing
\begin{equation*}
	\frac{1}{A} \triangleq 2 + \frac{1}{v_*} \left[ \frac{2\tau }{b_0} C_{\ref{cst3}}\left( \frac{b_0}{2\tau B} \right) + \frac{2\tau K_0(B S_*)}{b_0}  + 2 K_2 \right]\,, 
\end{equation*}
we deduce from \eqref{e00} and the above inequality that
\begin{equation*}
	\tilde{S}\leq 2 B v + \left[ \frac{2\tau B}{b_0} C_{\ref{cst3}}\left( \frac{b_0}{2\tau B} \right) + \frac{2\tau B K_0(B S_*)}{b_0} + 2 B K_2 \right] \frac{v}{v_*} \le \frac{Bv}{A}\;\;\text{ in }\;\; [0,T_{\mathrm{max}})\times \bar{\Omega}\,.
\end{equation*}
Recalling that $w\le S$ by \eqref{y3} and that $\tilde{S}=BS$, we have thus established that $A w \le A S \le v$ in $[0,T_{\mathrm{max}})\times \bar{\Omega}$ as claimed.
\end{proof}

We finally establish a similar result when $\gamma$ satisfies \eqref{g1} and, either $N\leq3$, or $N\ge 4$ and the parameter $l$ in \eqref{gamma2} satisfies $l>\frac{N-4}{N-2}$. We emphasize that, in the most biologically relevant case $N\leq 3$, the reverse estimate holds without additionally assuming \eqref{gamma2} or \eqref{A3}.
	
\begin{proposition}\label{Lemrevbis}
	Assume \eqref{g1} and that, either $N\leq3$, or $N\ge 4$ and the parameter $l$ in \eqref{gamma2} satisfies $l>\frac{N-4}{N-2}$. There is $A>0$ depending only on $\Omega$, $\gamma$, $\tau$, $\beta$, and the initial data such that
	\begin{equation*}
		v \ge A S \ge A w \;\;\text{ in }\;\; [0,T_{\mathrm{max}})\times \bar{\Omega}\,.
	\end{equation*}
\end{proposition}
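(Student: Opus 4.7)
The plan is to follow the same broad strategy as in Proposition~\ref{Lemrev}: starting from the identity
\begin{equation*}
S \le v + \beta\tau \mA^{-1}[\Gamma(v)] + K_2
\end{equation*}
supplied by Lemma~\ref{revs0}, I would establish that $\mA^{-1}[\Gamma(v)]$ is uniformly bounded in $L^\infty(\Omega)$ by a constant depending only on the data; once this is done, the two-sided estimate $v \ge AS \ge Aw$ follows exactly as at the end of the proof of Proposition~\ref{Lemrev}, combining $S \le v + C'$ with the strictly positive lower bounds $v \ge v_*$ and $S \ge S_*$ from~\eqref{e00} and~\eqref{lowbsw}, splitting cases according to whether $S \ge 2 C'$ or not, and then using~\eqref{y3} to pass from $S$ to $w$. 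The essential difference with Proposition~\ref{Lemrev} is that the bound on $\mA^{-1}[\Gamma(v)]$ is now derived via elliptic regularity and the mass conservation~\eqref{e0}, rather than by invoking~\eqref{A3}.

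For the case $N \le 3$, I would first apply Lemma~\ref{lemGam} (with $\varepsilon = 1$) to obtain $\Gamma(v) \le v + C$ and then, using the upper control $v \le B S$ of Proposition~\ref{vbd1}, deduce that $\mA^{-1}[\Gamma(v)] \le B\,\mA^{-2}[u+n] + C/\beta$. Since $\|u+n\|_1 \equiv \|u^{in}+n^{in}\|_1$ is conserved, a first application of Lemma~\ref{lm2} puts $\mA^{-1}[u+n]$ in $L^q(\Omega)$ for every $q < N/(N-2)_+$, and a second application, supplemented by the Sobolev embedding $W^{2,p}(\Omega) \hookrightarrow L^\infty(\Omega)$ for $p > N/2$, bounds $\mA^{-2}[u+n]$ in $L^\infty(\Omega)$ uniformly in time, provided that some admissible $q$ lies in the interval $(N/2, N/(N-2)_+)$. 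This interval is non-empty exactly when $N \le 3$, which is precisely the range at stake.

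For $N \ge 4$ with $l > (N-4)/(N-2)$, I would instead exploit the polynomial decay of $\gamma$ furnished by~\eqref{gamma2}. When $l \in ((N-4)/(N-2), 1)$, Lemma~\ref{upG} yields $\Gamma(v) \le C(1 + v^{1-l})$, and the bound $v \le B S$ then gives $\Gamma(v) \le C'(1 + S^{1-l})$. The assumption $l > (N-4)/(N-2)$ is equivalent to $N(1-l)/2 < N/(N-2)$, so $q \in (N(1-l)/2, N/(N-2))$ can be chosen; Lemma~\ref{lm2} bounds $\|S\|_q$ in terms of $\|u^{in}+n^{in}\|_1$, whence $\|S^{1-l}\|_{q/(1-l)}$ is uniformly bounded with $q/(1-l) > N/2$, and the standard elliptic regularity $\mA^{-1}\colon L^p(\Omega) \to L^\infty(\Omega)$ valid for $p > N/2$ finishes the step. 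The limit cases are simpler variants: for $l = 1$, Lemma~\ref{upG} only gives $\Gamma(v) \le C\log v$, which I would dominate by $C_\varepsilon + C_\varepsilon v^\varepsilon$ for a sufficiently small $\varepsilon > 0$ and treat as above; for $l > 1$, $\Gamma$ is itself bounded on $[v_*, \infty)$ and $\mA^{-1}[\Gamma(v)]$ is then trivially controlled.

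The main obstacle I anticipate lies in the subcase $N \ge 4$ with $l \in ((N-4)/(N-2), 1)$: there the polynomial exponent produced by Lemma~\ref{upG}, the maximal integrability of $S$ allowed by Lemma~\ref{lm2}, and the Sobolev threshold $N/2$ demanded by elliptic regularity must interlock exactly, and it is this balancing constraint that forces the sharp lower bound $l > (N-4)/(N-2)$ appearing in the hypothesis.
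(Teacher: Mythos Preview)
Your proposal is correct and follows essentially the same route as the paper: in both arguments the key step is to bound $\mA^{-1}[\Gamma(v)]$ in $L^\infty(\Omega)$ by combining the growth control on $\Gamma$ (linear for $N\le 3$, sublinear via Lemma~\ref{upG} for $N\ge 4$) with an $L^q$-bound on $v$ (or on $S$) coming from Lemma~\ref{lm2} and the conserved mass, and then to use elliptic regularity with the Sobolev threshold $p>N/2$. The only cosmetic differences are that the paper works directly with $v$ (implicitly using $v\le BS$ to invoke Lemma~\ref{lm2}) while you pass explicitly through $S$, and that the paper concludes via the one-line estimate $S\le v+C'\le (1+C'/v_*)v$ rather than a case-split on the size of $S$; your identification of the constraint $l>(N-4)/(N-2)$ as exactly the condition making the interval $(N(1-l)/2,\,N/(N-2))$ non-empty matches the paper's reasoning precisely.
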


\begin{proof}
We begin with the  case $N\leq3$. Owing to the monotonicity and non-negativity \eqref{g1} of $\gamma$, there holds $\Gamma(s) \le \gamma(1) s$ for all $s>0$ and we infer from the elliptic comparison principle that $\mA^{-1}[\Gamma(v)] \le \gamma(1) \mA^{-1}[v]$ in $[0,T_{\mathrm{max}})\times \bar{\Omega}$. Then, 
\begin{equation*}
	\|\mA^{-1}[\Gamma(v)]\|_\infty \le \gamma(1) \|\mA^{-1}[v]\|_\infty \le C \|\mA^{-1} [v]\|_{H^2} \le C \|v\|_2 \le C \|u\|_1 \le C 
\end{equation*}
in $[0,T_{\mathrm{max}})\times \bar{\Omega}$, thanks to \eqref{uL1}, Lemma~\ref{lm2}, and the continuous embedding of $H^2(\Omega)$ in $L^\infty(\Omega)$. Here and throughout the proof, $C$ denotes positive constants depending only on $\Omega$, $\gamma$, $\tau$, $\beta$, and the initial data. Together with \eqref{e00} and Lemma~\ref{revs0}, this estimate implies that 
\begin{equation*}
	S\le v + C + K_2 \le \left( 1 + \frac{C+K_2}{v_*} \right) v \;\;\text{ in }\;\; [0,T_{\mathrm{max}})\times \bar{\Omega}\,,
\end{equation*}
which completes the proof of Proposition~\ref{Lemrevbis} for $N\leq3$.

We next turn to $N\ge 4$ when $\gamma$ additionally satisfies \eqref{gamma2} with $l>\frac{N-4}{N-2}$ and handle in a different way the cases $l>1$, $l=1$, and $l\in \left( \frac{N-4}{N-2} , 1 \right)$.
\begin{itemize}
	\item [-] $l>1$: Since $|\Gamma(v)|\le C$ in $[0,T_{\mathrm{max}})\times \bar{\Omega}$ by \eqref{g1}, \eqref{e00}, and Lemma~\ref{upG}, Proposition~\ref{Lemrevbis} readily follows from the elliptic comparison principle and Lemma~\ref{revs0} in that case.
	\item [-] $l\in \left( \frac{N-4}{N-2} , 1 \right)$: We first note that the lower bound on $l$ guarantees that we can find $q\in (1,N/(N-2))$ satisfying also $q>N(1-l)/2$. Next, by \eqref{g1}, \eqref{e00}, and Lemma~\ref{upG}, the function $\Gamma$ has a sublinear growth at infinity and $|\Gamma(v)|\le C v^{1-l}$ in $[0,T_{\mathrm{max}})\times \bar{\Omega}$. Using again \eqref{uL1}, Lemma~\ref{lm2}, the elliptic comparison principle, and the continuous embedding of $W^{2,q/(1-l)}(\Omega)$ in $L^\infty(\Omega)$, we find
	\begin{align*}
			\|\mA^{-1}[\Gamma(v)]\|_\infty & \le C \|\mA^{-1}\big[ v^{1-l} \big]\|_\infty \le C \|\mA^{-1}\big[ v^{1-l} \big]\|_{W^{2,q/(1-l)}} \\
			& \le C \|v^{1-l}\|_{q/(1-l)} = C \|v\|_{q}^{1-l}\le C \|u\|_1^{1-l} \le C 
	\end{align*}
	in $[0,T_{\mathrm{max}})\times \bar{\Omega}$. We then proceed as above to complete the proof of Proposition~\ref{Lemrevbis} in that case.
	\item [-] $l=1$: We fix $\delta\in (0,2/(N-2))$ and $q\in (1,N/(N-2))$ such that $q>\delta N/2$. Since $|\Gamma(v)|\le C v^\delta$ in $[0,T_{\mathrm{max}})\times \bar{\Omega}$ by \eqref{g1}, \eqref{e00}, and Lemma~\ref{upG}, we argue as in the previous case (with $\delta$ instead of $1-l$) to complete the proof of Proposition~\ref{Lemrevbis}.
\end{itemize}
\end{proof}

\section{Global Existence of Classical Solutions}\label{gecs}

In this section, we consider the existence of global classical solutions under the assumptions of Theorem~\ref{TH1}. We first prove that $v$ is  H\"older continuous with respect to both $t$ and $x$, which allows us to apply the theory developed by Amann in \cite{Aman1990, Aman1995} for non-autonomous linear parabolic equations to derive a $W^{2\theta,p}$-estimate for $w$ with any $p>N$ and $\theta\in((N+p)/2p,1)$. In turn, we will obtain a $W^{1,\infty}$-estimate for $v$, which finally gives rise to $L^p$-estimates for $u$ with any $p>1$ by standard energy estimates.

From now on, we assume that $\gamma$ and the initial data satisfy \eqref{g1} and \eqref{ini}, respectively, and that $(u,v,n)$ is the corresponding solution to \eqref{cp} provided by Theorem~\ref{local} and defined on $[0,T_{\mathrm{max}})$. We fix $t_*\in (0,T_{\mathrm{max}})$ and infer from Theorem~\ref{local} that 
\begin{equation}
	M(t_*) \triangleq \|u(t_*)\|_{C^2(\bar{\Omega})} +  \|v(t_*)\|_{C^2(\bar{\Omega})} +\|n(t_*)\|_{C^2(\bar{\Omega})} < \infty\,. \label{y10}
\end{equation}

Throughout this section, $C$ and $(C_i)_{i\ge 5}$ denote positive constants depending only on $\Omega$, $\gamma$, $f$, $\tau$, $\beta$, the initial data, $t_*$, and $M(t_*)$ introduced in \eqref{y10}. Dependence upon additional parameters will be indicated explicitly.

\subsection{H\"older estimates for $v$}\label{sec.ir1}

Introducing 
\begin{equation}
	\varphi\triangleq\mA^{-1}[u\gamma(v)]\,, \label{defvarphi}
\end{equation}
and
\begin{equation}\label{defpsi}
	\psi\triangleq \mA^{-1}[uf(n)],
\end{equation}
we observe from \eqref{defw} and the key identity \eqref{var1} that $w$ solves the initial boundary value problem
	\begin{subequations}\label{cpv}
		\begin{align}
			\partial_t w + u\gamma(v) & = \beta\varphi+\psi\,, \qquad (t,x)\in (0,T_{\mathrm{max}})\times\Omega\,, \label{cpv1} \\
			- \Delta w + \beta w & = u \,, \qquad (t,x)\in (0,T_{\mathrm{max}})\times\Omega\,, \label{cpv1.5} \\
			\nabla w \cdot \nu & = 0,, \qquad (t,x)\in (0,T_{\mathrm{max}})\times\partial\Omega\,, \label{cpv2} \\
			w(0)  & = w^{in}\,, \qquad x\in\Omega\,. \label{cpv3}
		\end{align}
	\end{subequations}
We now fix $T\in(0,T_{\mathrm{max}})$ and set $J=[0,T]$. By \eqref{e00}, \eqref{lowbsw},  Lemma~\ref{keylem1}, and Lemma~\ref{vbd}, there are positive constants $v^*(T)$ and $w^*(T)$ such that
\begin{align}
	0 < v_* \le v(t,x) & \le v^*(T)\;\;\text{ in }\;\; J\times \bar{\Omega}\,, \label{bv}\\
	0 < w_* \le w(t,x) & \le w^*(T)\;\;\text{ in }\;\; J\times \bar{\Omega}\,, \label{bw}
\end{align}
and we infer from \eqref{gvup}, \eqref{fup}, \eqref{bw}, and the elliptic comparison principle that
\begin{equation}
	0\leq \beta\varphi+\psi = \mA^{-1}[\beta u\gamma(v)+uf(n)]\leq \big( \beta\gamma^* + f^* \big) w \leq \big(\beta \gamma^* + f^* \big) w^*(T) \;\;\text{ in }\;\; J\times\bar{\Omega}\,.\label{phib}
\end{equation}

We now proceed along the lines of \cite[Chapter~V, Section~7]{LSU1968} and \cite[Section~5]{Aman1989} to derive a local energy bound for $v$.
\refstepcounter{NumConst}\label{cst5}

\begin{lemma}\label{lem.impreg1}  Let $\delta\in (0,1)$. There is $C_{\ref{cst5}}(T)>0$ such that, if $\vartheta\in C^\infty(J\times\bar{\Omega})$, $0\le \vartheta\le 1$, $\sigma\in\{-1,1\}$, and $h\in\mathbb{R}$ are such that
	\begin{equation}
		\sigma w(t,x) - h \le \delta\,, \qquad (t,x)\in \mathrm{supp}\,\vartheta\,, \label{smallc1}
	\end{equation}
	then
	\begin{align*}
		& \int_\Omega \vartheta^2 (\sigma w(t) - h)_+^2\ \mathrm{d}x + \frac{\gamma_*(T)}{2} \int_{t_0}^t \int_\Omega \vartheta^2 |\nabla (\sigma w(s) - h)_+|^2\ \mathrm{d}x\mathrm{d}s \\
		& \qquad  \le \int_\Omega \vartheta^2 (\sigma w(t_0) - h)_+^2\ \mathrm{d}x + C_{\ref{cst5}}(T) \int_{t_0}^t \int_\Omega  \left( |\nabla\vartheta|^2 + \vartheta|\partial_t\vartheta| \right) (\sigma w(\tau) - h)_+^2\ \mathrm{d}x\mathrm{d}s \\
		& \qquad\quad + C_{\ref{cst5}}(T) \int_{t_0}^t  \int_{A_{h,\vartheta,\sigma}(s)} \vartheta\ \mathrm{d}x \mathrm{d}s 
	\end{align*}
	for $0\le t_0\le t\le T$, where 
	\begin{equation*}
		A_{h,\vartheta,\sigma}(s)\triangleq \left\{ x\in \Omega\ :\ \sigma w(s,x) > h \right\}\,, \qquad s\in [0,T]\,.
	\end{equation*}
\end{lemma}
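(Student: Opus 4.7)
My approach is to test the elliptic relation \eqref{cpv1.5} and the evolution equation \eqref{cpv1} for $w$ \emph{separately}, both with the common test function $\sigma\vartheta^2(\sigma w-h)_+$. This avoids the naive substitution $u = -\Delta w + \beta w$ into \eqref{cpv1}, which would produce a quasilinear equation whose integration by parts generates a drift term $\int\gamma'(v)\vartheta^2 k_+\,\nabla v\cdot\nabla k_+$; this term cannot be controlled at the regularity available here, whereas the decoupled strategy never differentiates $\gamma(v)$.

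Setting $k = \sigma w - h$, integration by parts in the elliptic relation, using the Neumann boundary condition on $w$ together with $\sigma\nabla w = \nabla k_+$ on $\{k > 0\}$, yields
\begin{equation*}
\int_\Omega u\sigma\vartheta^2 k_+\,\mathrm{d}x = \int_\Omega \vartheta^2|\nabla k_+|^2\,\mathrm{d}x + 2\int_\Omega \vartheta k_+\nabla\vartheta\cdot\nabla k_+\,\mathrm{d}x + \beta\int_\Omega w\sigma\vartheta^2 k_+\,\mathrm{d}x,
\end{equation*}
while testing \eqref{cpv1} and using $\sigma\partial_t w = \partial_t k$ gives
\begin{equation*}
\frac{1}{2}\frac{\mathrm{d}}{\mathrm{d}t}\int_\Omega\vartheta^2 k_+^2\,\mathrm{d}x + \int_\Omega u\gamma(v)\sigma\vartheta^2 k_+\,\mathrm{d}x = \int_\Omega (\beta\varphi+\psi)\sigma\vartheta^2 k_+\,\mathrm{d}x + \int_\Omega \vartheta\,\partial_t\vartheta\,k_+^2\,\mathrm{d}x.
\end{equation*}
For $\sigma = 1$, the lower bound $\gamma(v)\ge \gamma_*(T)\triangleq\gamma(v^*(T))>0$, which follows from \eqref{bv} and the monotonicity of $\gamma$, combined with the non-negativity of $u\vartheta^2 k_+$, yields $\int u\gamma(v)\vartheta^2 k_+ \ge \gamma_*(T)\int u\vartheta^2 k_+$; substituting the elliptic identity brings $\gamma_*(T)\int\vartheta^2|\nabla k_+|^2$ onto the left-hand side of the parabolic one. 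For $\sigma = -1$, the relation $\int u\sigma\vartheta^2 k_+ = -\int u\vartheta^2 k_+\le 0$ and the elliptic identity bound $\int\vartheta^2|\nabla k_+|^2$ directly by cross terms, while the upper bound $\gamma(v)\le\gamma^*$ from \eqref{gvup} allows us to handle the offending term $\int u\gamma(v)\vartheta^2 k_+$ in the parabolic identity by feeding the elliptic identity back into it.

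In both cases, the cross term $\int\vartheta k_+\nabla\vartheta\cdot\nabla k_+$ is absorbed into half of $\int\vartheta^2|\nabla k_+|^2$ via Young's inequality, at the cost of a multiple of $\int k_+^2|\nabla\vartheta|^2$; the zeroth-order contribution $\beta\int w\sigma\vartheta^2 k_+$ is either discarded thanks to the favourable sign $w\ge 0$ (when $\sigma=1$) or bounded using $|w|\le w^*(T)$ from \eqref{bw} (when $\sigma=-1$); and the source $\int(\beta\varphi+\psi)\sigma\vartheta^2 k_+$ is estimated using the uniform bound \eqref{phib}. The smallness assumption $k_+\le\delta$ on $\mathrm{supp}\,\vartheta$, combined with $\vartheta^2\le\vartheta$ (since $0\le\vartheta\le 1$), then reduces these bounded source contributions to a constant multiple of $\int_{A_{h,\vartheta,\sigma}}\vartheta$. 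Integrating in time over $[t_0,t]$ produces the claimed inequality, with $C_{\ref{cst5}}(T)$ depending on $\beta$, $\gamma^*$, $\gamma_*(T)$, $w^*(T)$, and the constant in \eqref{phib}.
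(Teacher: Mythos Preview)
Your proof is correct and follows essentially the same route as the paper's own argument. What you describe as ``testing the elliptic relation and the evolution equation separately'' is exactly what the paper does: it tests \eqref{cpv1} against $\sigma\vartheta^2(\sigma w-h)_+$, then in the resulting term $-\sigma\int\vartheta^2(\sigma w-h)_+ u\gamma(v)\,\rd x$ first bounds $\gamma(v)$ pointwise (below by $\gamma(v^*(T))$ when $\sigma=1$, above by $\gamma^*$ when $\sigma=-1$), and only afterwards substitutes $u=\beta w-\Delta w$ and integrates by parts---which is precisely your ``elliptic identity''. The organisation of the case $\sigma=-1$ is slightly different in wording (you speak of ``feeding the elliptic identity back into'' the parabolic one, the paper phrases it as a direct substitution), but the computations and the resulting constants are identical.
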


\begin{proof}
By \eqref{cpv},
\begin{align}
	\frac{1}{2} \frac{d}{dt} \int_\Omega \vartheta^2 ( \sigma w - h)_+^2\ \mathrm{d}x &  =  \sigma \int_\Omega \vartheta^2 (\sigma w - h)_+ \partial_t w \ \mathrm{d}x + \int_\Omega  (\sigma w - h)_+^2 \vartheta \partial_t\vartheta\ \mathrm{d}x \nonumber \\
	& = - \sigma \int_\Omega \vartheta^2 (\sigma w - h)_+ u \gamma(v) \rd x + \sigma \int_\Omega \vartheta^2 (\sigma w - h)_+ (\beta\varphi+\psi)\ \mathrm{d}x \label{Z1}\\
	& \qquad + \int_\Omega (\sigma w - h)_+^2 \vartheta \partial_t\vartheta \mathrm{d} x\,. \nonumber
\end{align}
Either $\sigma=1$ and it follows from \eqref{g1}, \eqref{defw}, \eqref{cpv1.5}, \eqref{bv}, and the non-negativity of $u$ and $w$ that
\begin{align*}
	- \sigma \int_\Omega \vartheta^2 (\sigma w - h)_+ u \gamma(v) \rd x & \le - \gamma(v^*(T)) \int_\Omega \vartheta^2 (w - h)_+ u\ \mathrm{d}x \\
	& = - \gamma(v^*(T)) \int_\Omega \vartheta^2 (w - h)_+ (\beta w - \Delta w)\ \mathrm{d}x \\
	& \le - \gamma(v^*(T)) \int_\Omega \nabla \left[ \vartheta^2 (w - h)_+ \right]\cdot \nabla w\ \mathrm{d}x \\
	& \le - \gamma(v^*(T)) \int_\Omega \vartheta^2 |\nabla (w - h)_+ |^2\ \mathrm{d}x \\
	& \qquad + 2 \gamma(v^*(T)) \int_\Omega \vartheta |\nabla\vartheta| (w-h)_+ |\nabla (w-h)_+|\ \mathrm{d}x\,.
\end{align*}
Or $\sigma=-1$ and we infer from \eqref{gvup}, \eqref{defw}, \eqref{cpv1.5}, and \eqref{bw} that
\begin{align*}
	- \sigma \int_\Omega \vartheta^2 (\sigma w - h)_+ u \gamma(v) \rd x & \le  \gamma^* \int_\Omega \vartheta^2 (-w - h)_+ u\ \mathrm{d}x \\
	& = \gamma^* \int_\Omega \vartheta^2 (-w - h)_+ (\beta w - \Delta w)\ \mathrm{d}x \\
	& \le \beta\gamma^* w^*(T) \int_\Omega \vartheta^2 (-w - h)_+\ \mathrm{d}x \\	
	& \qquad + \gamma^* \int_\Omega \nabla \left[ \vartheta^2 (-w - h)_+ \right]\cdot \nabla w\ \mathrm{d}x \\
	& \le - \gamma^* \int_\Omega \vartheta^2 |\nabla(-w-h)_+|^2\ \mathrm{d}x \\
	& \qquad + 2 \gamma^* \int_\Omega \vartheta |\nabla\vartheta| (-w-h)_+ |\nabla (-w-h)_+|\ \mathrm{d}x \\
	& \qquad + C(T) \int_\Omega \vartheta^2 (-w - h)_+ \ \mathrm{d}x\,.
\end{align*}
Recalling that $\gamma^*=\gamma(v_*)\ge \gamma(v^*(T))$ by \eqref{g1}, we have thus shown  that
\begin{align*}
	- \sigma \int_\Omega \vartheta^2 (\sigma w - h)_+ u \gamma(v) \rd x & \le - \gamma(v^*(T)) \int_\Omega \vartheta^2 |\nabla(\sigma w-h)_+|^2\ \mathrm{d}x \\
	& \qquad + C(T) \int_\Omega \vartheta |\nabla\vartheta| (\sigma w-h)_+ |\nabla (\sigma w-h)_+|\ \mathrm{d}x \\
	& \qquad + C(T) \int_\Omega \vartheta^2 (\sigma w - h)_+\ \mathrm{d}x\,.
\end{align*}
Inserting this estimate in \eqref{Z1} and using \eqref{phib} and Young's inequality lead us to
\begin{align*}
	\frac{1}{2} \frac{d}{dt} \int_\Omega \vartheta^2 ( \sigma w - h)_+^2\ \mathrm{d}x & \le - \gamma(v^*(T)) \int_\Omega \vartheta^2 |\nabla(\sigma w-h)_+|^2\ \mathrm{d}x \\
	& \qquad + \frac{\gamma(v^*(T))}{2} \int_\Omega \vartheta^2 |\nabla(\sigma w-h)_+|^2\ \mathrm{d}x +  C(T) \int_\Omega |\nabla\vartheta|^2 (\sigma w-h)_+^2\ \mathrm{d}x \\
	& \qquad + C(T) \int_\Omega \vartheta^2 (\sigma w - h)_+\ \mathrm{d}x + \big( \beta \gamma^* + f^* \big) w^*(T) \int_\Omega \vartheta^2 (\sigma w - h)_+\ \mathrm{d}x \\
	& \qquad + \int_\Omega (\sigma w - h)_+^2 \vartheta |\partial_t\vartheta| \mathrm{d} x \\
	& \le -\frac{\gamma(v^*(T))}{2} \int_\Omega \vartheta^2 |\nabla(\sigma w-h)_+|^2\ \mathrm{d}x \\
	& \qquad +  C(T) \int_\Omega \left( |\nabla\vartheta|^2 + \vartheta |\partial_t\vartheta| \right) (\sigma w-h)_+^2\ \mathrm{d}x + C(T) \int_\Omega \vartheta^2 (\sigma w - h)_+\ \mathrm{d}x\,.
\end{align*}
We now use \eqref{smallc1} to estimate from above the last term by
\begin{equation*}
	C(T) \int_\Omega \vartheta^2 (\sigma w - h)_+\ \mathrm{d}x \le \delta \int_{A_{h,\vartheta,\sigma}} \vartheta^2\ \mathrm{d}x \le \int_{A_{h,\vartheta,\sigma}} \vartheta\ \mathrm{d}x\,,
\end{equation*}
and integrate the above differential inequality over $(t_0,t)$ to complete the proof.
\end{proof}

We are now in a position to apply \cite[Chapter~II, Theorem~8.2]{LSU1968} to obtain a H\"older estimate for $w$.

\begin{corollary}\label{regularw}
	 There is $\alpha_T\in (0,1)$ depending on $\Omega$, $\gamma$, $f$, $\tau$, $\beta$, $T$, and the initial data such that $w\in C^{\alpha_T}(J\times\bar{\Omega})$.
\end{corollary}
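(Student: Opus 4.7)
The plan is to recognize $w$ as an element of a parabolic De~Giorgi-type function class and invoke the classical H\"older regularity theorem \cite[Ch.~II, Thm.~8.2]{LSU1968}. The energy inequality provided by Lemma~\ref{lem.impreg1} is tailored to be precisely the Caccioppoli-type inequality that defines such a class, so the bulk of the work has already been carried out. What remains is to set up the structural parameters correctly, verify the smallness hypothesis \eqref{smallc1} globally, and handle the parabolic boundary.

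\textbf{Step 1: setting up the level set inequality.} From \eqref{bw} one has a uniform bound $|w|\le M \triangleq w^*(T)$ on $J\times\bar\Omega$. Fix $\delta\in(0,1)$ and set $h_0 \triangleq M-\delta$. For any admissible cutoff $\vartheta\in C^\infty(J\times\bar\Omega)$ with $0\le\vartheta\le1$, any $\sigma\in\{-1,1\}$, and any $h\ge h_0$, the smallness assumption \eqref{smallc1} is automatically satisfied on $\mathrm{supp}\,\vartheta$ because $\sigma w-h\le M-h\le\delta$. Hence Lemma~\ref{lem.impreg1} supplies the desired Caccioppoli estimate on every parabolic sub-cylinder of $J\times\Omega$, with ellipticity constant $\gamma_*(T)/2$ and structural constant $C_{\ref{cst5}}(T)$, the right-hand side term $\int\!\!\int_{A_{h,\vartheta,\sigma}}\vartheta\,\rd x\rd s$ playing the role of an integrable lower-order forcing whose exponents are compatible with the parabolic Sobolev embedding in $N+1$ variables.

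\textbf{Step 2: applying the LSU theorem.} Plugging standard nested parabolic cylinders $Q_{\rho}(t_0,x_0)\subset\subset (0,T]\times\Omega$ and the associated cutoffs into the inequality of Step~1 places $w$ in the function class for which \cite[Ch.~II, Thm.~8.2]{LSU1968} yields interior H\"older continuity on $(0,T]\times\Omega$, with exponent $\alpha_T\in(0,1)$ depending only on the structural constants, hence on $\Omega$, $\gamma$, $f$, $\tau$, $\beta$, $T$, and the initial data. For H\"older continuity up to the lateral boundary, one uses that no boundary term arises in the integration by parts producing Lemma~\ref{lem.impreg1} (thanks to the Neumann condition \eqref{cpv2}), so cutoffs may be allowed to reach $\partial\Omega$; after flattening the boundary, the boundary version of the argument in \cite{LSU1968} applies and yields H\"older regularity up to $J\times\partial\Omega$. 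Continuity up to $t=0$ follows from the enhanced initial regularity $w^{in}=\mA^{-1}u^{in}\in W^{3,N+1}(\Omega)\hookrightarrow C^{1,\alpha}(\bar\Omega)$, which is a consequence of \eqref{ini} and elliptic regularity for $\mA$, combined with the estimate of Step~1 applied with $t_0=0$.

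\textbf{Main obstacle.} The only genuine difficulty is bookkeeping: one must verify that the precise form of the Caccioppoli inequality in Lemma~\ref{lem.impreg1} matches the quantitative hypotheses of \cite[Ch.~II, Thm.~8.2]{LSU1968} (scaling, powers, and integrability of the forcing). Once this identification is spelled out, the H\"older exponent $\alpha_T$ is delivered by the black-box theorem, with the stated dependence of $\alpha_T$ on $T$ coming through the $T$-dependence of $v^*(T)$, $w^*(T)$, and $C_{\ref{cst5}}(T)$.
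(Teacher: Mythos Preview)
Your proposal is correct and follows essentially the same approach as the paper: feed the local energy inequality of Lemma~\ref{lem.impreg1} into the De~Giorgi class framework of \cite[Ch.~II, Thm.~8.2]{LSU1968}, using the H\"older regularity of $w^{in}=\mA^{-1}[u^{in}]$ for continuity at $t=0$ and the Neumann structure for the lateral boundary. The paper's proof is slightly more explicit about the identification, naming the precise class $\hat{\mathcal{B}}_2$ and the exponents $q=r=(2N+4)/N$ via \cite[Ch.~II, Remark~7.2]{LSU1968}, whereas you spell out more carefully why the smallness hypothesis \eqref{smallc1} is automatically met for levels $h\ge w^*(T)-\delta$; but the substance is the same.
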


\begin{proof}
Let $\delta\in (0,1)$. It follows from Lemma~\ref{lem.impreg1} that the estimate \cite[Chapter~II, Equation~(7.5)]{LSU1968} holds true (with parameters $q=r=2(1+\kappa)=(2N+4)/N$ satisfying \cite[Chapter~II, equation~(7.3)]{LSU1968}). Consequently, according to \cite[Chapter~II, Remark~7.2]{LSU1968}, there is $C(T)>0$ such that 
\begin{equation*}
	w\in \hat{\mathcal{B}}_2([0,T]\times\bar{\Omega},w^*(T), C(T),(2N+4)/N,\delta,2/N)\,.
\end{equation*} 
Taking also into account the smoothness of the boundary of $\Omega$ and the H\"older continuity of $w^{in}\in C^{\alpha_0}(\bar{\Omega})$ for some $\alpha_0\in (0,1)$, which stems from the definition $w^{in}=\mA^{-1}[u^{in}]$, the regularity \eqref{ini} of $u^{in}$, and elliptic regularity, we then infer from \cite[Chapter~II, Lemma~8.1 \& Theorem~8.2]{LSU1968} that $w\in C^{\alpha_T}([0,T]\times \bar{\Omega})$ for some $\alpha_T\in (0,1)$  depending on $T$, but also on other parameters as indicated in the statement of Corollary~\ref{regularw}.
\end{proof}

Next, we may use the standard regularity theory of parabolic equations to obtain a H\"older estimate for $v$ as in \cite[Lemma~3.1]{FuSe2021}.

\begin{proposition}\label{prop.impreg2}
The function $v$ belongs to $C^{1+\alpha_T,\alpha_T}(J\times\bar{\Omega})$ with the exponent  $\alpha_T$ given in Corollary~\ref{regularw}.
\end{proposition}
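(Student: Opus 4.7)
The plan is to transfer the Hölder regularity of $w$ established in Corollary~\ref{regularw} to $v$ by reformulating \eqref{cp2} as a linear parabolic equation with a Hölder continuous source, to which classical parabolic Schauder estimates can be applied. The key observation is that, although the source $u$ in \eqref{cp2} is only known to lie in $L^\infty(J;L^1(\Omega))$ at this stage, one has $u = \mA[w]$ by the very definition \eqref{defw} of $w$, and $w$ is already Hölder continuous.

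Specifically, I would introduce $y(t) \triangleq \mA^{-1}[v(t)]$ for $t \in J$ and verify, by applying $\mA^{-1}$ to both sides of \eqref{cp2} and using the identities $\mA^{-1}[\Delta v - \beta v] = -\mA^{-1}\mA[v] = -v$ and $\mA^{-1}[u] = w$, that $y$ satisfies
\begin{equation*}
\tau \partial_t y - \Delta y + \beta y = w \;\;\text{ in }\;\; J \times \Omega\,, \qquad \nabla y \cdot \nu = 0 \;\;\text{ on }\;\; J \times \partial\Omega\,,
\end{equation*}
with initial datum $y(0) = \mA^{-1}[v^{in}]$. Owing to \eqref{ini} and elliptic regularity for $\mA$, this initial datum lies in $W^{3,N+1}(\Omega) \hookrightarrow C^{2+\alpha_0}(\bar{\Omega})$ for some $\alpha_0 \in (0,1)$. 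Since $w \in C^{\alpha_T}(J \times \bar{\Omega})$ by Corollary~\ref{regularw}, classical parabolic Schauder theory with Neumann boundary conditions (as in \cite[Chapter~IV]{LSU1968}) then yields $y \in C^{2+\alpha_T,1+\alpha_T/2}(J \times \bar{\Omega})$ in the usual parabolic Hölder sense. To circumvent any compatibility issue at $t=0$, I would first argue on $[t_*,T] \times \bar{\Omega}$ with $t_* > 0$, where $v(t_*)$ is smooth and satisfies the Neumann condition by Theorem~\ref{local}, and then match with the classical $C^{1,2}$ regularity near $t=0$.

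Having obtained the Hölder regularity of $y$, the relation $v = \mA[y] = -\Delta y + \beta y$ transfers the regularity of $\Delta y$ and $y$ to $v$, and in particular shows that $u = \tau\partial_t v - \Delta v + \beta v$ is now known to be Hölder continuous in both space and time. A further application of parabolic Schauder estimates, this time directly to the linear equation \eqref{cp2} for $v$ with the Hölder source $u$, upgrades the regularity of $v$ to $C^{1+\alpha_T,\alpha_T}(J \times \bar{\Omega})$ as claimed.

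The main obstacle is the low a priori regularity of $u$, which prevents a direct application of Schauder estimates to \eqref{cp2}. The trick of passing to $y = \mA^{-1}[v]$ effectively replaces the source $u$ by the Hölder continuous function $w$, and the identity $v = \mA[y]$ then recovers the regularity of $v$. The remaining bookkeeping of Hölder exponents in the bootstrap is standard and follows once Schauder theory is available.
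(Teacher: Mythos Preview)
Your approach is essentially the same as the paper's: introduce $y=\mA^{-1}[v]$ (the paper calls it $r$), observe that it solves the linear heat equation $\tau\partial_t y-\Delta y+\beta y=w$ with H\"older source $w$ and smooth initial data, apply parabolic Schauder theory, and recover $v=\beta y-\Delta y$ from the resulting regularity of $y$. The paper stops right there and reads off the claimed regularity of $v$ directly.

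Your final paragraph, however, is both unnecessary and incorrectly justified. From $y\in C^{2+\alpha_T,1+\alpha_T/2}$ you only get $v=-\Delta y+\beta y\in C^{\alpha_T,\alpha_T/2}$ in the parabolic sense; this does \emph{not} make $u=\mA[v]=-\Delta v+\beta v$ H\"older continuous, since you have no control on second-order spatial derivatives of $v$. Fortunately the regularity claimed for $v$ is already delivered by the previous step, so the second Schauder application can simply be dropped.
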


\begin{proof}
Set $r \triangleq \mA^{-1}[v]$ and  $r^{in} \triangleq\mA^{-1}[v^{in}]$. In view of the regularity $v^{in}\in W^{1,N+1}(\Omega)$, there is $\alpha_0\in(0,1/(N+1))$ such that $r^{in}\in C^{2+\alpha_0}(\bar{\Omega})$. Besides, we infer from \eqref{cp2} that $r$ is a solution to
\begin{equation*}
	\begin{split}
		& \tau \partial_t r - \Delta r + \beta r = w\,, \qquad (t,x)\in (0,T_{\mathrm{max}})\times \Omega\,, \\
		& \nabla r\cdot \nu =0\,, \qquad (t,x)\in (0,T_{\mathrm{max}})\times \partial\Omega\,, \\
		& r(0) = r^{in}\,, \qquad x\in \Omega\,,
	\end{split}
\end{equation*}
so that standard regularity theory of heat equations, along with Corollary~\ref{regularw}, ensures that $r$ belongs $C^{1+\alpha_T,2+\alpha_T}(J\times\bar{\Omega})$. As a result, we obtain that $v = \beta r-\Delta r\in C^{1+\alpha_T,\alpha_T}(J\times\bar{\Omega})$.
\end{proof}

\subsection{$L^q$-estimates for $u$}\label{sec.ir2}

Thanks to the just derived H\"older estimates on $v$, we may now proceed as in \cite[Section~6]{Aman1989} to establish bounds on $w$ in appropriate fractional Sobolev spaces, which do not depend on $T_{\mathrm{max}}$.
\refstepcounter{NumConst}\label{cst6}

\begin{lemma}\label{lem.impreg3} Let  $T\in(0,T_{\mathrm{max}})$. For any
	$p\in(1,\infty)$ and $\theta\in \left( \frac{1+p}{2p},1 \right)$, there is $C_{\ref{cst6}}(p,\theta,T)>0$ such that
	\begin{equation*}
	\|w(t)\|_{W^{2\theta,p}} \le C_{\ref{cst6}}(p,\theta,T)\,, \qquad t\in [0,T]\,. 
	\end{equation*}
\end{lemma}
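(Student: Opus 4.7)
The plan is to interpret the system \eqref{cpv1}--\eqref{cpv3}, combined with the relation $u=\mathcal{A}w$ coming from \eqref{cpv1.5}, as a single non-autonomous linear parabolic problem for $w$ and then to apply the theory of parabolic evolution operators developed by Amann in \cite{Aman1990, Aman1995}. Inserting $u\gamma(v) = \gamma(v)(-\Delta w + \beta w)$ into \eqref{cpv1} recasts the equation as
\begin{equation*}
\partial_t w - \gamma(v)\Delta w + \beta\gamma(v)\, w = \beta\varphi + \psi\quad\text{in}\;\;(0,T)\times\Omega,
\end{equation*}
subject to the Neumann condition \eqref{cpv2} and the initial condition \eqref{cpv3}. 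By Proposition~\ref{prop.impreg2}, the smoothness \eqref{g1} of $\gamma$, and the two-sided bound \eqref{bv}, the coefficient $a(t,x)\triangleq\gamma(v(t,x))$ is H\"older continuous on $J\times\bar\Omega$ and satisfies $\gamma(v^*(T))\le a(t,x)\le \gamma^*$, so the equation is uniformly parabolic on $J$ with H\"older continuous coefficients.

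Next, for $p\in(1,\infty)$, I would introduce the family of closed operators $\{\mathcal{B}(t)\}_{t\in J}$ on $L^p(\Omega)$ with common domain $\mathrm{dom}(\mathcal{B}(t)) = \{z\in W^{2,p}(\Omega): \nabla z\cdot\nu = 0\;\text{on}\;\partial\Omega\}$ defined by $\mathcal{B}(t)z := -a(t)\Delta z + \beta a(t)z$. Since $t\mapsto a(t,\cdot)$ is H\"older continuous from $J$ into $C(\bar\Omega)$, this family falls within the framework of \cite[Chapter~II]{Aman1995} and thus generates a parabolic evolution operator $\{U(t,s)\}_{0\le s\le t\le T}$ on $L^p(\Omega)$ sharing the main smoothing properties of an analytic semigroup. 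In particular, for $\theta\in ((1+p)/(2p),1)$ the condition $2\theta>1+1/p$ guarantees that the normal trace is well defined on $W^{2\theta,p}(\Omega)$ and that the closed subspace $W^{2\theta,p}_\nu(\Omega)$ of functions satisfying the Neumann condition coincides with the relevant interpolation space associated with $\mathcal{B}(t)$. The evolution operator then enjoys the estimates
\begin{equation*}
\|U(t,s) z\|_{W^{2\theta,p}} \le C(p,\theta,T)(t-s)^{-\theta}\|z\|_{L^p}, \qquad 0 \le s < t \le T,
\end{equation*}
together with the stability estimate $\|U(t,0) z\|_{W^{2\theta,p}} \le C(p,\theta,T)\|z\|_{W^{2\theta,p}}$ for $z\in W^{2\theta,p}_\nu(\Omega)$.

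Duhamel's formula then yields
\begin{equation*}
w(t) = U(t,0) w^{in} + \int_0^t U(t,s)(\beta\varphi + \psi)(s)\,\mathrm{d}s,\qquad t\in J.
\end{equation*}
Since $w^{in}=\mathcal{A}^{-1}[u^{in}]$ belongs to $W^{3,N+1}(\Omega)$ by elliptic regularity and the regularity \eqref{ini} of $u^{in}$, and additionally satisfies $\nabla w^{in}\cdot\nu = 0$ on $\partial\Omega$ by construction, Sobolev embedding places $w^{in}$ in $W^{2\theta,p}_\nu(\Omega)$; hence the stability estimate bounds $\|U(t,0)w^{in}\|_{W^{2\theta,p}}$ uniformly on $J$. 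For the Duhamel integral, the uniform $L^\infty$-bound \eqref{phib} on $\beta\varphi + \psi$, combined with the smoothing estimate and $\theta<1$, yields
\begin{equation*}
\bigg\|\int_0^t U(t,s)(\beta\varphi + \psi)(s)\,\mathrm{d}s\bigg\|_{W^{2\theta,p}} \le C(p,\theta,T)\int_0^t (t-s)^{-\theta}\,\mathrm{d}s \le \frac{C(p,\theta,T)\,T^{1-\theta}}{1-\theta}\,,
\end{equation*}
which closes the argument.

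The main difficulty is the careful verification of the hypotheses in \cite{Aman1990, Aman1995}: one must check that $\{\mathcal{B}(t)\}_{t\in J}$ is a family of generators of the required class on $L^p(\Omega)$ with the correct common-domain identification, and that the natural interpolation spaces carrying the Neumann boundary condition are indeed the $W^{2\theta,p}_\nu(\Omega)$ spaces when $2\theta>1+1/p$. Once these technical identifications are in place, the argument reduces to the Duhamel estimate above, noting that the right-hand side $\beta\varphi + \psi$ and the initial datum $w^{in}$ carry, respectively, uniform $L^\infty$-in-$(t,x)$ and $W^{2\theta,p}$-bounds that do not involve $T_{\mathrm{max}}$ other than through the choice of $T$.
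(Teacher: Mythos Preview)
Your proposal is correct and follows essentially the same approach as the paper: recast \eqref{cpv} as a non-autonomous linear parabolic problem for $w$, invoke the H\"older regularity of $v$ from Proposition~\ref{prop.impreg2} to place the coefficient family in Amann's framework, and then read off the $W^{2\theta,p}$-bound from the representation formula via the smoothing and stability estimates for the evolution operator. The only cosmetic difference is that the paper puts only the principal part $-\gamma(v)\Delta$ into the operator and moves the zero-order term $\beta\gamma(v)w$ to the right-hand side (so the source becomes $F=\beta\varphi+\psi-\beta w\gamma(v)$, still bounded in $L^\infty$ by \eqref{bw} and \eqref{phib}), whereas you keep $\beta\gamma(v)w$ on the left; both lead to the same estimate.
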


\begin{proof} We set $D\triangleq  (v_*/2,2v^*(T))$ and $J=[0,T]$, where $v_*$ and $v^*(T)$ are defined in \eqref{e00} and \eqref{bv}, respectively. For $s\in D$, we define the elliptic operator $\mathsf{A}(s)$ by $\mathsf{A}(s)z := -\gamma(s) \Delta z$ and the boundary operator $\mathsf{B}z := \nabla z\cdot \nu$. We point out that $\mathsf{A}(s)$ is uniformly elliptic for $s\in D$ since  $\min_D\{\gamma\}>0$ due to \eqref{g1}.  For $t\in J$, let $\tilde{\mathsf{A}}(t)$ be the $L^p$-realization of $(\mathsf{A}(v(t)),\mathsf{B})$ with domain 
	\begin{equation*}
	W_{\mathcal{B}}^{2,p}(\Omega) := \{ z \in W^{2,p}(\Omega)\ :\ \nabla z\cdot \nu = 0 \;\;\text{ on}\;\; \partial\Omega \}\,.
	\end{equation*}
	As in the proof of \cite[Theorem~6.1]{Aman1989}, the assumptions on $\gamma$ and Proposition~\ref{prop.impreg2} guarantee that 
	\begin{equation*}
	\tilde{\mathsf{A}} \in BUC^\alpha(J, \mathcal{L}(W^{2,p}(\Omega),L^p(\Omega)))
	\end{equation*}
	and that $\tilde{\mathsf{A}}(J)$ is a \textit{regularly bounded subset} of $C^\alpha(J,\mathcal{H}(W^{2,p}(\Omega),L^p(\Omega)))$ in the sense of \cite[Section~4]{Aman1988} (or, equivalently, the condition \cite[(II.4.2.1)]{Aman1995} is satisfied). By \cite[Theorem~A.1]{Aman1990}, see also \cite[Theorem~II.4.4.1]{Aman1995}, there is a unique parabolic fundamental solution $\tilde{U}$ associated to $\{ \tilde{\mathsf{A}}(t)\ :\ t\in J \}$ and there exist positive constants $M>0$ and $\omega>0$ such that 
	\begin{equation}
	\|\tilde{U}(t,s)\|_{\mathcal{L}(W^{2,p}(\Omega))} + 	\|\tilde{U}(t,s)\|_{\mathcal{L}(L^p(\Omega))} + (t-s) 	\|\tilde{U}(t,s)\|_{\mathcal{L}(L^p(\Omega),W^{2,p}(\Omega))} \le M e^{\omega(t-s)} \label{irb2}
	\end{equation}
	for $0\le s < t\leq T$. Since $\theta\in \left( \frac{1+p}{2p},1 \right)$, it follows from \cite[Theorem~5.2]{Aman1993} that
	\begin{equation*}
	\left( L^p(\Omega) , W_{\mathcal{B}}^{2,p}(\Omega) \right)_{\theta,p} \doteq W_{\mathcal{B}}^{2\theta,p}(\Omega) \triangleq \{ z \in W^{2\theta,p}(\Omega)\ :\ \nabla z\cdot \nu = 0 \;\;\text{ on}\;\; \partial\Omega \}\,,
	\end{equation*}
	and we infer from \cite[Lemma~II.5.1.3]{Aman1995} that there is $M_\theta>0$ such that
	\begin{equation}
	\|\tilde{U}(t,s)\|_{\mathcal{L}(W_{\mathcal{B}}^{2\theta,p}(\Omega))} + (t-s)^\theta \|\tilde{U}(t,s)\|_{\mathcal{L}(L^p(\Omega),W_{\mathcal{B}}^{2\theta,p}(\Omega))} \le M_\theta e^{\omega(t-s)} \label{irb3}
	\end{equation}
	for $0\le s<t\leq T$. We deduce from \eqref{cpv} that $w$ solves 
	\begin{equation}
	\begin{split}
	\partial_t w + \tilde{\mathsf{A}}(\cdot) w  & = F\,, \qquad t\in J\,, \\
	w(0) & = w^{in} \,, 
	\end{split}\label{irb4}
	\end{equation}
	where
	\begin{equation*}
	F(t,x) \triangleq  \big( \beta\varphi + \psi - \beta w\gamma(v)\big)(t,x)\,, \qquad (t,x)\in J\times \Omega\,. 
	\end{equation*}
	We recall that,  according to \eqref{gvup}, \eqref{bw}, and \eqref{phib}, \refstepcounter{NumConst}\label{cst7}
	\begin{equation}
	\| F(t)\|_\infty \le C_{\ref{cst7}}(T) \triangleq \big( \beta \gamma^* + f^* \big) w^*(T) + \beta w^*(T) \gamma^*\,, \qquad t\in J\,, \label{irb5}
	\end{equation}
 	while the continuity of $u$, $v$ and $w$, see Theorem~\ref{local}, ensures that
	\begin{equation}
	F \in C(J\times\bar{\Omega})\,. \label{irb8}
	\end{equation}
	
	Owing to  \eqref{cpv} and \eqref{irb8}, we observe that $w$ is the unique  classical solution to \eqref{irb4} on $J$ and thus has the representation formula 
	\begin{equation}
	w(t) =  \tilde{U}(t,0) w^{in} + \int_0^t \tilde{U}(t,s) F(s)\ \mathrm{d}s\,, \qquad t\in J\,,\label{irb9}
	\end{equation}
	 according to \cite[Remarks~II.2.1.2~(a)]{Aman1995}.  We then infer from  \eqref{irb3}, \eqref{irb5}, and \eqref{irb9} that, for $t\in J $,
	\begin{align}
	\|w(t)\|_{W^{2\theta,p}} & \le M_\theta e^{\omega t} \|w^{in}\|_{W^{2\theta,p}} + M_\theta \int_0^t (t-s)^{-\theta} e^{\omega(t-s)} \|F(s)\|_p\ \mathrm{d}s \nonumber \\
	& \le M_\theta e^{\omega T} \|w^{in}\|_{W^{2,p}} + C_{\ref{cst7}}(T) M_\theta|\Omega|^{1/p} \int_0^t (t-s)^{-\theta} e^{\omega(t-s)} \ \mathrm{d}s\,. \label{irb11}
	\end{align}
Since
	\begin{equation*}
\int_0^t (t-s)^{-\theta} e^{\omega(t-s)} \ \mathrm{d}s=	\int_0^t s^{-\theta} e^{\omega s}\ \mathrm{d}s \leq \frac{T^{1-\theta}e^{\omega T}}{1-\theta}\,,  \qquad t\in J\,,
	\end{equation*}
we conclude from \eqref{irb11} that
	\begin{equation*}
	\|w(t)\|_{W^{2\theta,p}} \le C(p,\theta,T)\,, \qquad t\in J\,,
	\end{equation*}
and the proof is complete.
\end{proof}

With an appropriate choice of $\theta$ and $p$ in Lemma~\ref{lem.impreg3}, we are able to prove that $v$ is bounded in $W^{1,\infty}(\Omega)$ for positive times, which will further give rise to $L^q$-estimates for $u$ with any $q\in (1,\infty)$.

\begin{proposition}\label{propLpu} \refstepcounter{NumConst}\label{cst8}
	Let $T\in (t_*, T_{\mathrm{max}})$. For any  $q\in (1,\infty)$,  there is $C_{\ref{cst8}}(q,T)>0$ such that
	\begin{equation*}
	\|u(t)\|_{q} \le C_{\ref{cst8}}(q,T)\,, \qquad t\in [0,T]\,. 
	\end{equation*}
\end{proposition}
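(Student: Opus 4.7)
The plan is to combine the Sobolev regularity of $w$ from Lemma~\ref{lem.impreg3} with a standard $L^q$ energy estimate on $u$, bridged by pointwise bounds on $v$ and $\nabla v$.

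First, I would fix $q \in (1,\infty)$ and invoke Lemma~\ref{lem.impreg3} with a pair $(p,\theta)$ such that $p > N$ and $\theta \in \left(\frac{p+N}{2p}, 1\right)$; such a $\theta$ exists in $(\frac{1+p}{2p},1)$ since $\frac{p+N}{2p} = \frac{1}{2} + \frac{N}{2p} < 1$ for $p>N$. For this choice, the Sobolev embedding $W^{2\theta,p}(\Omega) \hookrightarrow W^{1,\infty}(\Omega)$ is continuous, whence $\|w(t)\|_{W^{1,\infty}(\Omega)} \le C(T)$ uniformly on $J=[0,T]$. Combined with Proposition~\ref{vbd1}, this yields $v \le Bw \le C(T)$ on $J\times\bar\Omega$, which together with the lower bound $v\ge v_*>0$ from Theorem~\ref{local} confines $v$ to a compact subinterval of $(0,\infty)$. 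By \eqref{g1}, $\gamma(v)$ is then bounded from above and bounded away from zero on $J\times\bar\Omega$, and $|\gamma'(v)|$, $|\gamma''(v)|$ are bounded as well.

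Next, I would upgrade this to a pointwise bound on $\nabla v$. One way to proceed is to write the Duhamel formula
\begin{equation*}
v(t) = e^{-t\mA/\tau} v^{in} + \frac{1}{\tau}\int_0^t e^{-(t-s)\mA/\tau} u(s)\, ds,
\end{equation*}
substitute $u = \mA w = -\Delta w + \beta w$, integrate by parts in $s$ using the semigroup identity $\mA e^{-s\mA/\tau} = -\tau\,\partial_s e^{-s\mA/\tau}$, and then invoke the key identity \eqref{cpv1} to replace $\partial_s w$ by $\beta\varphi+\psi - u\gamma(v) = \beta\varphi+\psi - \gamma(v)\mA w$. Combined with the $W^{1,\infty}$ bound on $w$, the bound \eqref{phib} on $\beta\varphi+\psi$, the pointwise bounds on $\gamma(v)$, and the regularity $v^{in} \in W^{1,N+1}(\Omega) \hookrightarrow W^{1,\infty}(\Omega)$, this procedure delivers $\|\nabla v\|_{L^\infty(J\times\bar\Omega)} \le C(T)$.

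Armed with the bounds on $v$, $\gamma(v)$, $\gamma'(v)$, and $\nabla v$, I would run a direct energy estimate. Multiplying \eqref{cp1} by $u^{q-1}$ and integrating by parts using the no-flux boundary condition gives
\begin{equation*}
\frac{1}{q}\frac{d}{dt}\|u\|_q^q + (q-1)\int_\Omega \gamma(v) u^{q-2}|\nabla u|^2\,dx = -(q-1)\int_\Omega \gamma'(v) u^{q-1}\nabla u\cdot\nabla v\,dx + \int_\Omega u^q f(n)\,dx.
\end{equation*}
Young's inequality absorbs the first term on the right-hand side into the gradient dissipation (using the positive lower bound on $\gamma(v)$ and the $L^\infty$ bounds on $\gamma'(v)$ and $\nabla v$), and \eqref{fup} handles the last term. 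This produces $\frac{d}{dt}\|u\|_q^q \le C(q,T)\|u\|_q^q$, and Gronwall's lemma, together with $u^{in} \in W^{1,N+1}(\Omega) \hookrightarrow L^q(\Omega)$ for every $q\in[1,\infty)$, yields the stated uniform-in-time bound on $[0,T]$.

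The main obstacle is the second step, namely deriving the $L^\infty$ bound on $\nabla v$; the $W^{2\theta,p}$ control on $w$ with $\theta<1$ does not directly give $u$ in any positive-order Sobolev space, so one must exploit the semigroup structure and the precise form of the key identity \eqref{cpv1} to transfer the spatial regularity from $w$ to $\nabla v$ without losing a full derivative.
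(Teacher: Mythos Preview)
Your outline is largely on the right track---steps~1 and~3 match the paper---but step~2 has a genuine gap. After your integration by parts in $s$, the Duhamel representation for $v$ contains the term
\[
\int_0^t e^{-(t-s)\mA/\tau}\, u(s)\gamma(v(s))\,\mathrm{d}s,
\]
and you propose to control it via the rewriting $u\gamma(v)=\gamma(v)\mA w$ together with the $W^{1,\infty}$ bound on $w$. But $\mA w=-\Delta w+\beta w$ involves \emph{second} derivatives of $w$, which are not controlled by $\|w\|_{W^{1,\infty}}$; and if you try to move one derivative onto the semigroup by writing $-\gamma(v)\Delta w=-\mathrm{div}(\gamma(v)\nabla w)+\gamma'(v)\nabla v\cdot\nabla w$, the first piece gives a time weight $(t-s)^{-1}$ (not integrable) when you estimate $\nabla e^{-(t-s)\mA/\tau}\,\mathrm{div}(\cdot)$ in $L^\infty$, while the second piece reintroduces $\nabla v$ on the right-hand side, making the argument circular. (Incidentally, $W^{1,N+1}(\Omega)$ does \emph{not} embed into $W^{1,\infty}(\Omega)$, only into $C(\bar\Omega)$, so the treatment of the initial datum also needs adjustment.)

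The paper resolves this by embracing rather than avoiding the loss of one order: from $w\in W^{2\theta,p}$ one has $u=\mA w$ bounded in the \emph{negative-order} space $W^{2\theta-2,p}$, and then runs the Duhamel formula for \eqref{cp2} directly, using the smoothing $e^{-(t-s)\mA}:W^{2\theta-2,p}\to W^{2\theta',p}$ for any $\theta'\in\big(\frac{N+p}{2p},\theta\big)$, which produces the integrable kernel $(t-s)^{\theta-\theta'-1}$. Starting this Duhamel at $t_*$ (where $v(t_*)\in C^2(\bar\Omega)$ by \eqref{y10}) handles the initial condition, and the choice $2\theta'>1+N/p$ gives $W^{2\theta',p}\hookrightarrow C^1(\bar\Omega)$, hence $\|\nabla v(t)\|_\infty\le C(T)$ on $[t_*,T]$. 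Your energy estimate then runs on $[t_*,T]$, and the bound on $[0,t_*]$ comes from Theorem~\ref{local}.
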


\begin{proof} We fix $p\in (N,\infty)$ and 
\begin{equation*}
	\frac{N+p}{2p} < \theta' < \theta <1\,.
\end{equation*} 
From \eqref{defw} and Lemma~\ref{lem.impreg3}, we deduce that
\begin{equation}
	\|u(t)\|_{W^{2\theta-2,p}} = \|\mA[w(t)]\|_{W^{2\theta-2,p}} \leq C \|w(t)\|_{W^{2\theta,p}} \leq C(T)\,, \qquad t\in [0,T]\,. \label{y11}
\end{equation}
Introducing
\begin{equation*}
	W_{\mathcal{B}}^{\xi,p}(\Omega) \triangleq \left\{ 
	\begin{array}{ll}
		\{ z \in W^{\xi,p}(\Omega)\ :\ \nabla z\cdot \nu = 0 \;\;\text{ on}\;\; \partial\Omega \}\,, &  (p+1)/p < \xi \le 2\,, \\
		& \\
		W^{\xi,p}(\Omega)\,, & (1-p)/p < \xi < (p+1)/p \,,
		\end{array}
	\right.
\end{equation*}
see \cite[Section~7]{Aman1993}, and observing that the choice of $\theta$ and $p$ guarantees that $2\theta-2>(1-p)/p$, the realization in $W_{\mathcal{B}}^{2\theta-2,p}(\Omega)$ of $\mA$ generates an analytic semigroup in $W_{\mathcal{B}}^{2\theta-2,p}(\Omega)$,  see \cite[Theorem~8.5]{Aman1993}, and the interpolation space is characterized by
\begin{equation*}
	\left( W_{\mathcal{B}}^{2\theta-2,p}(\Omega) , W_{\mathcal{B}}^{2\theta,p}(\Omega)\right)_{1+\theta'-\theta,p} = W_{\mathcal{B}}^{2\theta',p}(\Omega)
\end{equation*}
(up to equivalent norms). We then infer from \eqref{cp2}, \eqref{y10}, \eqref{y11}, and \cite[Theorem~V.2.1.3]{Aman1995} that, for $t\in [t_*,T]$, 
\begin{align*}
	\|v(t)\|_{W^{2\theta',p}} & \le \left\| e^{-(t-t_*)\mA} v(t_*) \right\|_{W^{2\theta',p}} + \int_{t_*}^t \left\| e^{-(t-s)\mA} u(s) \right\|_{W^{2\theta',p}}\ \mathrm{d}s \\
	& \le C e^{-\beta t/2} \|v(t_*)\|_{W^{2\theta',p}} + C \int_{t_*}^t (t-s)^{\theta-\theta'-1} e^{-\beta(t-s)/2}\|u(s)\|_{W^{2\theta-2,p}}\ \mathrm{d}s \\
	& \le C \|v(t_*)\|_{W^{2,p}} + C(T) \int_{t_*}^t (t-s)^{\theta-\theta'-1} e^{-\beta(t-s)/2}\ \mathrm{d}s \\
	& \le C(T) (1+ M(t_*))\,.
\end{align*}
As the choice of $\theta'$ guarantees that $W^{2\theta',p}(\Omega)$ is continuously embedded in $C^1(\bar{\Omega})$, the above estimate implies that 
\begin{equation}
	\|\nabla v(t)\|_\infty \le C(T)\,, \qquad t\in [t_*,T]\,. \label{irb14}
\end{equation}

Having at hand the $W^{1,\infty}$-boundedness of $v$ in $[t_*,T]$, it becomes simple to derive the $L^q$-boundedness of $u$ with any $q>1$. To see this, let us multiply \eqref{cp1} by $u^{q-1}$ and integrate by parts to obtain that
\begin{equation}
	\begin{split}
	\frac{1}{q}\frac{d}{dt}\int_\Omega u^q\ \rd x & +(q-1)\int_\Omega \gamma(v)u^{q-2}|\nabla u|^2\ \rd x \\
	& =-(q-1)\int_\Omega \gamma'(v)u^{q-1}\nabla u\cdot\nabla v\ \rd x+\int_\Omega u^qf(n)\ \rd x\,. 
	\end{split}\label{uLq0}
\end{equation}
By Young's inequality, the boundedness \eqref{fup} of $f(n)$,  and the boundedness of $\frac{|\gamma'(v)|^2}{\gamma(v)}$ on $[0,T]\times \bar{\Omega}$ due to \eqref{g1}, \eqref{e00}, and \eqref{bv}, we infer that, for $t\in[t_*,T]$,
\begin{align*}
	&-(q-1)\int_\Omega \gamma'(v)u^{q-1}\nabla u\cdot\nabla v\,\rd x+\int_\Omega u^qf(n)\,\rd x\\
	&\leq \frac{q-1}{2}\int_\Omega \gamma(v)u^{q-2}|\nabla u|^2\,\rd x+\frac{q-1}{2}\int_\Omega \frac{|\gamma'(v)|^2}{\gamma(v)}u^q|\nabla v|^2\ \rd x+f^*\int_\Omega u^q\ \rd x\\
	&\leq \frac{q-1}{2}\int_\Omega \gamma(v)u^{q-2}|\nabla u|^2\,\rd x + \big( C(q,T) \sup\limits_{[t_*,T]}\|\nabla v\|_{\infty}+f^* \big)\int_\Omega u^q\,\rd x\,.
\end{align*}
We then use \eqref{irb14} to deduce that, for $t\in[t_*,T]$,
\begin{equation*}
	\frac{d}{dt}\int_\Omega u^q \,\rd x+\frac{q(q-1)}{2}\int_\Omega \gamma(v)u^{q-2}|\nabla u|^2\,\rd x\leq C(q,T) \int_\Omega u^q\,\rd x\,. 
\end{equation*}
Integrating the above differential  inequality over $[t_*,t]$ for $t\in [t_*,T]$ provides the boundedness of $\|u\|_q$ in $[t_*,T]$, while that on $[0,t_*]$ is a consequence of Theorem~\ref{local}.
 \end{proof}

\begin{proof}[Proof of Theorem~\ref{TH1}]
	With the aid of Proposition~\ref{propLpu}, we may further use a standard bootstrap argument to prove that, for any $0<T<T_{\mathrm{max}}$, there is $C(T)>0$ independent of $T_{\mathrm{max}}$ such that
\begin{equation*}
\sup\limits_{0\leq t\leq T}\|u(t)\|_{\infty}\leq C(T)\,.
\end{equation*}
According to Theorem~\ref{local}, we deduce that $T_{\mathrm{max}}= \infty$ and thus Theorem~\ref{TH1} is proved.
\end{proof}

\section{Uniform-in-time Boundedness in $2$D}\label{sec-2D}

 The main outcome of Theorem~\ref{TH1} being the global existence of classical solutions to \eqref{cp} under the sole assumption \eqref{g1}, we are next interested in the uniform-in-time boundedness of these solutions under certain decay assumptions as stated in Theorem~\ref{TH0} and Theorem~\ref{TH2}.
  
In this section, we first show the uniform-in-time boundedness of classical solutions to \eqref{cp} in the two-dimensional case. The proof is based on the particular smoothing properties of elliptic equations with right-hand side in $L^1(\Omega)$, as exemplified in Lemma~\ref{lm2e}. Similar arguments are carried out  in \cite[Section~3.1]{FuJi2020b} for the two-component system~\eqref{prlmod0}. 

Throughout this section, $C$ and $(C_i)_{i\ge 9}$ denote positive constants depending only on $\Omega$, $\gamma$, $f$, $\tau$, $\beta$, and the initial data. Dependence upon additional parameters will be indicated explicitly.

\medskip

To begin with, we derive a time-independent $H^1$-bound on $w$. \refstepcounter{NumConst}\label{cst9}

\begin{lemma}\label{wH1a} Assume \eqref{g1} and \eqref{A2b} and that $\|u^{in}+n^{in}\|_1<4\pi/\chi$ for some $\chi>0$. There is $C_{\ref{cst9}}(\chi)>0$ such that
	\begin{equation*}
	\sup\limits_{t\geq0}\left(\|\nabla w(t)\|_2+\|w(t)\|_2+\int_t^{t+1}\int_\Omega u^2\gamma(v)\,\rd x\rd s\right)\leq C_{\ref{cst9}}(\chi)\,.
	\end{equation*}
\end{lemma}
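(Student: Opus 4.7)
The plan is to derive a coercive energy inequality for $y(t) \triangleq \|\nabla w(t)\|_2^2 + \beta\|w(t)\|_2^2$ with dissipation $\int_\Omega u^2\gamma(v)\,\rd x$, and exploit the subcritical-mass assumption to absorb all source terms. The first step is to secure the time-uniform integrability
\[
\sup_{t\ge 0}\int_\Omega \gamma(v(t))^{-(1+\varepsilon_0)}\,\rd x \le C
\]
for some $\varepsilon_0>0$. Mass conservation \eqref{e0} gives $\|u(t)+n(t)\|_1 = \|u^{in}+n^{in}\|_1 < 4\pi/\chi$; choosing $\varepsilon>0$ so small that $(1+\varepsilon)\chi\|u^{in}+n^{in}\|_1 < 4\pi$ and applying Lemma~\ref{lm2e} to $S=\mA^{-1}[u+n]$ yields $\int_\Omega e^{(1+\varepsilon)\chi S}\,\rd x\le C$. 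Combined with $v\leq (1+\varepsilon)S + L_1(\varepsilon)$ from Proposition~\ref{vbd1}, this controls $\int_\Omega e^{\chi v}\,\rd x$. Finally, assumption~\eqref{A2b}, together with the lower bound $v\ge v_*$ and the continuity of $\gamma$ on $[v_*,\infty)$, implies $\gamma(s)^{-1}\le Ce^{\chi s}$ for $s\ge v_*$, and a small shrinking of $\varepsilon$ promotes integrability of $e^{\chi v}$ to $e^{(1+\varepsilon_0)\chi v}$ for a genuine $\varepsilon_0>0$. In addition, Lemma~\ref{lm2} (applicable for every $q<\infty$ when $N=2$) and the mass bound deliver $\sup_{t\ge 0}\|w(t)\|_q\le C(q)$ for all $q<\infty$, so H\"older's inequality with exponents $(1+\varepsilon_0)/\varepsilon_0$ and $1+\varepsilon_0$ gives
\[
\sup_{t\ge 0}\int_\Omega \frac{w(t)^2}{\gamma(v(t))}\,\rd x \le C.
\]

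Next, I test the key identity~\eqref{var1} by $u = \mA w$, using self-adjointness of $\mA^{-1}$ and $\int_\Omega u\partial_t w\,\rd x = \frac{1}{2}y'$, to produce
\[
\frac{1}{2}y'(t) + \int_\Omega u^2\gamma(v)\,\rd x = \beta\int_\Omega uw\gamma(v)\,\rd x + \int_\Omega uw f(n)\,\rd x.
\]
Cauchy--Schwarz, \eqref{fup}, the trivial bound $\int_\Omega w^2\gamma(v)\,\rd x\le \gamma^*\|w\|_2^2\le C$, and the estimate on $\int_\Omega w^2/\gamma(v)\,\rd x$ established above together yield
\[
\beta\int_\Omega uw\gamma(v)\,\rd x + \int_\Omega uwf(n)\,\rd x \le C\left(\int_\Omega u^2\gamma(v)\,\rd x\right)^{1/2},
\]
and Young's inequality absorbs this into the dissipation, leaving $y' + \int_\Omega u^2\gamma(v)\,\rd x \le C'$.

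The final step exploits the identity $y = \int_\Omega uw\,\rd x$ (obtained by integrating $-\Delta w+\beta w = u$ against $w$). Applying Cauchy--Schwarz with the weights $\gamma(v)^{\pm 1/2}$ and the uniform bound on $\int_\Omega w^2/\gamma(v)\,\rd x$ gives
\[
y^2 \le C \int_\Omega u^2\gamma(v)\,\rd x,
\]
so the preceding differential inequality becomes the quadratic ODI $y' + c y^2 \le C'$, whence $y(t)\le \max\{y(0),\sqrt{C'/c}\}$ uniformly in time, and integrating over $[t,t+1]$ delivers the spacetime estimate on $\int_\Omega u^2\gamma(v)\,\rd x$. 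I expect the delicate point to be the very first step: the \emph{strict} subcriticality $\|u^{in}+n^{in}\|_1 < 4\pi/\chi$ must be propagated through the cascade $S\to v \to \gamma(v)^{-1}$ to produce integrability of $\gamma(v)^{-(1+\varepsilon_0)}$ with a genuine $\varepsilon_0>0$, since this extra integrability is precisely what makes the Cauchy--Schwarz absorptions close.
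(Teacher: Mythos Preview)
Your proof is correct and follows the same overall strategy as the paper: test the key identity~\eqref{var1} by $u=\mA w$, use self-adjointness to obtain the energy identity for $y=\|\nabla w\|_2^2+\beta\|w\|_2^2$, and control the cross term via the quantity $\int_\Omega w^2/\gamma(v)\,\rd x$, whose uniform bound hinges on the subcritical mass and Lemma~\ref{lm2e}.

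There are two mild differences in execution. First, to bound $\int_\Omega w^2/\gamma(v)\,\rd x$ the paper absorbs the factor $w^2\le S^2$ directly into the exponential via the elementary inequality $e^{\varepsilon s}\ge \varepsilon^2 s^2$ and then applies Lemma~\ref{lm2e} once, whereas you split by H\"older into $\|w\|_q$-bounds (from Lemma~\ref{lm2}) and $\|\gamma(v)^{-1}\|_{1+\varepsilon_0}$-bounds (from Lemma~\ref{lm2e}); both work and require the same strict subcriticality margin. Second, to close the differential inequality the paper adds the identity $y=\int_\Omega uw\,\rd x$ back in to produce a \emph{linear} damping $y'+y\le C$, while you use $y^2\le C\int_\Omega u^2\gamma(v)\,\rd x$ to obtain the \emph{quadratic} ODI $y'+cy^2\le C'$. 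The linear version is slightly sharper (exponential relaxation versus algebraic), but for the statement at hand both yield the same uniform-in-time bound and the same spacetime estimate on $\int_\Omega u^2\gamma(v)$.
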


\begin{proof}
We multiply the key identity \eqref{var1} by $u$ and integrate over $\Omega$. Recalling that $w=\mA^{-1}[u]$ by \eqref{defw} and that $\mA^{-1}$ is self-adjoint, we obtain 
\begin{align*}
	\frac{1}{2}\frac{d}{dt}\left(\|\nabla w\|_2^2 + \beta \|w\|_2^2\right)+\int_\Omega u^2\gamma(v)\,\rd x=&\int_\Omega u \mA^{-1}[\beta u\gamma(v)+uf(n)]\,\rd x\\
	=&\int_\Omega (\beta u\gamma(v)+uf(n)) \mA^{-1}[u]\,\rd x\\
	=&\int_\Omega (\beta \gamma(v)+f(n)) uw\, \rd x\,.
\end{align*}
Hence, owing to \eqref{gvup} and \eqref{fup},
\begin{equation*}
	\frac{1}{2}\frac{d}{dt}\left(\|\nabla w\|_2^2 + \beta \|w\|_2^2\right)+\int_\Omega u^2\gamma(v)\,\rd x \le (\beta \gamma^* + f^*) \int_\Omega uw\, \rd x\,.
\end{equation*}
Also, it follows from \eqref{defw} that
\begin{equation*}
	\|\nabla w\|_2^2+\beta\|w\|_2^2=\int_\Omega w u\, \rd x.
\end{equation*} 
Combining the above inequalities and using Young's inequality, we arrive at
\begin{align*}
	\frac{d}{dt}\left(\|\nabla w\|_2^2 + \beta\|w\|_2^2\right) & +\|\nabla w\|_2^2 + \beta \|w\|_2^2 +2\int_\Omega u^2 \gamma(v)\,\rd x\\
	=&(2\beta\gamma^* + 2f^* + 1) \int_\Omega w u \,\rd x\\
	\leq&\int_\Omega u^2 \gamma(v)\, \rd x
	+\frac{(2\beta\gamma^*+2f^*+1)^2}{4}\int_\Omega \frac{w^2}{\gamma(v)}\,\rd x.
\end{align*} 
We thus  obtain
\begin{equation}\label{wb00}
	\frac{d}{dt}\left(\|\nabla w\|_2^2 + \beta \|w\|_2^2\right) + \|\nabla w\|_2^2 + \beta \|w\|_2^2	+\int_\Omega u^2 \gamma(v)\, \rd x 
	\leq C\int_\Omega \frac{w^2}{\gamma(v)}\, \rd x\,.
\end{equation} 

We now deduce from the assumption~\eqref{A2b} that there exist $b>0$  and $s_\chi>v_*$ depending on $\chi$ such that  $e^{\chi s} \gamma(s) \ge 1/b$ for all $s\geq s_\chi$. Since $\gamma(s)\ge \gamma(s_\chi)$ for $s\in (0,s_\chi]$, we end up with
\begin{equation}\label{cond_gamma0}
	\frac{1}{\gamma(s)}\leq \max\left\{ b , \frac{1}{\gamma(s_\chi)} \right\} e^{\chi s}\,, \qquad s>0\,.
\end{equation}
Now, for $\varepsilon>0$, we infer from \eqref{y3}, \eqref{y7a}, \eqref{cond_gamma0}, and the elementary inequality $e^{\varepsilon s} \ge \varepsilon^2 s^2$, $s>0$,  that
\begin{equation}
\begin{split}
	\int_\Omega \frac{w^2}{\gamma(v)}\, \rd x & \le C(\chi) \int_\Omega w^2 e^{\chi v}\, \rd x \le C(\chi) \int_\Omega S^2 e^{\chi(1+\varepsilon)S + \chi L_1(\varepsilon)}\, \rd x \\
	& \le \frac{C(\chi) e^{\chi L_1(\varepsilon)}}{\chi^2 \varepsilon^2} \int_\Omega e^{\chi(1+2\varepsilon)S}\, \rd x \,.
\end{split} \label{unic0}
\end{equation}
Since $\|u+n\|_1=\|u^{in}+n^{in}\|_1 < 4\pi/\chi$ by \eqref{massini} and \eqref{e0}, we may choose $\varepsilon_\chi>0$ such that
\begin{equation}
	\chi (1+ 2 \varepsilon_\chi) < \frac{4\pi}{\|u^{in}+n^{in}\|_1} \qquad \left( \text{ say }\;\;  \varepsilon_\chi \triangleq \frac{\pi}{\chi \|u^{in}+n^{in}\|_1} - \frac{1}{4} \right) \label{y13}
\end{equation}
and deduce from \eqref{defS} and Lemma~\ref{lm2e} that
\begin{equation}
	\int_\Omega e^{\chi(1+2\varepsilon_\chi)S}\, \rd x \le C_{\ref{cst2}}(\|u^{in}+n^{in}\|_1,\chi(1+2\varepsilon_\chi))\,. \label{y12}
\end{equation}
Gathering \eqref{wb00}, \eqref{unic0} (with $\varepsilon=\varepsilon_\chi$), and \eqref{y12} gives
\begin{equation*}
	\frac{d}{dt}\left(\|\nabla w\|_2^2 + \beta \|w\|_2^2\right) + \|\nabla w\|_2^2 + \beta \|w\|_2^2	+\int_\Omega u^2 \gamma(v)\, \rd x 
	\leq C(\chi)\,,
\end{equation*} 
from which Lemma~\ref{wH1a} follows after integration with respect to time.
\end{proof}

\begin{proposition} \refstepcounter{NumConst}\label{cst10}
Assume \eqref{g1} and \eqref{A2b} and that $\|u^{in}+n^{in}\|_1<4\pi/\chi$ for some $\chi>0$.	There is $C_{\ref{cst10}}(\chi)>0$ such that
\begin{equation}
	\sup\limits_{t\geq0}\left(\|w(t)\|_{\infty}+\|v(t)\|_{\infty}\right)\leq C_{\ref{cst10}}(\chi)\,.
\end{equation}
\end{proposition}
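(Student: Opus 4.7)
Proposition~\ref{vbd1} combined with \eqref{y3} reduces the task to obtaining a uniform $L^\infty$ bound on $w$, since $v \le (1+\varepsilon) S + L_1(\varepsilon) \le (1+\varepsilon)(w + \|n^{in}\|_\infty/\beta) + L_1(\varepsilon)$. The strategy is to upgrade the time-independent $H^1$-in-space bound of Lemma~\ref{wH1a} to an $L^\infty$ bound via a Moser-type iteration, fueled by the Brezis--Merle smoothing effect encoded in Lemma~\ref{lm2e} together with the decay \eqref{A2b}.

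\textbf{Step 1: exponential integrability.} Thanks to the conservation identity \eqref{e0}, the subcritical mass condition \eqref{massini} yields $\|u(t)+n(t)\|_1 < 4\pi/\chi$ for all $t \ge 0$. Applying Lemma~\ref{lm2e} to $\mA S = u+n$ provides, for every $R < 4\pi/\|u^{in}+n^{in}\|_1$, a time-uniform bound $\int_\Omega e^{R S(t,x)}\,\rd x \le C(R)$. Since the mass is strictly less than $4\pi/\chi$, we may pick $R > \chi$; then invoking Proposition~\ref{vbd1} with $\varepsilon>0$ small enough that $(1+\varepsilon)\chi < R$ transfers this to $v$, giving $\int_\Omega e^{\chi' v(t,x)}\,\rd x \le C$ for some $\chi' > \chi$. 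The decay hypothesis \eqref{A2b} furnishes $b>0$ with $1/\gamma(s) \le b\,e^{\chi s}$ for $s\ge v_*$, hence $1/\gamma(v(t,\cdot))$ lies in a uniformly bounded set of $L^\kappa(\Omega)$ for some $\kappa = \chi'/\chi > 1$.

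\textbf{Step 2: Moser-type iteration.} Rewriting \eqref{var1} in the quasilinear form
\begin{equation*}
\partial_t w - \gamma(v) \Delta w + \beta \gamma(v) w = \beta \mA^{-1}[u\gamma(v)] + \mA^{-1}[uf(n)]\,,
\end{equation*}
I would test against $w^{q-1}$ for $q\ge 2$. Thanks to the elliptic comparison principle, the right-hand side is dominated pointwise by $(\beta\gamma^* + f^*) w$. Young's inequality absorbs into the diffusion the cross term $\int \gamma'(v) w^{q-1} \nabla w \cdot \nabla v\,\rd x$ at the expense of a remainder $\int [\gamma'(v)]^2/\gamma(v)\, w^q |\nabla v|^2\,\rd x$. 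This remainder is controlled by H\"older's inequality, using on one hand the $L^\kappa$-control on $1/\gamma(v)$ from Step~1 and on the other hand $L^p$-control on $\nabla v$ coming from elliptic/parabolic regularity for \eqref{cp2}, where the right-hand side $u = \mA w$ is estimated through the uniform $H^1$-bound on $w$ combined with 2D Sobolev embeddings $H^1(\Omega) \hookrightarrow L^p(\Omega)$, $p<\infty$. The resulting recursion on $\|w\|_{2^j}$ fits the framework of Lemma~\ref{lmiter}, yielding $\sup_{t\ge 0}\|w(t)\|_\infty \le C$, whence $\sup_{t\ge 0}\|v(t)\|_\infty \le C$ follows via Proposition~\ref{vbd1}.

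\textbf{Main obstacle.} The hardest step is the treatment of the cross remainder in the Moser iteration: the weight $[\gamma'(v)]^2/\gamma(v)$ may diverge as $v\to\infty$, and only the strict gap $\chi' > \chi$ between the exponential integrability of $v$ (issued from the \emph{strict} subcriticality in \eqref{massini}) and the exponential decay rate of $\gamma$ from \eqref{A2b} provides $L^\kappa$-slack with $\kappa>1$, which is exactly what is required to close the iteration. This quantitative balance mirrors the known critical-mass mechanism for the two-component system~\cite{FuJi2020c,JW2020} and explains why the threshold $4\pi/\chi$ is sharp, as discussed in Remark~\ref{2Doptimal}.
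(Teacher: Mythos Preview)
Your Step~1 is correct and matches the paper's use of Lemma~\ref{lm2e}: the strict mass subcriticality \eqref{massini} produces exponential integrability of $S$ (and hence of $v$ via Proposition~\ref{vbd1}) with an exponent strictly larger than $\chi$, so that $1/\gamma(v)$ lies in a time-uniform bounded set of $L^\kappa(\Omega)$ for some $\kappa>1$.

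Your Step~2, however, has a genuine gap. The cross remainder in the Moser iteration involves the weight $[\gamma'(v)]^2/\gamma(v)$, and assumption~\eqref{A2b} says nothing about $\gamma'$: it only bounds $\gamma$ from below. Under \eqref{g1} alone, $|\gamma'(s)|$ can be made arbitrarily large relative to $e^{-\chi s}$ while $\gamma$ still satisfies \eqref{A2b}, so the $L^\kappa$-slack on $1/\gamma(v)$ does \emph{not} transfer to $[\gamma'(v)]^2/\gamma(v)$. A second, lesser issue is your claimed $L^p$-control on $\nabla v$: the $H^1$-bound on $w$ from Lemma~\ref{wH1a} does not yield $u=\mA w$ in any $L^q$ with $q>1$ pointwise in time (only $u\in H^{-1}$), so the input for parabolic regularity on \eqref{cp2} is missing. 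One can salvage some time-averaged integrability of $u$ from the dissipation term $\int_t^{t+1}\!\int_\Omega u^2\gamma(v)\,\rd x\rd s$ combined with $1/\gamma(v)\in L^\kappa$, but the $\gamma'$ obstruction remains.

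The paper avoids both difficulties with a much shorter argument that uses no Moser iteration. Since $N=2$, elliptic regularity gives $\|w\|_\infty\le C(p)\|u\|_p$ for any $p\in(1,2)$. A single H\"older split
\[
\|u\|_p \le \left(\int_\Omega u^2\gamma(v)\,\rd x\right)^{1/2}\left(\int_\Omega \gamma(v)^{-p/(2-p)}\,\rd x\right)^{(2-p)/(2p)}
\]
bounds the second factor uniformly via Step~1 (choosing $p$ close enough to $1$), so that $\|w(t)\|_\infty^2\le C\!\int_\Omega u^2\gamma(v)\,\rd x$. Lemma~\ref{wH1a} then yields $\int_t^{t+1}\|w(s)\|_\infty\,\rd s\le C$. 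Finally, the pointwise differential inequality $\partial_t w\le(\beta\gamma^*+f^*)w$, obtained from \eqref{var1} and the comparison principle, converts this time-averaged bound into a uniform one: $\|w(t+1)\|_\infty\le e^{\beta\gamma^*+f^*}\|w(s)\|_\infty$ for $s\in[t,t+1]$, and integrating in $s$ gives $\sup_{t\ge 0}\|w(t)\|_\infty\le C$. No derivative of $\gamma$ and no bound on $\nabla v$ are ever needed.
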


\begin{proof} Let $p\in (1,2)$ and $\varepsilon>0$ to be specified later. Since $u=\mA[w]$ by \eqref{defw}, we infer from the Sobolev embedding theorem, H\"{o}lder's inequality,  and \eqref{cond_gamma0} that
\begin{align}\label{winfty}
	\|w\|_{\infty} \leq &C(p) \|w\|_{W^{2,p}} \le C(p)\|u\|_{p} \nonumber\\
	\leq & C(p)\left(\int_\Omega u^2\gamma(v)\,\rd x\right)^{\frac{1}{2}} \left(\int_\Omega (\gamma(v))^{-\frac{p}{2-p}}\,\rd x\right)^{\frac{2-p}{2p}}\nonumber\\
	\leq&C(p,\chi) \left(\int_\Omega u^2\gamma(v)\,\rd x\right)^{\frac{1}{2}} \left(\int_\Omega e^{\frac{\chi p}{2-p}v}\,\rd x\right)^{\frac{2-p}{2p}} \nonumber\\
	\leq& C(p,\chi,\varepsilon) \left(\int_\Omega u^2\gamma(v)\,\rd x\right)^{\frac{1}{2}} \left(\int_\Omega e^{\frac{\chi p}{2-p}(1+\varepsilon)S}\,\rd x\right)^{\frac{2-p}{2p}}\,.
\end{align}
Choosing $p=p_\chi \triangleq \frac{2+4\varepsilon_\chi}{2+3\varepsilon_\chi}\in (1,2)$ with $\varepsilon_\chi$ defined in \eqref{y13}, we observe that
\begin{equation*}
	\frac{\chi p_\chi}{(2-p_\chi)} (1+\varepsilon_\chi) = \chi (1+2\varepsilon_\chi)
\end{equation*}
and we deduce from \eqref{y12} and \eqref{winfty} (with $\varepsilon=\varepsilon_\chi$ and $p=p_\chi$) that
\begin{equation*}
	\|w\|_{\infty}\leq C(\chi) \left(\int_\Omega u^2\gamma(v)\,\rd x\right)^{\frac{1}{2}}.
\end{equation*}	
Combining the above estimate with Lemma~\ref{wH1a}, we obtain, for $t\geq 0$,
\begin{equation}
	\int_t^{t+1}\|w(s)\|_{\infty}\,\rd s\leq \left( \int_t^{t+1} \|w(s)\|_\infty^2\,\rd s \right)^{\frac{1}{2}} \le C(\chi) \left( \int_t^{t+1}\int_\Omega u^2\gamma(v)\,\rd x\rd s \right)^{\frac{1}{2}}\leq C(\chi)\,. \label{y14}
\end{equation}
Now, observing that \eqref{gvup}, \eqref{fup}, \eqref{var1}, the non-negativity of $u$ and $\gamma$, and the elliptic comparison principle imply that
\begin{equation*}
	\partial_t w \le \partial_t w + u\gamma(v) = \mA^{-1}\left[ \beta u\gamma(v) + u f(n) \right] \le (\beta \gamma^* + f^*) \mA^{-1}[u] = (\beta \gamma^* + f^*) w\,,
\end{equation*}
we realize that $w(t+1,x) \le e^{(\beta \gamma^* + f^*)(t+1-s)} w(s,x)$ for all $(s,x)\in (t,t+1)\times\Omega$. Consequently,
\begin{equation*}
	\|w(t+1)\|_\infty e^{(\beta \gamma^* + f^*)(s-t-1)} \le \|w(s)\|_\infty\,, \qquad s\in [t,t+1]\,,
\end{equation*}
and it follows from \eqref{y14} and the above inequality after integration with respect to $s$ over $[t,t+1]$ that 
\begin{equation*}
	\|w(t+1)\|_\infty \frac{1 - e^{-(\beta \gamma^* + f^*)}}{\beta\gamma^*+f^*} \le C(\chi)\,.
\end{equation*}
Since we already know that $\|w(t)\|_\infty \le C$ for $t\in [0,1]$ by Lemma~\ref{keylem1}, we have thus proved that
\begin{equation*}
	\|w(t)\|_\infty \leq C(\chi) \qquad\text{for}\;\;t\geq 0\,.
\end{equation*} 
The claimed boundedness of $v$ now readily follows in view of \eqref{y7b} and the non-negativity of $v$.
\end{proof}

\begin{proof}[Proof of Theorem~\ref{TH0}]
	With Proposition~\ref{propLpu} at hand, we argue in the same manner as in the proof of Theorem~\ref{TH2} in the next section, to which we refer. 
\end{proof}

\section{Uniform-in-time Boundedness in Higher Dimensions}\label{ubcs}

In this section, we turn to the higher dimensional case $N\geq3$ and recall that, besides \eqref{g1}, we assume that the growth of $1/\gamma$ at infinity is monitored by \eqref{gamma2} with $k<N/(N-2)$ and $k-l<2/(N-2)$. We further assume that, either the growth of $s\mapsto s\gamma(s)$ is balanced by that of $\Gamma$ as requested by \eqref{A3}, or that the parameter $l$ in \eqref{gamma2} satisfies $l>(N-4)_+/(N-2)$.

Throughout this section, $C$ and $(C_i)_{i\ge 11}$ denote positive constants depending only on $\Omega$, $\gamma$, $f$, $\tau$, $\beta$, the initial data, the constants $B$ and $A$ defined in Proposition~\ref{vbd1} and in Propositions~\ref{Lemrev} and~\ref{Lemrevbis}, respectively, and the parameters $k$ and $l$ involved in \eqref{gamma2}. Dependence upon additional parameters will be indicated explicitly.

\subsection{Time-independent upper bound estimates}\label{sec3}

First, we derive a uniform-in-time upper bound  for $v$  by a Moser type iteration under the assumptions of Theorem~\ref{TH2}. More precisely, we prove the following result.

\begin{proposition}\label{propunib}
 There is $v^*>0$ depending only on $\Omega$, $\gamma$, $f$, $\tau$, $\beta$, the initial data, the constants $B$ and $A$ defined in Proposition~\ref{vbd1} and in  Propositions~\ref{Lemrev} and~\ref{Lemrevbis}, respectively, and the parameters $k$ and $l$ involved in \eqref{gamma2} such that
\begin{equation*}
	\sup\limits_{t\geq0} \|v(t)\|_{\infty}\leq v^*\,.
\end{equation*}
\end{proposition}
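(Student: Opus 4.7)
The strategy is to reduce the problem to a uniform-in-time $L^\infty$-bound on the auxiliary function $w=\mA^{-1}[u]$; the conclusion then follows from the upper control $v \le Bw$ provided by Proposition~\ref{vbd1}. The $L^\infty$-bound on $w$ will be produced by an Alikakos--Moser iteration applied to a scalar parabolic inequality for $w$ alone, derived from the key identity~\eqref{var1}, the two-sided comparison $Aw\le v\le Bw$ (from Proposition~\ref{vbd1} combined with Proposition~\ref{Lemrev} or Proposition~\ref{Lemrevbis}, according to whether \eqref{A3} or the constraint $l>(N-4)_+/(N-2)$ is assumed), and an elliptic comparison identity for the nonlocal source term.

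Since $\gamma$ is non-increasing, the two-sided bound $Aw \le v \le Bw$ yields $\gamma(Bw) \le \gamma(v) \le \gamma(Aw)$, so that \eqref{var1} together with $0 \le f(n)\le f^*$ gives the pointwise inequality
\begin{equation*}
	\partial_t w + u\gamma(Bw) \le \beta\,\mA^{-1}[u\gamma(Aw)] + f^*\,w\,.
\end{equation*}
To turn the nonlocal right-hand side into a local expression, I exploit that $u=-\Delta w+\beta w$ together with $\nabla\Gamma(Aw)=A\gamma(Aw)\nabla w$ and $\gamma'\le 0$, which imply $\Delta\Gamma(Aw)\le A\gamma(Aw)\Delta w$ pointwise. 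This gives
\begin{equation*}
	Au\gamma(Aw) \le \mA[\Gamma(Aw)] + \beta\bigl(Aw\gamma(Aw)-\Gamma(Aw)\bigr) \le \mA\bigl[\Gamma(Aw)+\gamma(Aw_*)\bigr]\,,
\end{equation*}
where the last step uses Lemma~\ref{lemGam} together with the lower bound $w\ge w_*$ of \eqref{lowbsw}. Since $\Gamma(Aw)$ inherits Neumann data from $w$ and $\mA^{-1}$ is positivity-preserving, applying $\mA^{-1}$ and using Lemma~\ref{upG} delivers the closed scalar inequality
\begin{equation*}
	\partial_t w + u\gamma(Bw) \le \tfrac{\beta}{A}\Gamma(Aw) + f^*w + C\,,
\end{equation*}
whose right-hand side grows at most like $w^{(1-l)_+}$ (with a logarithmic or bounded substitute when $l\ge 1$).

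Next, testing with $w^{p-1}$ for $p$ large and using $u=-\Delta w+\beta w$, integration by parts produces
\begin{equation*}
	(p-1)\int_\Omega w^{p-2}\gamma(Bw)|\nabla w|^2 + B\int_\Omega w^{p-1}\gamma'(Bw)|\nabla w|^2 + \beta\int_\Omega w^p\gamma(Bw)\,.
\end{equation*}
The middle, sign-indefinite term is controlled thanks to assumption~\eqref{gamma2}, which implies $s|\gamma'(s)|\le C\gamma(s)$ for $s$ large, so that $(p-1)$ eventually beats the correction and a net positive dissipation survives. Using the lower bound $\gamma(Bw)\ge c\,w^{-k}$ from \eqref{gamma2} together with the change of variable $z=w^{(p-k)/2}$ converts the surviving dissipation into $c_p\|\nabla z\|_2^2$, while the source is bounded by $C\|z\|_{q_p}^{q_p}$ with $q_p=2(p-l)/(p-k)$. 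A Gagliardo--Nirenberg interpolation between $L^{q_p}$ and the base norm $\|w\|_{p_{j-1}}$, controlled by $\|\nabla z\|_2^2$ via the Sobolev embedding, produces a recursion for $\eta_j \triangleq \sup_{t\ge 0} \|w(t)\|_{p_j}^{p_j}$ along a geometric sequence $p_j$. The conditions $k<N/(N-2)$ and $k-l<2/(N-2)$ guarantee that the interpolation exponent is strictly sub-critical, so Young's inequality absorbs the source term into the dissipation; Lemma~\ref{lmiter} then yields $\sup_{t\ge 0}\|w(t)\|_\infty <\infty$, and Proposition~\ref{vbd1} finishes the proof.

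The main obstacle will be Step~3: the iteration must be calibrated in the presence of three competing quantities---a degenerate dissipation coefficient $\gamma(Bw)$ decaying like $w^{-k}$, a source growing like $w^{(1-l)_+}$, and the product-rule correction coming from $\gamma'(Bw)$ which must be absorbed uniformly in $p$. The algebraic constraints $k<N/(N-2)$ and $k-l<2/(N-2)$ are precisely what make these three effects balance exactly at the Alikakos--Moser level, so that the recursion of Lemma~\ref{lmiter} can be closed.
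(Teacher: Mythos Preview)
Your overall strategy is close to the paper's: both reduce to a scalar parabolic inequality for an auxiliary variable, use the two-sided comparison to replace $\gamma(v)$ by powers of that variable, bound the nonlocal source via $\Gamma$, and run an Alikakos--Moser iteration. There are, however, two points to flag.

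First, a genuine gap. Your claim that assumption~\eqref{gamma2} implies $s|\gamma'(s)|\le C\gamma(s)$ for large $s$ is incorrect: \eqref{gamma2} constrains only the size of $\gamma$, not of $\gamma'$. For instance, a smooth non-increasing $\gamma$ that is close to $2^{-kn}$ on plateaus $[2^n+1,2^{n+1}]$ and drops sharply on the unit intervals $[2^n,2^n+1]$ satisfies \eqref{gamma2} with $k=l$, yet $s|\gamma'(s)|/\gamma(s)$ is unbounded along the drops. Consequently the ``middle, sign-indefinite term'' in your integration-by-parts cannot be absorbed as you propose. The remedy is to reverse the order of operations: invoke \eqref{gamma2} to bound $\gamma(Bw)\ge c\,w^{-k}$ \emph{before} integrating by parts, so that the dissipation becomes
\[
\int_\Omega u\,w^{p-k-1}\,\mathrm{d}x=(p-k-1)\int_\Omega w^{p-k-2}|\nabla w|^2\,\mathrm{d}x+\beta\|w\|_{p-k}^{p-k},
\]
and no $\gamma'$ ever appears. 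This is exactly what the paper does (see the derivation of Lemma~\ref{lmp}).

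Second, a structural difference. The paper runs the iteration on $\tilde S=B\,\mathcal A^{-1}[u+n]$ rather than on $w=\mathcal A^{-1}[u]$. The advantage is that the $uf(n)$ contributions from \eqref{cp1} and \eqref{cpn} cancel in the sum, so the key identity \eqref{var0} leads (Lemma~\ref{lem01}) to $\partial_t\tilde S+B\gamma_0(\tilde S)(u+n)\le\frac{\beta B}{A}\Gamma(\tilde S)+C$ with a \emph{bounded} remainder, whereas your $w$-based inequality carries the extra linear source $f^* w$. That term is ultimately absorbable---the paper itself handles a linear right-hand side $C_{\ref{cst16}}\tilde S$ in its final Moser step---but doing so already at the stage where one establishes the uniform $L^p$-bounds (the analogue of Lemma~\ref{lmvp}) requires an additional interpolation and bootstrap that you have not spelled out, and which is not covered by your stated constraints on $k$ and $l$ alone.
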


 The proof of Proposition~\ref{propunib} consists of the following lemmas. We recall that, by Proposition~\ref{vbd1} and Propositions~\ref{Lemrev} and~\ref{Lemrevbis},
 \begin{equation}
 	\hat{S}\triangleq A S \leq v \leq \tilde{S}=B S \;\;\text{ in }\;\; [ 0,\infty)\times\bar{\Omega}\,. \label{y20}
 \end{equation} 

\begin{lemma}\label{lem01}
Introducing the function
\begin{equation}
	\gamma_0(s) \triangleq \min\left\{ 1 , \frac{1}{\gamma^*} \right\} \gamma(s)\,, \qquad s>0\,, \label{y21}
\end{equation} 
the function $\tilde{S}$ satisfies the following differential inequality \refstepcounter{NumConst}\label{cst11}
\begin{equation}
\partial_t \tilde{S} +  B\gamma_0(\tilde{S})(u+n) \le \frac{\beta B}{A} \Gamma(\tilde{S}) + C_{\ref{cst11}} \;\;\text{ in }\;\; (0,\infty)\times \Omega\,. \label{e1a}
\end{equation}
\end{lemma}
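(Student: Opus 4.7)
My plan is to start from the key identity \eqref{var0} of Lemma~\ref{keylem1} multiplied by $B$, namely
\begin{equation*}
\partial_t \tilde{S} + B u\gamma(v) + B n = \beta B\,\mA^{-1}[u\gamma(v) + n]\,,
\end{equation*}
and then to convert the two dissipative terms on the left into a single expression of the form $B\gamma_0(\tilde S)(u+n)$, while controlling the non-local term on the right by $\frac{\beta B}{A}\Gamma(\tilde S)+C$.

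For the left-hand side, the upper part of the two-sided control \eqref{y20} gives $v\le \tilde S$ and the monotonicity~\eqref{g1} of $\gamma$ yields $\gamma(v)\ge \gamma(\tilde S)\ge \gamma_0(\tilde S)$, the last inequality being a direct consequence of the definition \eqref{y21} of $\gamma_0$. For the $n$-term, the lower bound $v\ge v_*$ from \eqref{e00} together with $v\le \tilde S$ ensures $\tilde S\ge v_*$; since $\gamma_0$ is non-increasing and $\gamma_0(v_*)=\min\{1,1/\gamma^*\}\gamma^*\le 1$, we deduce $\gamma_0(\tilde S)\le 1$, hence $Bn\ge B\gamma_0(\tilde S)n$. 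Combining these two observations reduces the problem to proving
\begin{equation*}
\beta B\,\mA^{-1}[u\gamma(v)+n]\le \frac{\beta B}{A}\Gamma(\tilde S)+C_{\ref{cst11}}\,.
\end{equation*}

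The main technical step is to bound $\mA^{-1}[u\gamma(v)]$. Here I would use the \emph{lower} part of \eqref{y20}, namely $v\ge \hat S$, together with the monotonicity of $\gamma$, to obtain $u\gamma(v)\le u\gamma(\hat S)\le (u+n)\gamma(\hat S)$. Then, exactly as in the derivation of \eqref{S0}, a direct computation using the chain rule, the identity $\mA\hat S = A(u+n)$ coming from \eqref{defS}, and the sign property $\gamma'(\hat S)\le 0$ from~\eqref{g1} gives
\begin{equation*}
\mA[\Gamma(\hat S)]\ge A\gamma(\hat S)(u+n)+\beta\bigl(\Gamma(\hat S)-\hat S\gamma(\hat S)\bigr)\,.
\end{equation*}
Applying $\mA^{-1}$ (via the elliptic comparison principle) and invoking Lemma~\ref{lemGam} with $s_0=AS_*$ to control $\hat S\gamma(\hat S)-\Gamma(\hat S)\le \gamma(AS_*)$ produces
\begin{equation*}
A\,\mA^{-1}[\gamma(\hat S)(u+n)]\le \Gamma(\hat S)+\gamma(AS_*)\,.
\end{equation*}
Since $\hat S\le\tilde S$ (because $A\le B$) and $\Gamma$ is non-decreasing by~\eqref{g1}, the right-hand side is bounded by $\Gamma(\tilde S)+\gamma(AS_*)$. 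Finally the contribution of $n$ is estimated by $\mA^{-1}[n]\le \|n^{in}\|_\infty/\beta$ using~\eqref{nup}, and combining everything gives the desired inequality with $C_{\ref{cst11}}\triangleq \beta B\gamma(AS_*)/A + B\|n^{in}\|_\infty$.

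I expect no serious obstacle here: the computation is essentially a careful bookkeeping exercise that hinges on deploying both sides of the two-sided control \eqref{y20} at the right places---the upper bound $v\le \tilde S$ to make the dissipative terms appear on the left, and the lower bound $v\ge \hat S$ to convert the non-local source on the right into a pointwise expression involving $\Gamma(\tilde S)$.
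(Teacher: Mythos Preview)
Your proposal is correct and follows essentially the same route as the paper's own proof: both start from the key identity~\eqref{var0} multiplied by $B$, use the upper bound $v\le\tilde S$ from~\eqref{y20} to convert the dissipative terms into $B\gamma_0(\tilde S)(u+n)$, and then use the lower bound $v\ge\hat S$ together with the computation $\mA[\Gamma(\hat S)]\ge A\gamma(\hat S)(u+n)+\beta(\Gamma(\hat S)-\hat S\gamma(\hat S))$ and Lemma~\ref{lemGam} to control the non-local source by $\tfrac{1}{A}\Gamma(\tilde S)+C$. The only cosmetic difference is in the treatment of the $n$-term on the left: you invoke $\gamma_0(\tilde S)\le\gamma_0(v_*)\le 1$ directly, whereas the paper writes $n\ge (n/\gamma^*)\gamma(v)$ before factoring---both arguments are equally valid and equally short.
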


\begin{proof}
First, we recall the key identity \eqref{var0}:
\begin{equation*}
	\partial_t S + u\gamma(v) + n = \beta \mA^{-1}[u\gamma(v)+n]\;\;\text{ in }\;\; (0,\infty)\times \Omega\,.
\end{equation*}
On the one hand, thanks to \eqref{gvup}, \eqref{y20}, and the non-increasing property of $\gamma_0$, there holds
\begin{equation*}
	B(u\gamma(v)+n)\geq B \left( u \gamma(v) + \frac{n}{\gamma^*} \gamma(v) \right) \ge B \gamma_0(v) (u+n) \geq B \gamma_0(\tilde{S})(u+n)\,.
\end{equation*}
It follows that
\begin{equation}\label{var0B}
	\partial_t\tilde{S}+ B \gamma_0(\tilde{S})(u+n)\leq \beta B \mA^{-1}[u\gamma(v)+n]\;\;\text{ in }\;\; (0,\infty)\times \Omega\,.
\end{equation}

On the other hand, recalling that $\varphi=\mA^{-1}[u\gamma(v)]$, see \eqref{defvarphi}, we infer from  \eqref{y20}, the definition \eqref{defS} of $S$, and the monotonicity \eqref{g1} of $\gamma$ that
\begin{align*}
	A \mathcal{A}[\varphi] &=  A u\gamma(v) \leq A(u+n)\gamma(v) \leq A(u+n)\gamma(\hat{S}) = A \gamma(\hat{S})\mathcal{A}[S] = (\beta \hat{S}-\Delta\hat{S})\gamma(\hat{S})\\
	&=-\Delta \Gamma(\hat{S}) + \beta \Gamma(\hat{S}) + \gamma'(\hat{S}) |\nabla \hat{S}|^2 + \beta \left( \hat{S}\gamma(\hat{S}) - \Gamma(\hat{S}) \right) \\
	& \le \mA[\Gamma(\hat{S})] + \beta \left( \hat{S}\gamma(\hat{S}) - \Gamma(\hat{S}) \right) \;\;\text{ in }\;\; (0,\infty)\times \Omega\,.
\end{align*}
In view of \eqref{lowbsw} and Lemma~\ref{lemGam} (with $s_0=AS_*$), we obtain 
\begin{equation*}
A \mathcal{A}[\varphi]\leq \mathcal{A}[\Gamma(\hat{S})] + \beta  \gamma(A S_*) = \mathcal{A}[\Gamma(\hat{S}) + \gamma(A S_*)]\;\;\text{ in }\;\; (0,\infty)\times \Omega\,,
\end{equation*}
while \eqref{cp3} guarantees that $\nabla \varphi\cdot \nu = \nabla (\Gamma(v) + \gamma(A S_*))\cdot \nu=0$ on $(0,\infty)\times\partial\Omega$, from which we deduce, according to \eqref{y20}, the elliptic comparison principle, and the increasing property of $\Gamma$,  that
\begin{equation*}
	A \varphi\leq \Gamma(\hat{S}) + \gamma(A S_*) \leq \Gamma(\tilde{S}) + \gamma(A S_*) \;\;\text{ in }\;\; (0,\infty)\times \Omega\,.
\end{equation*}
In addition, using again the elliptic comparison principle, along with \eqref{nup} (with $p=\infty$), gives
\begin{align*}
	\|\mA^{-1}[n(t)]\|_{\infty}\leq \frac{\|n(t)\|_{\infty}}{\beta} \leq \frac{\|n^{in}\|_{\infty}}{\beta} \;\;\text{ for }\;\; t\ge 0\,.
\end{align*}
Combining the above inequalities with  \eqref{var0B} completes the proof.
\end{proof}

As in \cite{JiLau2021a}, we next combine \eqref{gamma2} and \eqref{e1a} to derive a differential inequality for $\|\tilde{S}\|_p$ for $p\in (1+k,\infty)$.

\refstepcounter{NumConst}\label{cst12} 
\refstepcounter{NumConst}\label{cst13}
 
\begin{lemma}\label{lmp} 
There are positive constants $C_{\ref{cst12}}>0$ and $C_{\ref{cst13}}>0$ such that, for any $p>1+k$,
\begin{equation*}
	\frac{d}{dt}\|\tilde{S}\|_p^{p} + C_{\ref{cst12}}\frac{p(p-k-1)}{(p-k)^2} \|\nabla \tilde{S}^{\frac{p-k}{2}}\|_2^2 + C_{\ref{cst12}} p\| \tilde{S}\|_{p-k}^{p-k}\leq  C_{\ref{cst13}} p\int_\Omega\left( \tilde{S}^{p-1}\Gamma(\tilde{S})+ \tilde{S}^{p-1}\right) \mathrm{d} x\,.
\end{equation*}
\end{lemma}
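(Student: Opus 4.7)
The plan is to test the differential inequality \eqref{e1a} against the non-negative function $p\tilde S^{p-1}$ and integrate over $\Omega$. Since $u+n\ge0$ by Theorem~\ref{local}, this yields
\begin{equation*}
\frac{d}{dt}\int_\Omega \tilde S^p\,\rd x + pB\int_\Omega \tilde S^{p-1}\gamma_0(\tilde S)(u+n)\,\rd x \le \frac{p\beta B}{A}\int_\Omega\tilde S^{p-1}\Gamma(\tilde S)\,\rd x + pC_{\ref{cst11}}\int_\Omega \tilde S^{p-1}\,\rd x,
\end{equation*}
whose right-hand side is already in the form prescribed by the claim. It remains to extract both the gradient term and the $L^{p-k}$ term on the left from the single dissipative quantity $pB\int_\Omega \tilde S^{p-1}\gamma_0(\tilde S)(u+n)\,\rd x$.

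The conceptual heart of the argument, which allows one to bypass any use of $\gamma_0'$, is to apply a pointwise lower estimate on $\gamma_0$ \emph{before} performing integration by parts. Combining $\liminf_{s\to\infty}s^k\gamma(s)>0$ from \eqref{gamma2} with the monotonicity~\eqref{g1} of $\gamma$ and the uniform lower bound $\tilde S\ge AS_*>0$ furnished by \eqref{lowbsw} together with Propositions~\ref{Lemrev} and~\ref{Lemrevbis}, I will produce a $p$-independent constant $c_0>0$, depending only on $\gamma$, $\gamma^*$, $A$, $S_*$, and $k$, such that
\begin{equation*}
\gamma_0(s)\ge c_0 \, s^{-k}\quad\text{ for all } s\ge AS_*.
\end{equation*}
(On $[s_1,\infty)$ this follows directly from \eqref{gamma2}, while on $[AS_*,s_1]$ monotonicity combined with $\gamma(s)s^k\ge \gamma(s_1)(AS_*)^k$ yields the bound.) Multiplying by the non-negative factor $p\tilde S^{p-1}(u+n)$ then gives the pointwise inequality $p\tilde S^{p-1}\gamma_0(\tilde S)(u+n)\ge c_0 p\, \tilde S^{p-k-1}(u+n)$.

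Finally, I will convert $u+n$ back to a differential expression in $\tilde S$ via $B(u+n)=\mA[\tilde S]=-\Delta\tilde S+\beta\tilde S$, and integrate by parts using the Neumann condition $\nabla\tilde S\cdot\nu=0$ inherited from $\mA^{-1}$. This produces
\begin{align*}
c_0 pB\int_\Omega \tilde S^{p-k-1}(u+n)\,\rd x
&= c_0 p\int_\Omega \tilde S^{p-k-1}(-\Delta\tilde S+\beta\tilde S)\,\rd x \\
&= c_0 p(p-k-1)\int_\Omega \tilde S^{p-k-2}|\nabla\tilde S|^2\,\rd x + c_0\beta p\int_\Omega \tilde S^{p-k}\,\rd x,
\end{align*}
where $p-k-1>0$ since $p>k+1$. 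Rewriting $\tilde S^{p-k-2}|\nabla\tilde S|^2=\frac{4}{(p-k)^2}|\nabla\tilde S^{(p-k)/2}|^2$ and setting $C_{\ref{cst12}}\triangleq c_0\min\{4,\beta\}$ together with $C_{\ref{cst13}}\triangleq \max\{\beta B/A,\,C_{\ref{cst11}}\}$ delivers the claimed differential inequality. The main subtlety is simply the order of operations just described: a naive integration by parts of $\int\tilde S^{p-1}\gamma_0(\tilde S)\mA[\tilde S]$ would produce the problematic term $\int\tilde S^{p-1}\gamma_0'(\tilde S)|\nabla\tilde S|^2$, which has an indeterminate size under \eqref{gamma2} alone; reversing the order lets one avoid any dependence on $\gamma_0'$ and, importantly, yields constants $C_{\ref{cst12}}, C_{\ref{cst13}}$ independent of $p$, as required for the forthcoming Alikakos--Moser iteration via Lemma~\ref{lmiter}.
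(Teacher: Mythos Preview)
Your proof is correct and follows the same approach as the paper's: establish the pointwise lower bound $\gamma_0(s)\ge c_0 s^{-k}$ on the range of $\tilde S$ using \eqref{gamma2} and the monotonicity of $\gamma$, then test \eqref{e1a} against $p\tilde S^{p-1}$ and integrate by parts via $B(u+n)=\mA[\tilde S]$. One small slip: the lower bound on $\tilde S$ is $\tilde S=BS\ge BS_*$, which follows directly from \eqref{lowbsw} and the definition of $\tilde S$ in Proposition~\ref{vbd1}; Propositions~\ref{Lemrev} and~\ref{Lemrevbis} are not needed at this point.
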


\begin{proof}
First, the definition \eqref{y21} of $\gamma_0$ and assumption~\eqref{gamma2} guarantee that there exist  $b>0$ and $s_b>s_0$ depending only on $\gamma$ such that, for $s\geq s_b$,
\begin{equation*}
\frac{1}{\gamma_0(s)} = \frac{\gamma^*}{\min\{1,\gamma^*\}\gamma(s)}\leq b s^k\,,
\end{equation*}
while the monotonicity of $\gamma_0$ ensures that
\begin{equation*}
\frac{1}{\gamma_0(s)}\leq \frac{1}{\gamma_0(s_b)} \le \frac{s^k}{B^kS_*^k \gamma_0(s_b)}\,, \qquad s\in [BS_*,s_b]\,.
\end{equation*}
Therefore, 
\begin{equation}\label{cond_gamma}
\frac{1}{\gamma_0(s)}\leq C s^{k}\,, \qquad s \in [B S_*,\infty)\,.
\end{equation}
Now, multiplying the inequality \eqref{e1a} by $\tilde{S}^{p-1}$ for some $p>1+k$ and integrating over $\Omega$, we obtain
\begin{equation*}
\frac{1}{p}\frac{d}{dt}\|\tilde{S}\|_p^p + B \int_\Omega (u+n)\gamma_0(\tilde{S})\tilde{S}^{p-1}\ \mathrm{d} x \leq \frac{\beta B}{A}\int_\Omega \tilde{S}^{p-1}\Gamma(\tilde{S})\ \mathrm{d} x + C_{\ref{cst11}} \|\tilde{S}\|_{p-1}^{p-1}\,.
\end{equation*}	
Thanks to \eqref{cond_gamma} and the fact $\tilde{S}=B S\geq B S_*$, see \eqref{lowbsw}, we derive  that
\begin{equation}
B \int_\Omega (u+n)\gamma_0(\tilde{S})\tilde{S}^{p-1}\ \mathrm{d} x \ge B C\int_\Omega \tilde{S}^{p-k-1}(u+n)\ \mathrm{d} x. \label{ile}
\end{equation}
Next, recalling that $\beta \tilde{S}-\Delta \tilde{S}=B(u+n)$ by the definition \eqref{defS} of $S$, we observe that
\begin{align*}
B \int_\Omega \tilde{S}^{p-k-1}(u+n)\ \mathrm{d} x & = \int_\Omega \tilde{S}^{p-k-1}(\beta \tilde{S}-\Delta \tilde{S})\ \mathrm{d} x \\
& = \beta \|\tilde{S}\|_{p-k}^{p-k} + (p-k-1) \int_\Omega \tilde{S}^{p-k-2} |\nabla \tilde{S}|^2\ \mathrm{d} x \\
& =  \beta \|\tilde{S}\|_{p-k}^{p-k} + \frac{4(p-k-1)}{(p-k)^2} \|\nabla \tilde{S}^{\frac{p-k}{2}}\|_2^2.
\end{align*}
Collecting the above estimates leads to the stated differential inequality and completes the proof.
\end{proof}

Next, we state the following lemma which can be proved  in the same way as  \cite[Lemma~4.4]{JiLau2021a}.

\begin{lemma}\label{lmrhs} \refstepcounter{NumConst}\label{cst14}
For any 
\begin{equation*}
p> \max\left\{\frac{2(N-1)}{N-2},l+\frac{N(k-l)}{2}\right\}
\end{equation*} 
and $\varepsilon>0$, there holds
\begin{equation}\label{rhs}
	\int_\Omega \left(\tilde{S}^{p-1}\Gamma(\tilde{S})+ \tilde{S}^{p-1}\right)\mathrm{d} x\leq \varepsilon\|\tilde{S}^{\frac{p-k}{2}}\|_{H^1}^2 + C_{\ref{cst14}}(\varepsilon,p)\,, \qquad t\ge 0\,.
\end{equation}
\end{lemma}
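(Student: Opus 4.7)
The strategy is to reduce the inequality to a Gagliardo--Nirenberg interpolation between a subcritical $L^{q_0}$-norm of $\tilde{S}$ and the $H^1$-norm of $f\triangleq \tilde{S}^{(p-k)/2}$, using as a hinge a time-independent $L^r$-bound on $\tilde{S}$ for some $r<N/(N-2)$. Once a suitable integrability exponent $q_0<N(p-k)/(N-2)$ is identified, standard interpolation delivers the estimate, the small constant $\varepsilon$ coming from a final Young's inequality.

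I would start by using Lemma~\ref{upG} to dominate $\Gamma(\tilde{S})$ according to the value of $l$ in~\eqref{gamma2}: when $l>1$, $\Gamma$ is bounded; when $l=1$, $\Gamma(\tilde{S})\le C\log\tilde{S}\le C(\delta)(1+\tilde{S}^\delta)$ for any $\delta>0$; and when $l\in[0,1)$, $\Gamma(\tilde{S})\le C(1+\tilde{S}^{1-l})$. Combining with the elementary bound $\tilde{S}^a\le 1+\tilde{S}^b$ for $0\le a\le b$ and the lower bound $\tilde{S}\ge BS_*$ from~\eqref{lowbsw}, the integrand in the left-hand side of~\eqref{rhs} is pointwise controlled by $C(1+\tilde{S}^{q_0})$, where $q_0=p-l$ when $l\in[0,1)$, $q_0=p-1+\delta$ (with $\delta>0$ to be chosen small) when $l=1$, and $q_0=p-1$ when $l>1$. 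A direct algebraic check using the assumptions $k-l<2/(N-2)$, $k<N/(N-2)$, and $p>\max\{2(N-1)/(N-2),l+N(k-l)/2\}$ then shows that, in each subcase, $q_0$ is strictly below the Sobolev exponent $\sigma_*\triangleq N(p-k)/(N-2)$ associated with the embedding of $H^1(\Omega)$ into $L^{2N/(N-2)}(\Omega)$ applied to $f$.

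The mass conservation~\eqref{e0} gives $\|u(t)+n(t)\|_1=\|u^{in}+n^{in}\|_1$ for every $t\ge 0$, so Lemma~\ref{lm2} supplies a time-independent bound $\|\tilde{S}(t)\|_r\le M_r$ for every $r\in[1,N/(N-2))$. Fixing such $r$ sufficiently close to $N/(N-2)$ so that $q_0<\sigma_*$ is preserved strictly with room to spare, I would apply Gagliardo--Nirenberg to $f=\tilde{S}^{(p-k)/2}$ to obtain
\begin{equation*}
\int_\Omega \tilde{S}^{q_0}\,\mathrm{d}x=\|f\|_{2q_0/(p-k)}^{2q_0/(p-k)}\le C\,\|f\|_{H^1}^{2\theta q_0/(p-k)}\,\|\tilde{S}\|_r^{(1-\theta)q_0}\le C\,M_r^{(1-\theta)q_0}\,\|f\|_{H^1}^{2\theta q_0/(p-k)},
\end{equation*}
with $\theta\in(0,1)$ determined by the usual scaling relation. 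The strict inequality $q_0<\sigma_*$ translates, for such $r$, to $2\theta q_0/(p-k)<2$, and Young's inequality yields the desired $\varepsilon\|f\|_{H^1}^2+C(\varepsilon,p)$, which completes the proof after recalling the pointwise domination from the previous step.

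The main obstacle is the case-by-case bookkeeping needed to verify $q_0<\sigma_*$, especially the logarithmic case $l=1$ in which the auxiliary parameter $\delta$ must be chosen in coordination with the interpolation parameter $r$; the rest of the argument is entirely standard and follows exactly as in \cite[Lemma~4.4]{JiLau2021a}.
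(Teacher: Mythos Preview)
Your proposal is correct and follows exactly the approach the paper intends: the paper gives no self-contained proof but simply refers to \cite[Lemma~4.4]{JiLau2021a}, and your outline---bounding $\Gamma(\tilde S)$ via Lemma~\ref{upG} to reduce to $\int_\Omega \tilde S^{q_0}\,\mathrm{d}x$ with $q_0\in\{p-l,\,p-1+\delta,\,p-1\}$, then interpolating $\tilde S^{(p-k)/2}$ via Gagliardo--Nirenberg against the time-independent $L^r$-bound on $\tilde S$ furnished by~\eqref{e0} and Lemma~\ref{lm2}, and closing with Young's inequality---is precisely that argument. One small clarification: the inequality $2\theta q_0/(p-k)<2$ is not a direct consequence of $q_0<\sigma_*$ alone but rather of the sharper relation $q_0<p-k+2r/N$, which in turn follows case-by-case from $k-l<2/(N-2)$ (when $l<1$) or $k<N/(N-2)$ (when $l\ge 1$) once $r$ is taken close enough to $N/(N-2)$; this is where both lower bounds on $p$ in the hypothesis are used.
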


Then, in the same manner as in \cite[Lemma~4.5]{JiLau2021a}, we can derive from Lemma~\ref{lmp} and Lemma~\ref{lmrhs} the following uniform-in-time estimates for $\tilde{S}$ in $L^p(\Omega)$ for any $p\geq1$.

\begin{lemma}\label{lmvp} \refstepcounter{NumConst}\label{cst15}
	For any $p \geq 1$, there is $C_{\ref{cst15}}(p)>0$ such that
	\begin{equation*}
		\|\tilde{S}(t)\|_p\leq C_{\ref{cst15}}(p)\,, \qquad t\geq 0\,.
	\end{equation*}
\end{lemma}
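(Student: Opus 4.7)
The plan is to chain Lemmas~\ref{lmp} and~\ref{lmrhs} into a closed autonomous differential inequality of the form $y' + c\,y^{\theta}\le C$ with $\theta\in(0,1)$ for $y(t)=\|\tilde S(t)\|_p^p$, which yields a uniform-in-time bound by ODE comparison; smaller exponents are then handled by H\"older's inequality on the bounded domain $\Omega$.

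First I would fix $p$ larger than
\[
p_\star \triangleq \max\left\{1+k,\ \tfrac{2(N-1)}{N-2},\ l+\tfrac{N(k-l)}{2},\ \tfrac{Nk}{2}\right\},
\]
which is finite precisely because of the assumptions $k<N/(N-2)$ and $k-l<2/(N-2)$. Since $\|\tilde S^{(p-k)/2}\|_{H^1}^2=\|\nabla\tilde S^{(p-k)/2}\|_2^2+\|\tilde S\|_{p-k}^{p-k}$, the left-hand side of the inequality in Lemma~\ref{lmp} dominates $c(p)\|\tilde S^{(p-k)/2}\|_{H^1}^2$ for some $c(p)>0$. Choosing $\varepsilon$ in Lemma~\ref{lmrhs} small enough (depending on $p$) to absorb this quantity into the left-hand side produces
\[
\frac{d}{dt}\|\tilde S\|_p^p + c(p)\,\|\tilde S^{(p-k)/2}\|_{H^1}^2 \le C(p),\qquad t\ge 0.
\]

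Next I would apply the Sobolev embedding $H^1(\Omega)\hookrightarrow L^{2N/(N-2)}(\Omega)$ (valid for $N\ge 3$), which gives
\[
\|\tilde S^{(p-k)/2}\|_{H^1}^2 \ge c\,\|\tilde S\|_{N(p-k)/(N-2)}^{p-k}.
\]
The condition $p\ge Nk/2$ is exactly what makes $N(p-k)/(N-2)\ge p$, and H\"older's inequality on the bounded set $\Omega$ then gives $\|\tilde S\|_p\le C\|\tilde S\|_{N(p-k)/(N-2)}$. Setting $y(t)=\|\tilde S(t)\|_p^p$ and combining these inequalities yields
\[
y'(t) + C(p)\,y(t)^{(p-k)/p} \le C(p),\qquad t\ge 0,
\]
where the exponent $(p-k)/p$ lies in $(0,1)$. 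Since $y(0)=B^p\|S^{in}\|_p^p$ is finite thanks to \eqref{ini} and elliptic regularity applied to the definition \eqref{defS} of $S$, a standard ODE comparison argument (there is $M(p)>0$ such that $y(t)\ge M(p)$ forces $y'(t)<0$) produces the uniform-in-time bound $\|\tilde S(t)\|_p\le C_{\ref{cst15}}(p)$ for every $p>p_\star$.

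For $1\le p\le p_\star$, the claim follows directly from H\"older's inequality on the bounded domain $\Omega$, which reduces the bound to $\|\tilde S(t)\|_q$ for any fixed $q>p_\star$ already controlled above. The main technical point to watch is the $p$-dependence of the constants $C_{\ref{cst12}}$, $C_{\ref{cst13}}$, $C_{\ref{cst14}}$ when absorbing terms in the second paragraph: one must verify that the choice $\varepsilon=\varepsilon(p)$ can indeed be made so that the resulting coefficient in front of $\|\tilde S^{(p-k)/2}\|_{H^1}^2$ remains strictly positive, but since all manipulations are algebraic in $p$ this is a routine check rather than a genuine obstacle.
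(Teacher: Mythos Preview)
Your argument is correct and follows essentially the same route the paper indicates (which defers to \cite[Lemma~4.5]{JiLau2021a}): combine the differential inequality of Lemma~\ref{lmp} with the absorption estimate of Lemma~\ref{lmrhs}, then close via the Sobolev embedding $H^1\hookrightarrow L^{2N/(N-2)}$ and an ODE comparison for $y=\|\tilde S\|_p^p$, finally downgrading small $p$ by H\"older. One minor inaccuracy in exposition: the threshold $p_\star$ is always finite once $k$ and $l$ are fixed real numbers, so its finiteness does not depend on the structural assumptions $k<N/(N-2)$ and $k-l<2/(N-2)$; those constraints enter elsewhere (notably inside the proof of Lemma~\ref{lmrhs} and in the subsequent Moser iteration), not in making $p_\star$ well-defined.
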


With the above preparation, we are ready to use a Moser's iteration technique along the lines of \cite{Alik1979} to establish the boundedness of $\tilde{S}$ in  $L^\infty(\Omega)$ as stated in Proposition~\ref{propunib}, see also \cite{JiLau2021a}.

\begin{proof}[Proof of Proposition~\ref{propunib}]
We recall that \eqref{gvup}, \eqref{var0B}, and the elliptic comparison principle imply that
\begin{equation*}
\partial_t \tilde{S} + B\gamma_0(\tilde{S})(u+n) \le \beta B \gamma^* \mA^{-1}[u] + \beta B \mA^{-1}[n]\;\;\text{ in }\;\; (0,\infty)\times \Omega\,.
\end{equation*}
Since $\mA^{-1}[u]$ and $\mA^{-1}[n]$ are both non-negative and $S=\mA^{-1}[u+n]$ by \eqref{defS}, we may bound from above the right-hand side of the above inequality by $\beta B \max\{1,\gamma^*\} \mA^{-1}[u+n] = \beta \max\{1,\gamma^*\} \tilde{S}$, thus obtaining \refstepcounter{NumConst}\label{cst16}
\begin{equation*}
	\partial_t \tilde{S} + B\gamma_0(\tilde{S})(u+n) \le C_{\ref{cst16}} \tilde{S}\;\;\text{ in }\;\; (0,\infty)\times \Omega\,,
\end{equation*}
with $C_{\ref{cst16}} =\beta \max\{\gamma^*,1\}$. Then, for any $p \geq  2+k$, 
\begin{equation*}
\frac{d}{dt} \|\tilde{S}\|_p^p + p \int_\Omega (u+n) \gamma_0(\tilde{S}) \tilde{S}^{p-1} \mathrm{d} x \le C_{\ref{cst16}} p\|\tilde{S}\|_p^p.
\end{equation*}
We also infer from \eqref{defS} and \eqref{ile} that 
\begin{align*}
B \int_\Omega (u+n) \gamma_0(\tilde{S}) \tilde{S}^{p-1} \mathrm{d} x & \ge B C \int_\Omega (u+n) \tilde{S}^{p-k-1} \mathrm{d} x = C \int_\Omega (\beta \tilde{S}-\Delta \tilde{S}) \tilde{S}^{p-k-1} \mathrm{d} x \\
& = C \|\tilde{S}\|_{p-k}^{p-k} + 4C \frac{p-k-1}{(p-k)^2} \|\nabla \tilde{S}^{\frac{p-k}{2}} \|_2^2.
\end{align*}
Observing that
\begin{equation*}
\frac{p(p-k-1)}{(p-k)^2}\ge \frac{1}{2}
\end{equation*}
for $p\ge 2+k$ and combining the above inequalities give \refstepcounter{NumConst}\label{cst17}
\begin{equation}
\frac{d}{dt} \|\tilde{S}\|_p^p + C_{\ref{cst17}} \| \tilde{S}^{\frac{p-k}{2}} \|_{H^1}^2 \le C_{\ref{cst16}} p  \|\tilde{S}\|_p^p. \label{ami1}
\end{equation}

The remaining of the proof is then similar to the corresponding argument in the proof of \cite[Proposition~4.1]{JiLau2021a}. We nevertheless sketch it below for the sake of completeness. By H\"older's, Sobolev's, and Young's inequalities,
\begin{align*}
2 C_{\ref{cst16}} p\|\tilde{S}\|_p^p  & \le 2 C_{\ref{cst16}} p \|\tilde{S}^{\frac{p-k}{2}}\|_{\frac{2N}{N-2}} \|\tilde{S}\|_{\frac{N(p+k)}{N+2}}^{\frac{p+k}{2}} \le C p \|\tilde{S}^{\frac{p-k}{2}}\|_{H^1} \|\tilde{S}\|_{\frac{N(p+k)}{N+2}}^{\frac{p+k}{2}} \\
& \le C_{\ref{cst17}} \|\tilde{S}^{\frac{p-k}{2}}\|_{H^1}^2 + C p^2 \|\tilde{S}\|_{\frac{N(p+k)}{N+2}}^{p+k}. 
\end{align*}
Combining \eqref{ami1} and the above estimate, we find 
\begin{equation}
\frac{d}{dt} \|\tilde{S}\|_p^p + C_{\ref{cst16}}p \|\tilde{S}\|_p^p \le 2C_{\ref{cst16}} p \|\tilde{S}\|_p^p - C_{\ref{cst17}} \| \tilde{S}^{\frac{p-k}{2}} \|_{H^1}^2 \le C p^2 \|\tilde{S}\|_{\frac{N(p+k)}{N+2}}^{p+k}. \label{ami2}
\end{equation}

Let us now define two sequences $(p_j)_{j\ge 0}$ and $(X_j)_{j\ge 0}$ by 
\begin{equation}
\begin{split}
p_{j+1} & = \frac{N+2}{N} p_j - k, \qquad j\ge 0, \qquad p_0 = \frac{kN}{2} + \frac{2(N-1)}{N-2}, \\ 
X_j & = \sup_{t\ge 0} \|\tilde{S}(t)\|_{p_j}^{p_j}, \qquad j\ge 0.
\end{split} \label{ami3}
\end{equation}
Note that $X_j<\infty$ for all $j\ge 0$ due to Lemma~\ref{lmvp} since the choice of $p_0$ guarantees that
\begin{equation}
p_{j+1} > p_j > p_0 > \frac{kN}{2}\,, \quad j\ge 0\,,  \;\;\text{ and }\;\; \lim_{j\to \infty} p_j = \infty. \label{ami4}
\end{equation} 
For $j\ge 0$, we take $p=p_{j+1}$ in \eqref{ami2} and use \eqref{ami3} to obtain
\begin{equation*}
\frac{d}{dt} \|\tilde{S}\|_{p_{j+1}}^{p_{j+1}} + C_{\ref{cst16}} p_{j+1} \|\tilde{S}\|_{p_{j+1}}^{p_{j+1}} \le C p_{j+1}^2\|\tilde{S}\|_{p_j}^{\frac{(N+2) p_j}{N}} \le C p_{j+1}^2 X_j^{\frac{N+2}{N}}.
\end{equation*}
After an integration with respect to time, we find
\begin{align*}
\|\tilde{S}(t)\|_{p_{j+1}}^{p_{j+1}} & \le \|\tilde{S}^{in}\|_{p_{j+1}}^{p_{j+1}}  e^{-C_{\ref{cst16}} p_{j+1} t} + C p_{j+1} X_j^{\frac{N+2}{N}} \left( 1 - e^{-C_{\ref{cst16}} p_{j+1} t} \right) \\
& \le \max\left\{ |\Omega| \|\tilde{S}^{in}\|_\infty^{p_{j+1}} , C p_{j+1} X_j^{\frac{N+2}{N}} \right\}
\end{align*}
for all  $t\ge 0$, where $\tilde{S}^{in} = B S^{in}$. Hence,
\begin{equation*}
X_{j+1} \le C p_{j+1} \max\left\{ \|\tilde{S}^{in}\|_\infty^{p_{j+1}} , X_j^{\frac{N+2}{N}} \right\}, \qquad j\ge 0\,.
\end{equation*}
Owing to \eqref{ami3} and recalling that $X_{0}$ is finite by Lemma~\ref{lmvp},
we are in a position to apply Lemma~\ref{lmiter} with $\theta=\frac{N+2}{N}$, $b=1$, and $c=-k$ to conclude that the sequence $\left( X_j^{1/p_j} \right)_{j\ge 0}$ is bounded. Since
\begin{equation*}
	\|v(t)\|_\infty \le \|\tilde{S}(t)\|_\infty = \lim_{j\to\infty} \|\tilde{S}(t)\|_{p_j} \le \sup_{j\ge 0}\left\{ X_j^{1/j}\right\}\,, \qquad t\ge 0\,, 
\end{equation*}
by \eqref{y20}, we have shown the uniform-in-time boundedness of $v$.
\end{proof}

\subsection{Improved regularity}\label{sec.ir}

With the time-independent upper bound on $v$ derived in Proposition~\ref{propunib}, we may argue in the same manner as in Section~\ref{gecs} to derive the uniform-in-time boundedness of $(u,v,n)$. More precisely, we set $J=[0,\infty)$ and  regard $w$ as a solution to the following initial boundary value problem
\begin{subequations}\label{cpvu}
	\begin{align}
	\partial_t w + u \gamma(v) & = \beta\varphi+\psi\,, \qquad (t,x)\in (0,\infty)\times\Omega\,, \label{cpvu1} \\
	- \Delta w + \beta w & = u \,, \qquad (t,x)\in (0,\infty)\times\Omega\,, \label{cpvu1.5} \\
	\nabla w \cdot \nu & = 0\,, \qquad (t,x)\in (0,\infty)\times\partial\Omega\,, \label{cpvu2} \\
	w(0)  & = w^{in}\,, \qquad x\in\Omega\,, \label{cpvu3}
	\end{align}
\end{subequations}
the source term $\varphi$ and $\psi$ being still defined by \eqref{defvarphi} and \eqref{defpsi}, respectively.

In view of  \eqref{e00} and Proposition~\ref{propunib},  we have
\begin{equation}
0 < v_* \le v(t,x) \le v^*\;\;\text{ in }\;\; J\times \bar{\Omega}\,, \label{bvu}
\end{equation}
while \eqref{lowbsw}, \eqref{bvu}, and Propositions~\ref{Lemrev} and~\ref{Lemrevbis}  imply that
\begin{equation}\label{bwu}
	w_*\leq w(t,x)\leq w^*= \frac{v^*}{A}\;\;\text{ in }\;\; J\times \bar{\Omega}\,.
\end{equation}
Also, by \eqref{gvup}, \eqref{fup}, \eqref{bwu}, and the elliptic comparison principle,
\begin{equation}
0\leq \beta\varphi+\psi= \mA^{-1}[\beta u\gamma(v)+uf(n)]\leq \big( \beta \gamma^* + f^* \big) w \leq \big( \beta \gamma^* + f^* \big) w^* \;\;\text{ in }\;\; J\times\bar{\Omega}\,.\label{phibu}
\end{equation}
 Thanks to these properties, we may then argue as in the proof of Proposition~\ref{prop.impreg2} to derive the H\"older continuity of $v$.

\begin{proposition}\label{prop.impreg2u}
There is $\alpha\in (0,1)$ such that $v\in BUC^{\alpha}(J,C^\alpha(\bar{\Omega}))$.
\end{proposition}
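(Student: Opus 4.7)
The plan is to follow the two-step strategy used in Section~\ref{sec.ir1} for the proof of Proposition~\ref{prop.impreg2}, with the essential difference that all constants must now be chosen uniformly in time. The key enabler is that the bounds \eqref{bvu}, \eqref{bwu}, and \eqref{phibu} on $v$, $w$, and $\beta\varphi + \psi$ are time-independent, in contrast to their counterparts \eqref{bv}, \eqref{bw}, and \eqref{phib} from Section~\ref{sec.ir1}.

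First, I would revisit Lemma~\ref{lem.impreg1} and establish its analogue with a constant $C_{\ref{cst5}}$ independent of time. Inspection of that proof shows that the dependence of $C_{\ref{cst5}}(T)$ on $T$ enters only through $v^*(T)$, $w^*(T)$, and the upper bound on $\beta\varphi + \psi$ inherited from \eqref{phib}, each of which can now be replaced by its time-uniform counterpart from Section~\ref{sec3}. Next, arguing exactly as in Corollary~\ref{regularw} on the cylinder $[0, 2] \times \bar{\Omega}$ yields a H\"older estimate $\|w\|_{C^{\alpha}([0,2]\times\bar{\Omega})} \le K_0$ for some $\alpha \in (0, 1)$ and $K_0>0$ depending only on $\Omega$, $\gamma$, $f$, $\tau$, $\beta$, and the initial data. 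The same argument, applied on each shifted cylinder $[n, n+2] \times \bar{\Omega}$ for $n \geq 1$ using $w(n, \cdot)$ as the new ``initial datum'' (whose $C^{\alpha}$-norm is controlled by the estimate on the preceding cylinder $[n-1, n+1]$), gives $\|w\|_{C^{\alpha}([n, n+2]\times \bar{\Omega})} \le K_0$ for every $n \geq 0$, thanks to the time-translation invariance of the energy inequality. Piecing these local H\"older estimates together provides $w \in BUC^{\alpha}(J \times \bar{\Omega})$.

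Finally, setting $r \triangleq \mA^{-1}[v]$, I would observe as in the proof of Proposition~\ref{prop.impreg2} that $r$ solves the linear heat equation $\tau \partial_t r - \Delta r + \beta r = w$ on $J \times \Omega$ with homogeneous Neumann boundary conditions and initial datum $r^{in} = \mA^{-1}[v^{in}]$. Since $w \in BUC^{\alpha}(J \times \bar{\Omega})$ and $r^{in}$ enjoys the regularity inherited from \eqref{ini}, parabolic Schauder theory together with the variation-of-constants formula based on the analytic semigroup generated by $\mA$ provides $r \in BUC^{\alpha}(J, C^{2+\alpha}(\bar{\Omega}))$ with a time-uniform norm, the uniformity as $t \to \infty$ being guaranteed by the exponential decay of the semigroup $e^{-t\mA}$ induced by the dissipative zeroth-order term $\beta r$. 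We then conclude that $v = \beta r - \Delta r \in BUC^{\alpha}(J, C^{\alpha}(\bar{\Omega}))$, as claimed. The main obstacle is the bookkeeping required to verify that all constants are truly independent of time; this is ultimately a consequence of the time-translation invariance of the energy inequality of Lemma~\ref{lem.impreg1}, once the uniform-in-time bounds from Section~\ref{sec3} are at hand.
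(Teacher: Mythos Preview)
Your proposal is correct and follows essentially the same approach as the paper, which simply states that one may argue as in the proof of Proposition~\ref{prop.impreg2} using the time-independent bounds \eqref{bvu}, \eqref{bwu}, and \eqref{phibu} in place of \eqref{bv}, \eqref{bw}, and \eqref{phib}. You have filled in the details the paper omits; the only minor point is that the shifted-cylinder argument is cleaner if stated via interior H\"older estimates (independent of the initial trace) on $[n,n+2]$ for $n\ge 1$, rather than by bootstrapping the $C^\alpha$-norm of $w(n,\cdot)$ from the preceding cylinder, which avoids any risk of the constants creeping up.
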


 As in Section~\ref{sec.ir2}, we now exploit the H\"older regularity on $v$ provided by Proposition~\ref{prop.impreg2u} and, as in the proofs of Lemma~\ref{lem.impreg3}, we proceed along the lines of \cite[Section~6]{Aman1989} to establish the boundedness of the trajectory $\{w(t)\ :\ t\ge 0\}$ in $W^{2\theta,p}(\Omega)$ for all $p\in (N,\infty)$ and $\theta\in((N+p)/2p,1)$. We here take advantage of the validity of the needed bounds with constants which do not depend on time to derive estimates which also do not depend on time.

\begin{lemma} \label{lemma.impreg4u} \refstepcounter{NumConst}\label{cst18}
	For any  $p\in (N,\infty)$ and $\theta\in((N+p)/2p,1)$, there is $C_{\ref{cst18}}(p)>0$ such that
	\begin{equation*}
		\|w(t)\|_{W^{2\theta,p}} \le C_{\ref{cst18}}(p)\,, \qquad t\ge 0\,. 
	\end{equation*}
\end{lemma}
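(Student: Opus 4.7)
The plan is to repeat the proof of Lemma~\ref{lem.impreg3} almost verbatim, replacing all time-dependent constants by their time-independent counterparts. The essential new inputs are Proposition~\ref{propunib}, \eqref{bvu}, \eqref{bwu}, and \eqref{phibu}, which provide uniform-in-time bounds on $v$, $w$, and the source term $F \triangleq \beta\varphi + \psi - \beta\gamma(v)w$, together with Proposition~\ref{prop.impreg2u}, which upgrades the local H\"older regularity of $v$ used in Lemma~\ref{lem.impreg3} to global H\"older regularity on $J = [0,\infty)$, namely $v \in BUC^\alpha(J, C^\alpha(\bar{\Omega}))$.

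First, I would introduce the operator family $\mathsf{A}(s) z \triangleq -\gamma(s)\Delta z$ together with the Neumann boundary operator $\mathsf{B} z \triangleq \nabla z \cdot \nu$, exactly as in Lemma~\ref{lem.impreg3}. Thanks to Proposition~\ref{prop.impreg2u} and the smoothness of $\gamma$, the $L^p$-realization $\tilde{\mathsf{A}}(t)$ of $(\mathsf{A}(v(t)), \mathsf{B})$ with domain $W^{2,p}_{\mathcal{B}}(\Omega)$ defines a family $\tilde{\mathsf{A}} \in BUC^\alpha(J, \mathcal{L}(W^{2,p}(\Omega), L^p(\Omega)))$, while the two-sided bound \eqref{bvu} guarantees that the ellipticity constants of $\tilde{\mathsf{A}}(t)$ are uniform in $t \in J$. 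Consequently, $\tilde{\mathsf{A}}(J)$ is a regularly bounded subset of $C^\alpha(J, \mathcal{H}(W^{2,p}(\Omega), L^p(\Omega)))$ with constants independent of the length of any subinterval of $J$, and \cite[Theorem~A.1]{Aman1990} (or \cite[Theorem~II.4.4.1]{Aman1995}) produces a parabolic fundamental solution $\tilde{U}$ on $\{(t,s) : 0 \le s \le t < \infty\}$, together with time-independent constants $M_\theta > 0$ and $\omega \in \mathbb{R}$ satisfying, for all $0 \le s < t < \infty$,
\begin{equation*}
\|\tilde{U}(t, s)\|_{\mathcal{L}(L^p(\Omega), W^{2\theta, p}(\Omega))} \le M_\theta (t-s)^{-\theta} e^{\omega(t-s)}.
\end{equation*}

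Second, to convert these estimates into a bound on $\|w(t)\|_{W^{2\theta,p}}$ which is uniform in $t$ despite the possibly positive $\omega$, I would employ a restart argument on intervals of unit length. For $t \in [0, 1]$, Lemma~\ref{lem.impreg3} with $T = 1$ already provides the desired bound $C_{\ref{cst6}}(p,\theta,1)$ directly. For $t \ge 1$, the representation formula restarted at time $t-1$ reads
\begin{equation*}
w(t) = \tilde{U}(t, t-1)\, w(t-1) + \int_{t-1}^t \tilde{U}(t, s)\, F(s)\, \mathrm{d}s,
\end{equation*}
and the two terms on the right-hand side are now estimated independently of $t$: the first by $M_\theta e^\omega \|w(t-1)\|_p \le M_\theta e^\omega w^\ast |\Omega|^{1/p}$ using \eqref{bwu}, and the second by $M_\theta |\Omega|^{1/p} \|F\|_{L^\infty(J\times\Omega)} \int_0^1 \sigma^{-\theta} e^{\omega \sigma}\, \mathrm{d}\sigma$ using \eqref{gvup}, \eqref{bwu}, and \eqref{phibu} to control $\|F\|_{L^\infty(J\times\Omega)}$.

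The main obstacle, as in Lemma~\ref{lem.impreg3}, is the existence of the parabolic fundamental solution $\tilde{U}$ with time-independent constants in its mapping estimates. Whereas the finite-time statement only required local H\"older continuity of $v$ on $J=[0,T]$, the uniform-in-time statement hinges on Proposition~\ref{prop.impreg2u}, which ensures that the $BUC^\alpha$-norm of $\tilde{\mathsf{A}}$ on the entire half-line $J$ is finite. This is precisely what is needed for Amann's framework to yield a constant $M_\theta$ independent of $t$; once this is granted, the restart argument above is straightforward and the remainder is a routine integration.
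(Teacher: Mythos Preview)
Your proof is correct but takes a different route from the paper. The paper avoids the restart argument by shifting the generator: it picks $\mu>\omega$ and rewrites \eqref{cpvu} as $\partial_t w + (\mu+\tilde{\mathsf{A}}(\cdot))w = F+\mu w$, so that the Duhamel representation becomes
\[
w(t)=e^{-\mu t}\tilde U(t,0)w^{in}+\int_0^t e^{-\mu(t-s)}\tilde U(t,s)(F+\mu w)(s)\,\mathrm{d}s,
\]
and the integrand now carries the convergent factor $e^{(\omega-\mu)(t-s)}(t-s)^{-\theta}$ on the whole half-line. Both approaches rest on the same ingredients (Proposition~\ref{prop.impreg2u} for time-uniform constants in Amann's estimates, and \eqref{bwu}, \eqref{phibu} for the $L^\infty$-bound on the source). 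Your restart-on-unit-intervals argument is slightly more elementary and avoids modifying the equation, while the paper's shift trick gives the bound in a single step without the case distinction $t\le 1$ versus $t>1$; in the paper's version the role of the uniform bound on $w$ appears only through the augmented source $F+\mu w$.
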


\begin{proof}  Recall that $J=[0,\infty)$. With the same notations as in Lemma~\ref{lem.impreg3}, we now have a unique parabolic fundamental solution $\tilde{U}$ associated to $\{ \mathsf{A}(t)\ :\ t\in J \}$ and there exist time-independent positive constants $M>0$, $M_\theta>0$, and $\omega>0$ such that 
	\begin{equation}
	\|\tilde{U}(t,s)\|_{\mathcal{L}(W^{2,p}(\Omega))} + 	\|\tilde{U}(t,s)\|_{\mathcal{L}(L^p(\Omega))} + (t-s) 	\|\tilde{U}(t,s)\|_{\mathcal{L}(L^p(\Omega),W^{2,p}(\Omega))} \le M e^{\omega(t-s)} \label{irb2u}
	\end{equation}
and
	\begin{equation}
	\|\tilde{U}(t,s)\|_{\mathcal{L}(W_{\mathcal{B}}^{2\theta,p}(\Omega))} + (t-s)^\theta \|\tilde{U}(t,s)\|_{\mathcal{L}(L^p(\Omega),W_{\mathcal{B}}^{2\theta,p}(\Omega))} \le M_\theta e^{\omega(t-s)} \label{irb3u}
	\end{equation}
	for $0\le s<t$. 
	
We then pick $\mu>\omega$ and deduce from \eqref{cpvu} that $w$ solves 
\begin{equation}
	\begin{split}
	\partial_t w + \left( \mu + \mathsf{A}(\cdot) \right)w  & = F+\mu w\,, \qquad t>0\,, \\
	w(0) & = w^{in} \,, 
	\end{split}\label{irb4u}
\end{equation} 
with $F=\beta \varphi+\psi - \beta w \gamma(v)$.
\refstepcounter{NumConst}\label{cst19} By \eqref{phibu} and the boundedness $v$ and $w$, there is $C_{\ref{cst19}}>0$ such that
\begin{equation}
	\|F(t)+\mu w(t)\|_\infty\leq C_{\ref{cst19}}\,, \qquad t\in J\,. \label{irb1000}
\end{equation}
 Owing the continuity of $u$, $v$, and $w$ provided by Theorem~\ref{TH1}, 
	\begin{equation*}
	F+\mu w \in C(J \times\bar{\Omega})\,, 
	\end{equation*}
	so that $w$ is the unique classical solution to \eqref{irb4u} on $J$. Using again \cite[Remarks~II.2.1.2~(a)]{Aman1995}, we conclude that $w$ is given by the representation formula 
	\begin{equation}
	w(t) = e^{-\mu t} \tilde{U}(t,0) w^{in} + \int_0^t e^{-\mu(t-s)} \tilde{U}(t,s) (F+\mu w)(s)\ \mathrm{d}s\,, \qquad t\ge 0\,. \label{irb9u}
	\end{equation}
	We then infer from \eqref{irb3u}, \eqref{irb1000}, and \eqref{irb9u} that, for $t\ge 0$,
	\begin{align}
	\|w(t)\|_{W^{2\theta,p}} & \le M_\theta e^{(\omega-\mu)t} \|w^{in}\|_{W^{2\theta,p}} +M_\theta \int_0^t (t-s)^{-\theta} e^{(\omega-\mu)(t-s)} \|(F+\mu w)(s)\|_p\ \mathrm{d}s  \nonumber \\
	& \le C(p,\theta) +  C_{\ref{cst19}} M_\theta |\Omega|^{1/p} \int_0^t (t-s)^{-\theta} e^{(\omega-\mu)(t-s)} \mathrm{d}s \nonumber \\
	&\leq C(p,\theta)\,,  \label{irb10u}
	\end{align}
since
	\begin{equation*}
	\mathcal{I}_\theta \triangleq \int_0^\infty s^{-\theta} e^{(\omega-\mu)s}\ \mathrm{d}s < \infty\,.
	\end{equation*}
\end{proof}

Last, we shall derive uniform-in-time $L^q$-estimates for $u$ and $q\in (1,\infty)$.

\begin{proposition}\label{prop.uLq} \refstepcounter{NumConst}\label{cst20} 
For any $q\in(1,\infty)$, there is $C_{\ref{cst20}}(q)>0$ such that 
\begin{equation*}
	\sup\limits_{t\geq0} \|u(t)\|_{q}\leq C_{\ref{cst20}}(q)\,.
\end{equation*}	
\end{proposition}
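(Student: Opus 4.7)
The plan is to follow the strategy of Proposition~\ref{propLpu} but to upgrade each step to a uniform-in-time estimate, by exploiting the exponential decay of the analytic semigroup generated by $-\mA/\tau$. First, I fix $p\in(N,\infty)$ together with exponents $(N+p)/(2p)<\theta'<\theta<1$. Lemma~\ref{lemma.impreg4u} then provides a uniform-in-time bound on $w$ in $W^{2\theta,p}(\Omega)$, and hence on $u=\mA[w]$ in $W^{2\theta-2,p}(\Omega)$.

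Second, I regard $v$ as the mild solution to the abstract Cauchy problem $\tau\partial_t v+\mA v=u$ and exploit the representation formula
\begin{equation*}
v(t)=e^{-t\mA/\tau}v^{in}+\frac{1}{\tau}\int_0^t e^{-(t-s)\mA/\tau}u(s)\,\rd s
\end{equation*}
together with the standard bound
\begin{equation*}
\|e^{-t\mA/\tau}\|_{\mathcal{L}(W^{2\theta-2,p},W^{2\theta',p})}\le C t^{-(1+\theta'-\theta)}e^{-\beta t/(2\tau)}\,,
\end{equation*}
whose singularity at $s=t$ is integrable since $1+\theta'-\theta<1$. Combined with the uniform-in-time bound on $\|u(s)\|_{W^{2\theta-2,p}}$ from the first step, this produces $\sup_{t\ge 0}\|v(t)\|_{W^{2\theta',p}}<\infty$, and then $\sup_{t\ge 0}\|\nabla v(t)\|_\infty<\infty$ via the Sobolev embedding $W^{2\theta',p}(\Omega)\hookrightarrow C^1(\bar{\Omega})$, which is valid thanks to the choice $2\theta'>1+N/p$.

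Third, I turn to a standard energy argument: multiplying \eqref{cp1} by $u^{q-1}$ and integrating by parts, then using Young's inequality together with the uniform bounds on $v$, $|\gamma'(v)|^2/\gamma(v)$, $\nabla v$, and $f(n)$ and the lower bound $\gamma(v)\ge\gamma(v^*)>0$, I expect to arrive at a differential inequality of the form
\begin{equation*}
\frac{d}{dt}\|u\|_q^q+c_1\|\nabla u^{q/2}\|_2^2\le C_1\|u\|_q^q\,.
\end{equation*}
A Gagliardo--Nirenberg inequality applied to $u^{q/2}$, together with the $L^1$-mass control \eqref{uL1}, provides
\begin{equation*}
\|u\|_q^q\le C_2\|\nabla u^{q/2}\|_2^{2\vartheta}+C_2
\end{equation*}
for some $\vartheta\in(0,1)$ depending only on $N$ and $q$. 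Using it first through Young's inequality to absorb $C_1\|u\|_q^q$ by a fraction of $c_1\|\nabla u^{q/2}\|_2^2$, and then using it again to bound the remaining dissipation from below by $c_3\|u\|_q^{q/\vartheta}$ when $\|u\|_q^q$ is large, I reach a differential inequality of the form $y'+c_3 y^{1/\vartheta}\le C_3$ for $y(t)=\|u(t)\|_q^q$ in the regime of large $y$. Since $1/\vartheta>1$, a classical ODE comparison argument then yields the uniform-in-time bound $\sup_{t\ge 0}\|u(t)\|_q\le C_{\ref{cst20}}(q)$.

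The main obstacle is the uniform-in-time control of the semigroup convolution in the second step, which was not needed in the local-in-time analogue Proposition~\ref{propLpu} and crucially relies on the time-independent $W^{2\theta,p}$-bound on $w$ from Lemma~\ref{lemma.impreg4u}; once this is secured, the rest of the proof is a routine adaptation of the arguments of Section~\ref{sec.ir2}.
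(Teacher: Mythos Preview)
Your proposal is correct and follows essentially the same approach as the paper: a uniform-in-time $W^{2\theta,p}$ bound on $w$ via Lemma~\ref{lemma.impreg4u}, then a semigroup/Duhamel estimate to obtain a time-independent bound on $\|\nabla v\|_\infty$, and finally the $L^q$ energy inequality for $u$ closed with an interpolation against the $L^1$ mass bound. The only minor differences are that the paper initiates the semigroup estimate from a positive time $t_*$ (using $v(t_*)\in C^2(\bar\Omega)$, which avoids any issue with the regularity of $v^{in}$ at $t=0$) and closes the final step via a linear differential inequality $y'+cy\le C$ rather than your superlinear $y'+c y^{1/\vartheta}\le C$; both variants give the desired uniform bound.
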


\begin{proof} 
We fix $t_*>0$ and first argue as in the beginning of the proof of Proposition~\ref{propLpu} to show that 
\begin{equation}
	\|\nabla v(t)\|_\infty \le C\,, \qquad t\in [t_*,\infty)\,, \label{irb14u}
\end{equation}
taking advantage here that the bounds we use do not depend on time and that the infinite integral with respect to time converges as it features the negative exponential $t\mapsto e^{-\beta t /2}$.

It remains to establish the uniform-in-time $L^q$-boundedness of $u$ for any $q>1$. Recalling the boundedness \eqref{fup} of $f(n)$ and the uniform-in-time boundedness of $\frac{|\gamma'(v)|^2}{\gamma(v)}$ on $[0,\infty)\times \bar{\Omega}$ which is a consequence of \eqref{g1} and \eqref{bvu}, we can argue in the same manner as in the proof of Proposition~\ref{propLpu} to obtain that, for $t\geq t_*$,\refstepcounter{NumConst}\label{cst21} 
\begin{align}\label{uniuest0}
	\frac{d}{dt}\|u\|_q^q+\frac{q(q-1)}{2}\int_\Omega \gamma(v)u^{q-2}|\nabla u|^2\rd x+ \|u\|_q^q \leq& \left( C(q) \sup\limits_{[t_*,\infty)}\|\nabla v\|_{\infty}+f^*+1 \right) \| u\|_q^q\nonumber\\
	\leq & C_{\ref{cst21}}(q) \| u\|_q^q\,.
\end{align}
Since $\gamma(v)\geq \gamma (v^*)$ due to \eqref{bvu} and the monotonicity \eqref{g1} of $\gamma$, we infer from Sobolev's embedding that \refstepcounter{NumConst}\label{cst22} 
\begin{align}
		\frac{q(q-1)}{2}\int_\Omega \gamma(v)u^{q-2}|\nabla u|^2\,\rd x + \|u\|_q^q & \geq\frac{2(q-1)\gamma(v^*)}{q} \left\|\nabla u^{q/2}\right\|_2^2 + \left\| u^{q/2} \right\|_2^2 \nonumber\\
		& \geq \min\left\{ \frac{2(q-1)\gamma(v^*)}{q} , 1 \right\} \left\| u^{q/2} \right\|_{H^1}^2 \non \\
		& \geq C_{\ref{cst22}}(q) \|u\|_{\frac{qN}{N-2}}^q\,. \label{y30}
\end{align}
It next follows from \eqref{uL1}, H\"older's and Young's inequalities that, for $\varepsilon>0$, 
\begin{equation*}
	\|u\|_q^q  \le \|u\|_{\frac{qN}{N-2}}^{\frac{Nq(q-1)}{N(q-1)+2}} \|u\|_1^{\frac{2q}{N(q-1)+2}} \le \|u\|_{\frac{qN}{N-2}}^{\frac{Nq(q-1)}{N(q-1)+2}} \|u^{in}+n^{in}\|_1^{\frac{2q}{N(q-1)+2}} \le \varepsilon \|u\|_{\frac{qN}{N-2}}^{q} + C(q,\varepsilon)\,.
\end{equation*}
Consequently, 
\begin{equation*}
	\|u\|_{\frac{qN}{N-2}}^{q} \ge \frac{\|u\|_q^q}{\varepsilon} - C(q,\varepsilon)\,,
\end{equation*}
which gives, together with \eqref{y30}, \refstepcounter{NumConst}\label{cst23}
\begin{equation} 
	\frac{q(q-1)}{2}\int_\Omega \gamma(v)u^{q-2}|\nabla u|^2\,\rd x + \|u\|_q^q \geq \frac{C_{\ref{cst22}}(q)}{\varepsilon} \|u\|_q^q - C_{\ref{cst23}}(q,\varepsilon)\,. \label{y31}
\end{equation}
Combining \eqref{uniuest0} and \eqref{y31} with $\varepsilon=C_{\ref{cst22}}(q)/2C_{\ref{cst21}}(q)$, we end up with
\begin{equation}\label{uestuuub}
	\frac{d}{dt}\| u\|_q^q + C_{\ref{cst21}}(q) \| u\|_q^q \leq C(q)\,,\qquad t\geq t_*\,,
\end{equation}
from which the uniform-in-time bounded of the $L^q$-norm of $u$ follows.
\end{proof}

 Theorem~\ref{TH2} is now a straightforward consequence of Proposition~\ref{prop.uLq} and a bootstrap argument.

\begin{proof}[Proof of Theorem~\ref{TH2}] 
With the aid of Proposition~\ref{prop.uLq}, we may further use a standard bootstrap argument (cf. \cite[Lemma~4.3]{AhnYoon2019}) to prove that
\begin{equation*}
\sup\limits_{t\ge 0} \|u(t)\|_{\infty}\leq C\,,
\end{equation*}
 and thus complete the proof of \eqref{ubd}. We then use the smoothing and decaying properties of the semigroup associated with the operator $\mathcal{A}$ to deduce from \eqref{cp2}, \eqref{cpn}, and \eqref{ubd} that both $v$ and $n$ belong to $L^\infty((t_0,\infty);C^1(\bar{\Omega}))$, while this property for $u$ follows from \eqref{cp1} and \cite[Theorem~5.2]{Aman1989}.
\end{proof}

\section{Stabilization toward the spatially homogeneous solution}

In this section, we study the stabilization of bounded classical solutions to \eqref{cp} toward the spatial homogeneous solution $(m,m,0)$ under the assumptions of Theorem~\ref{TH3}. More precisely, let $(u^{in},v^{in},n^{in})$ be initial conditions satisfying \eqref{ini} and assume that the corresponding non-negative classical solution $(u,v,n)$ to \eqref{cp} provided by Theorem~\ref{local} is global ($T_{\mathrm{max}}=\infty$) and bounded. Recalling \eqref{e00}, there are positive constants $u^*\ge 1$, $v_*>0$, and $v^*\ge 1$ such that 
\begin{equation}
	0 \le u(t,x) \le u^*\,, \qquad 0< v_* \le v(t,x) \le v^*\,, \qquad (t,x)\in [0,\infty)\times \Omega\,. \label{L03}
\end{equation} 
We next set 
\begin{equation*} 
	\gamma_1(s) \triangleq s \gamma(s) \;\;\text{ and }\;\; \Gamma_1(s) \triangleq \int_1^{s}\gamma_1(\eta)\rd \eta = \int_1^{s} \eta \gamma(\eta)\rd \eta\,, \qquad s>0\,.
\end{equation*}
Due to assumptions~\eqref{g1} and~\eqref{g_1'}, we have  
\begin{equation}
	\gamma' \le 0 \le \gamma_1'\,. \label{L00}
\end{equation}
We also denote the mean value of $z\in L^1(\Omega)$ by $\langle z\rangle$; that is,
	\begin{equation*}
		\langle z \rangle \triangleq \frac{1}{|\Omega|} \int_\Omega z(x)\ \mathrm{d}x\,.
	\end{equation*}
Introducing $a_* \triangleq \min\{1,v_*,\langle u^{in} \rangle\}>0$ and $a^* \triangleq \max\{u^*,v^*\}$, we infer from \eqref{uL1} and \eqref{L03} that
\begin{equation}
	\big(\langle u(t)\rangle , v(t) \big) \in [a_*,a^*]^2\,, \qquad t\in [0,\infty)\,. \label{y101}
\end{equation}

For simplicity, we only consider the case $\tau=\beta=1$ hereafter, the computations performed below being the same in the general case, while the derived identities feature additional factors depending on $\tau$ and $\beta$. Then $\kappa$ and $(\kappa_i)_{i\ge 1}$ denote positive constants depending only on $\Omega$, $\gamma$, $f$, the initial data, and the parameters $(u^*,v_*,v^*)$ introduced in \eqref{L03}.

\subsection{A Lyapunov functional}

 Building upon a construction designed in \cite{DLTW2021} for the system~\eqref{prlmod0}, we introduce in this section a Lyapunov functional, which plays a key role in the study of the long time behavior of classical solutions to \eqref{cp}. 

For $t\geq 0$, we define the unique solution $U(t)$ to the elliptic equation
\begin{equation}
	-\Delta U(t) = u(t) - \langle u(t) \rangle \;\;\text{ in }\;\; \Omega\,, \quad \nabla U(t)\cdot\nu = 0 \;\;\text{ on }\;\; \partial\Omega\,, \quad \langle U(t) \rangle = 0\,. \label{L01}
\end{equation}
Then, by \eqref{cp1} and \eqref{L01},
\begin{align}
 \frac{1}{2} \frac{d}{dt} \|\nabla U\|_2^2 & = - \int_\Omega U \partial_t \Delta U\ \mathrm{d}x = \int_\Omega U \partial_t u\ \mathrm{d}x \nonumber  \\
 & = \int_\Omega U \Delta (u\gamma(v))\ \mathrm{d}x + \int_\Omega U u f(n)\ \mathrm{d}x \nonumber \\
& = \int_\Omega u\gamma(v) \Delta U\ \mathrm{d}x + \int_\Omega U u f(n)\ \mathrm{d}x \nonumber\\
& = \int_\Omega (\langle u \rangle - u) u \gamma(v)\ \mathrm{d}x + \int_\Omega U u f(n)\ \mathrm{d}x\,. \label{y100}
\end{align}
To handle the first term on the right-hand side of \eqref{y100}, we proceed as in \cite{DLTW2021} to obtain
\begin{align*}
	J_0 & \triangleq \int_\Omega (\langle u \rangle - u) u \gamma(v)\ \mathrm{d}x \\
	& = \int_\Omega  (\langle u \rangle - 2v) u \gamma(v)\ \mathrm{d}x + \int_\Omega v \gamma_1(v)\ \mathrm{d}x - \int_\Omega (v-u)^2 \gamma(v)\ \mathrm{d}x\,.
\end{align*}
Since
\begin{align*}
	& \int_\Omega  (\langle u \rangle - 2v) u \gamma(v)\ \mathrm{d}x  = \int_\Omega  (\langle u \rangle - 2v) \gamma(v) (\partial_t v - \Delta v +v)\ \mathrm{d}x \\
	& \hspace{1cm} = \langle u \rangle \frac{d}{dt} \int_\Omega \Gamma(v)\ \mathrm{d}x - 2 \frac{d}{dt} \int_\Omega \Gamma_1(v)\ \mathrm{d}x   - \int_\Omega \left[ 2 \gamma_1'(v) - \langle u \rangle \gamma'(v) \right] |\nabla v|^2\ \mathrm{d}x\\
	& \hspace{1cm}\qquad + \langle u \rangle \int_\Omega \gamma_1(v)\ \mathrm{d}x - 2 \int_\Omega v \gamma_1(v)\ \mathrm{d}x\,,
\end{align*}
we obtain
\begin{align*}
	J_0 & = \langle u \rangle \frac{d}{dt} \int_\Omega \Gamma(v)\ \mathrm{d}x - 2 \frac{d}{dt} \int_\Omega \Gamma_1(v)\ \mathrm{d}x  - \int_\Omega \left[ 2 \gamma_1'(v) - \langle u \rangle \gamma'(v) \right] |\nabla v|^2\ \mathrm{d}x\\
	& \qquad - \int_\Omega (v-\langle u\rangle) \gamma_1(v)\ \mathrm{d}x - \int_\Omega (v-u)^2 \gamma(v)\ \mathrm{d}x\,.
\end{align*}
Consequently, inserting the above formula for $J_0$ in \eqref{y100} and using \eqref{cp1}, we obtain
\begin{align*}
	& \frac{d}{dt} \left[ \frac{\|\nabla U\|_2^2}{2} + 2 \int_\Omega \Gamma_1(v)\ \mathrm{d}x - \langle u \rangle \int_\Omega \Gamma(v)\ \mathrm{d}x \right] \\
	& \qquad = - \left( \int_\Omega \Gamma(v)\ \mathrm{d}x \right)  \frac{d}{dt} \langle u \rangle - \int_\Omega \left[ 2 \gamma_1'(v) - \langle u \rangle \gamma'(v) \right] |\nabla v|^2\ \mathrm{d}x\\
	& \qquad\qquad - \int_\Omega (v-\langle u\rangle) \gamma_1(v)\ \mathrm{d}x - \int_\Omega (v-u)^2 \gamma(v)\ \mathrm{d}x + \int_\Omega U u f(n)\ \mathrm{d}x \\
	& \qquad = - \int_\Omega \left[ 2 \gamma_1'(v) - \langle u \rangle \gamma'(v) \right] |\nabla v|^2\ \mathrm{d}x - \int_\Omega (v-\langle u\rangle) (\gamma_1(v)-\gamma_1(\langle u\rangle))\ \mathrm{d}x \\
	& \qquad\qquad - \int_\Omega (v-u)^2 \gamma(v)\ \mathrm{d}x + \int_\Omega U u f(n)\ \mathrm{d}x - \gamma_1(\langle u\rangle) \int_\Omega (v-\langle u\rangle)\ \mathrm{d}x \\
	& \qquad\qquad - \langle \Gamma(v) \rangle \| u f(n)\|_1\,.
\end{align*}
Finally, in view of \eqref{cp1} and \eqref{cp2}, we observe that
\begin{equation*}
	- \frac{d}{dt} \left( \gamma_1(\langle u\rangle) \|v\|_1 \right) = \gamma_1(\langle u\rangle)  \int_\Omega (v - \langle u\rangle)\ \mathrm{d}x -\gamma_1'(\langle u\rangle) \langle v\rangle \| u f(n)\|_1\,,
\end{equation*}
\begin{equation*}
	|\Omega| \frac{d}{dt} \bigg( \gamma_1(\langle u\rangle) \langle u\rangle - \Gamma_1(\langle u\rangle) \bigg) = \gamma_1'(\langle u\rangle)\langle u\rangle \frac{d}{dt}\|u\|_1 =\gamma_1'(\langle u\rangle)\langle u\rangle\|uf(n)\|_1,
\end{equation*}
and
\begin{equation*}
	|\Omega| \frac{d}{dt} \bigg( \langle u\rangle \Gamma(\langle u\rangle) - \Gamma_1(\langle u\rangle)\bigg)  = \Gamma(\langle u\rangle) \frac{d}{dt} \|u\|_1 = \Gamma(\langle u\rangle)\|uf(n)\|_1.
\end{equation*}

Gathering the above identities gives
\begin{align*}
	& \frac{d}{dt} \bigg[ \frac{\|\nabla U\|_2^2}{2} + 2 \int_\Omega \Gamma_1(v)\ \mathrm{d}x - \langle u \rangle \int_\Omega \Gamma(v)\ \mathrm{d}x - \gamma_1(\langle u\rangle) \| v\|_1\\
	&\qquad \qquad + |\Omega| \Big(\gamma_1(\langle u\rangle)\langle u\rangle+\langle u\rangle\Gamma(\langle u\rangle) - 2\Gamma_1(\langle u\rangle) \Big) \bigg] \\
	& \qquad  = - \int_\Omega \left[ 2 \gamma_1'(v) - \langle u \rangle \gamma'(v) \right] |\nabla v|^2\ \mathrm{d}x - \int_\Omega (v-\langle u\rangle) (\gamma_1(v)-\gamma_1(\langle u\rangle))\ \mathrm{d}x \\
	& \qquad\qquad - \int_\Omega (v-u)^2 \gamma(v)\ \mathrm{d}x + \int_\Omega U u f(n)\ \mathrm{d}x - \left[ \langle \Gamma(v) \rangle + \gamma_1'(\langle u\rangle) \langle v\rangle \right] \|uf(n)\|_1\,\\
	&\qquad\qquad +(\gamma_1'(\langle u\rangle)\langle u\rangle+\Gamma(\langle u\rangle))\|uf(n)\|_1\,.
\end{align*}

Let $K_*>0$ to be specified later. As
\begin{equation*}
	K_* \frac{d}{dt} \|n\|_1 = - K_* \|uf(n)\|_1\,,
\end{equation*}
we end up with
\begin{equation}
	\begin{split}
			& \frac{d}{dt} \bigg[ \frac{\|\nabla U\|_2^2}{2} + 2 \int_\Omega \Gamma_1(v)\ \mathrm{d}x - \langle u \rangle \int_\Omega \Gamma(v)\ \mathrm{d}x - \gamma_1(\langle u\rangle) \| v\|_1 + K_* \| n\|_1\\
	&\qquad \qquad + |\Omega| \bigg(\gamma_1(\langle u\rangle)\langle u\rangle+\langle u\rangle\Gamma(\langle u\rangle)-2\Gamma_1(\langle u\rangle)\bigg) \bigg] \\
	& \qquad  = - \int_\Omega \left[ 2 \gamma_1'(v) - \langle u \rangle \gamma'(v) \right] |\nabla v|^2\ \mathrm{d}x - \int_\Omega (v-\langle u\rangle) (\gamma_1(v)-\gamma_1(\langle u\rangle))\ \mathrm{d}x \\
	& \qquad\qquad - \int_\Omega (v-u)^2 \gamma(v)\ \mathrm{d}x + \int_\Omega U u f(n)\ \mathrm{d}x - \left[K_*+ \langle \Gamma(v) \rangle + \gamma_1'(\langle u\rangle) \langle v\rangle \right] \|uf(n)\|_1\,\\
	&\qquad\qquad +(\gamma_1'(\langle u\rangle)\langle u\rangle+\Gamma(\langle u\rangle))\|uf(n)\|_1\,.
	\end{split}\label{L02}
\end{equation}

It now follows from \eqref{L03}, Poincar\'e-Wirtinger's inequality
\begin{equation}
	\kappa_0 \|z-\langle z\rangle\|_2^2 \le \|\nabla z\|_2^2\,, \qquad z\in H^1(\Omega)\,, \label{PWE}
\end{equation} 
and Sobolev's embedding that, when $N\ge 3$,
\begin{equation*}
	\|U(t)\|_{2N/(N-2)} \le \kappa \|U(t)\|_{H^1}\le \kappa \|u-\langle u\rangle\|_2 \le \kappa\,, \qquad t\ge 0\,. 
\end{equation*}
As $U$ also solves $-\Delta U + U = U + u - \langle u \rangle$ in $\Omega$ with $\nabla U\cdot \nu=0$ on $\partial\Omega$, elliptic regularity provides a bound on $U$ in $W^{2,2N/(N-2)}(\Omega)$ and hence in $L^{2N/(N-6)_+}(\Omega)$. Thus, after a finite number of iterations, we conclude that there is $U^*>0$ depending only on $\Omega$ and $u^*$ such that
\begin{equation}
	|U(t,x)| \le U^*\,,  \qquad (t,x)\in [0,\infty)\times \Omega\,. \label{L04}
\end{equation}
A simpler argument gives \eqref{L04} when $N\in\{1,2\}$. We  now choose 
\begin{equation*}
	K_* \triangleq 1 + U^* + u^* \sup_{[a_*,a^*]}\{\gamma_1'\} + 2\Gamma(a^*) \ge 1\,,
\end{equation*}
so that, by \eqref{L03}, \eqref{L00}, \eqref{y101}, \eqref{L04}, and the monotonicity of $\Gamma_1$,
\begin{align*}
	&\int_\Omega U u f(n)\ \mathrm{d}x - \bigg[ K_* + \langle \Gamma(v) \rangle + \gamma_1'(\langle u\rangle) \langle v\rangle -\gamma_1'(\langle u\rangle)\langle u\rangle-\Gamma(\langle u\rangle)\bigg] \|uf(n)\|_1\\
	 & \qquad\qquad \le \left[  U^* + u^* \sup_{[a_*,a^*]}\{\gamma_1'\} + 2\Gamma(a^*) - K_*  \right] \|uf(n)\|_1 \\
	 &\qquad\qquad \le - \|uf(n)\|_1\,.
\end{align*}
We then infer from \eqref{L00}, \eqref{L02}, and the above estimate that
\begin{equation}
\frac{d}{dt} L + D_1 + D_2 + D_3 + D_4\le 0 \,, \label{L05}
\end{equation}
where
\begin{align*}
	L & \triangleq \frac{\|\nabla U\|_2^2}{2} + 2 \int_\Omega \Gamma_1(v)\ \mathrm{d}x - \langle u \rangle \int_\Omega \Gamma(v)\ \mathrm{d}x - \gamma_1(\langle u\rangle) \| v\|_1 + K_* \| n\|_1\\
	&\qquad + |\Omega| \bigg(\gamma_1(\langle u\rangle)\langle u\rangle+\langle u\rangle\Gamma(\langle u\rangle)-2\Gamma_1(\langle u\rangle)\bigg)\,,  \\
	D_1 & \triangleq \int_\Omega \left[ 2 \gamma_1'(v) - \langle u \rangle \gamma'(v) \right] |\nabla v|^2\ \mathrm{d}x \ge 0\,, \\
	D_2 & \triangleq \int_\Omega (v-\langle u\rangle) (\gamma_1(v)-\gamma_1(\langle u\rangle))\ \mathrm{d}x \ge 0\,, \\
	D_3 & \triangleq  \int_\Omega (v-u)^2 \gamma(v)\ \mathrm{d}x \geq\gamma(v^*)  \int_\Omega (v-u)^2 \ \mathrm{d}x \ge 0\,, \\
	D_4 & \triangleq  \|uf(n)\|_1\ge 0\,.
\end{align*}
We also note that the convexity of $\Gamma_1$ and $-\Gamma$, see  \eqref{L00}, and the non-negativity of $\langle u\rangle$ guarantee that
\begin{align}
	L & = \frac{\|\nabla U\|_2^2}{2} + K_* \| n\|_1 + 2 \int_\Omega \big( \Gamma_1(v) - \Gamma_1(\langle u\rangle) \big)\ \mathrm{d}x  \nonumber\\ 
	& \qquad + \langle u\rangle \int_\Omega \big( - \Gamma(v) + \Gamma(\langle u\rangle) \big)\ \mathrm{d}x - \gamma_1(\langle u\rangle ) \int_\Omega \big( v - \langle u\rangle \big)\ \mathrm{d}x \nonumber \\
	& \ge \frac{\|\nabla U\|_2^2}{2} + K_* \| n\|_1 + 2 \int_\Omega \gamma_1(\langle u\rangle) \big( v-\langle u\rangle \big)\ \mathrm{d}x - \langle u\rangle \gamma(\langle u\rangle) \int_\Omega \big( v - \langle u\rangle \big)\ \mathrm{d}x \nonumber \\
	& \qquad - \gamma_1(\langle u\rangle ) \int_\Omega \big( v - \langle u\rangle \big)\ \mathrm{d}x \nonumber \\
	& \ge \frac{\|\nabla U\|_2^2}{2} + K_* \| n\|_1 \ge 0\,. \label{L06}
\end{align}
A first consequence of \eqref{L05} and \eqref{L06} is that
\begin{equation}
	\int_0^\infty \left( \sum_{i=1}^4 D_i(s) \right)\ \mathrm{d}s \le L(0)  < \infty\,. \label{L07}
\end{equation}
Next, since $\langle u(t)\rangle\geq \langle u^{in}\rangle>0$ by \eqref{uL1} and $-\gamma'\ge 0$ by \eqref{L00}, we notice that
\begin{equation*}
	D_1\geq \int_\Omega \left[ 2 \gamma_1'(v) - \langle u^{in} \rangle \gamma'(v) \right] |\nabla v|^2\ \mathrm{d}x.
\end{equation*} 
We furthermore observe that $2 \gamma_1'(s) - \langle u^{in} \rangle \gamma'(s) >0$ for all $s>0$. Otherwise, there is $s_0>0$ such that $2 \gamma_1'(s_0) - \langle u^{in} \rangle \gamma'(s_0) =0$ and it follows that $\gamma_1'(s_0)=\gamma'(s_0) = 0$ due to the non-negativity \eqref{L00} of $\gamma_1'$ and $-\gamma'$. Thus, $0=\gamma_1'(s_0)=s_0\gamma'(s_0)+\gamma(s_0)=\gamma(s_0)$, which contradicts the positivity of $\gamma$ in $(0,\infty)$. Consequently, there exists $\kappa_1>0$ such that 
\begin{equation}\label{V09}
	D_1\geq \int_\Omega \left[ 2 \gamma_1'(v) - \langle u^{in} \rangle \gamma'(v) \right] |\nabla v|^2\ \mathrm{d}x\geq \kappa_1\|\nabla v\|_2^2\,.
\end{equation} 
Hence, by \eqref{L07},
\begin{equation}\label{L19}
	\int_0^\infty \|\nabla v(t)\|^2_2\ \rd t<\infty\,.
\end{equation}

Next, by \eqref{cp2}, \eqref{L03}, \eqref{L00}, and Cauchy-Schwarz' inequality, 
\begin{align}
	\|\partial_t v\|_2^2 + \frac{1}{2} \frac{d}{dt} \|\nabla v\|_2^2 & = \int_\Omega (u-v) \partial_t v\ \mathrm{d}x \label{L200} \\
	& \le \frac{1}{2} \|\partial_t v\|_2^2 + \frac{1}{2} \|u-v\|_2^2  \le \frac{1}{2} \|\partial_t v\|_2^2 + \frac{D_3}{2\gamma(v^*)} \,. \nonumber
\end{align}
Hence,
\begin{equation}
	\|\partial_t v\|_2^2 + \frac{d}{dt} \|\nabla v\|_2^2 \le \frac{D_3}{\gamma(v^*)}\,. \label{L08}
\end{equation}
We then infer from \eqref{L07} and \eqref{L08} that
\begin{equation}
	\sup_{t\ge 0} \{\|\nabla v(t)\|_{H^1}\} + \int_0^\infty \|\partial_t v(s)\|_2^2\ \mathrm{d}s < \infty\,. \label{L09}
\end{equation}

\subsection{Stabilization toward the spatially homogeneous solution} 

We are now in a position to study the large time behaviour of $(u,v,n)$ and first exploit the previous analysis to prove the convergence of $(u(t),v(t),n(t))$ to the spatially homogeneous steady  state $(m,m,0)$ as $t\to\infty$.

\begin{lemma}\label{lmstability} 
Recalling that $m=\langle u^{in} + n^{in} \rangle$, there holds
	\begin{equation*}
	\lim\limits_{t\rightarrow\infty}\bigg(	\|u(t)-m\|_{\infty}+\|v(t)-m\|_{\infty}+\|n(t)\|_{\infty}\bigg)=0.
	\end{equation*}
\end{lemma}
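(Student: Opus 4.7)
The plan is to combine the integrability of the dissipation terms supplied by the Lyapunov inequality \eqref{L05} with a parabolic-compactness argument to show that every $\omega$-limit point of the trajectory $t\mapsto(u(t),v(t),n(t))$ equals $(m,m,0)$; this will upgrade to uniform-in-$x$ convergence because the trajectory will be shown to be precompact in $C(\bar\Omega;\mathbb{R}^3)$.

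First, I would establish precompactness. Under the standing assumption that $(u,v,n)$ is uniformly-in-time bounded, the lower bound $v\ge v_*>0$ from \eqref{e00} ensures that $\gamma(v)$ stays in a compact subset of $(0,\infty)$, so the system \eqref{cp1}--\eqref{cpn} is uniformly non-degenerate along the trajectory. A standard parabolic bootstrap starting from the uniform $L^\infty$-bound and using interior Schauder estimates then yields a uniform estimate of $(u,v,n)$ in $C^{1+\alpha,2+\alpha}([t,t+2]\times\bar\Omega;\mathbb{R}^3)$ for some $\alpha\in(0,1)$ and all $t\ge 1$. By Arzel\`a--Ascoli, given any sequence $t_n\to\infty$, one can extract a subsequence (not relabeled) such that $(u(t_n+\cdot),v(t_n+\cdot),n(t_n+\cdot))$ converges in $C^{1,2}([-1,1]\times\bar\Omega;\mathbb{R}^3)$ to some $(\bar u,\bar v,\bar n)$ solving \eqref{cp1}--\eqref{cp3} on $[-1,1]\times\Omega$.

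Next, I would identify the limit. Passing to the limit in the three integrals
\[
\int_{t_n-1}^{t_n+1}\!\|\nabla v(s)\|_2^2\,\rd s,\qquad \int_{t_n-1}^{t_n+1}\!\|u(s)-v(s)\|_2^2\,\rd s,\qquad \int_{t_n-1}^{t_n+1}\!\|u(s)f(n(s))\|_1\,\rd s,
\]
each of which tends to zero thanks to \eqref{L19}, \eqref{L07}, and the lower bound $\gamma(v)\ge\gamma(v^*)$, yields $\nabla\bar v\equiv 0$, $\bar u\equiv\bar v$, and $\bar u\,f(\bar n)\equiv 0$ on $[-1,1]\times\Omega$. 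Inserting these into \eqref{cp2} (taking $\tau=\beta=1$ as in the preceding subsection, the general case being analogous) gives $\partial_t\bar v=-\bar v+\bar u=0$, so $\bar u\equiv\bar v\equiv a$ for some constant $a\ge 0$. To pin down $a$, I would use that \eqref{cp1} and the non-negativity of $u$ and $f$ render $t\mapsto\langle u(t)\rangle$ non-decreasing, hence convergent to some $u_\infty\in[\langle u^{in}\rangle,m]$ with $\langle u^{in}\rangle>0$ by \eqref{ini} and \eqref{uL1}; symmetrically, $\langle n(t)\rangle\searrow n_\infty=m-u_\infty$ by the mass conservation \eqref{e0}. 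Hence $a=u_\infty>0$, and then the assumption $f>0$ on $(0,\infty)$ combined with $f(0)=0$ and $a f(\bar n)\equiv 0$ forces $\bar n\equiv 0$, so $n_\infty=0$ and $a=m$. Since every subsequential limit equals $(m,m,0)$, the full trajectory converges to $(m,m,0)$ in $C(\bar\Omega;\mathbb{R}^3)$, which implies the claim.

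The main obstacle is the compactness step. It is what legitimizes the pointwise-in-time passage to the limit in the nonlinear terms $u\gamma(v)$ and $u f(n)$, and it relies crucially on the two-sided bound $v_*\le v\le v^*$ to avoid the degeneracy of the motility. Once this is secured, the identification of the $\omega$-limit set is essentially algebraic; the sole non-trivial use of the refined dissipation structure behind \eqref{L05} is the second integral above, without which the coupling $\bar u\equiv\bar v$ could not be derived and hence the limiting value $a=m$ could not be pinned down.
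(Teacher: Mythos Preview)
Your proof is correct and takes a genuinely different route from the paper. The paper proceeds by direct quantitative estimates: it first shows that $\|\nabla v(t)\|_2\to 0$ by combining the integrability \eqref{L19} with the bound \eqref{L09} on $\int_0^\infty\|\partial_t v\|_2^2$ to conclude that $\lim_{t\to\infty}\|\nabla v(t)\|_2^2$ exists and must vanish; it then derives ODE-type differential inequalities for $\|n-\langle n\rangle\|_2^2$ and $\|u-\langle u\rangle\|_2^2$ (inequalities \eqref{N01} and \eqref{U02}) driven by the integrable dissipation, and it pins down $m_n=0$ by a Fatou argument applied to $\int_t^{t+1}\|v f(n)\|_1$. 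Compactness (boundedness of the trajectory in $C^1(\bar\Omega)$) is invoked only at the very end to upgrade $L^p$ convergence to $L^\infty$ by interpolation. Your approach front-loads the compactness and identifies the limit through the $\omega$-limit set, which is cleaner and more conceptual; the algebraic identification $\bar u\equiv\bar v\equiv m$, $\bar n\equiv 0$ is indeed straightforward once the shifted trajectories converge in $C^{1,2}$. The trade-off is that the paper's differential inequalities \eqref{N01} and \eqref{U02} are reused verbatim in the subsequent subsection to establish the exponential decay of Lemma~\ref{lmexpstability}, whereas your soft argument does not furnish those and they would have to be derived separately.
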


\begin{proof}
By Theorem~\ref{local},
\begin{equation}
	m = \langle u(t)+n(t) \rangle\,, \qquad t\ge 0\,, \label{p4}
\end{equation}
and
\begin{equation}
	m_u \triangleq \lim_{t\to \infty} \langle u(t)\rangle \ge \langle u^{in}\rangle>0\,, \qquad m_n \triangleq \lim_{t\to \infty} \langle n(t)\rangle \in [0,\langle n^{in}\rangle]\,. \label{p5} 
\end{equation}
Thanks to \eqref{cp2} and \eqref{p5}, we also see that
\begin{equation}
	\lim_{t\to\infty} \langle v(t) \rangle = m_u\,. \label{L10}
\end{equation}

It next follows \eqref{L07}, \eqref{L09}, and Cauchy-Schwarz' inequality that
\begin{align*}
	\int_0^\infty \|(u-v)(t) \partial_t v(t)\|_1\ \mathrm{d}t & \le \frac{1}{2} \int_0^\infty \|(u-v)(t)\|_2^2\ \mathrm{d}t + \frac{1}{2}  \int_0^\infty \|\partial_t v(t)\|_2^2\ \mathrm{d}t  \\
	& \le \frac{1}{2\gamma(v^*)} \int_0^\infty D_3(t)\ \mathrm{d}t + \frac{1}{2}  \int_0^\infty \|\partial_t v(t)\|_2^2\ \mathrm{d}t < \infty\,.
\end{align*}
Consequently, $(u-v) \partial_t v$ belongs to $L^1((0,\infty)\times\Omega)$ and it follows from \eqref{L200} and \eqref{L09} that
\begin{equation*}
	V_\infty \triangleq \lim_{t\to\infty} \|\nabla v(t)\|_2^2 =  \|\nabla v^{in}\|_2^2 + 2 \int_0^\infty \int_\Omega (u-v)(t) \partial_t v(t)\ \mathrm{d}x\mathrm{d}t - 2 \int_0^\infty \|\partial_t v(t)\|_2^2\ \mathrm{d}t
\end{equation*}
is well-defined and finite. But \eqref{L19} implies that $V_\infty=0$ and we end up with
\begin{equation}
	\lim_{t\to\infty} \|\nabla v(t)\|_2 = 0\,. \label{L201}
\end{equation}
We now combine \eqref{L10}, \eqref{L201}, and Poincar\'e-Wirtinger's inequality \eqref{PWE} to obtain
\begin{align*}
	\lim_{t\to\infty} \|v(t)-m_u\|_2 & \le \lim_{t\to\infty} \left[ \|v(t)-\langle v(t)\rangle\|_2 + \sqrt{|\Omega|} |\langle v(t)\rangle - m_u| \right] \\
	& \le \frac{1}{\sqrt{\kappa_0}} \lim_{t\to\infty} \|\nabla v(t)\|_2 = 0\,.
\end{align*}
Together with the boundedness \eqref{L03} of $v$ in $L^\infty(\Omega)$, the above convergence implies that
\begin{equation}
	\lim_{t\to\infty} \|v(t)-m_u\|_p=0\,, \qquad p\in [1,\infty)\,. \label{p6}
\end{equation}

Next, according to \eqref{cpn}
\begin{align*}
	\frac{1}{2} \frac{d}{dt} \|n-\langle n\rangle\|_2^2 & = \int_\Omega (n-\langle n\rangle) \partial_t n\ \mathrm{d}x = - \|\nabla n\|_2^2 - \int_\Omega (n-\langle n\rangle) u f(n)\ \mathrm{d}x \\
	& \le - \|\nabla n\|_2^2 + \langle n\rangle \|uf(n)\|_1\,.
\end{align*}
 It then follows from \eqref{nup} (with $p=1$) and Poincar\'e-Wirtinger's inequality \eqref{PWE} that
\begin{equation}\label{N01}
	\frac{d}{dt} \|n-\langle n\rangle\|_2^2 + \kappa_0 \|n-\langle n\rangle\|_2^2 + \|\nabla n\|_2^2\le  2 \langle n\rangle \|uf(n)\|_1 \le 2 \langle n^{in} \rangle \|u f(n)\|_1\,.
\end{equation}
Integrating \eqref{N01} and using \eqref{L07}, we end up with
\begin{equation*}
	\lim_{t\to\infty} \|n(t) - \langle n(t)\rangle\|_2 = 0\,,
\end{equation*} 
which, together with \eqref{p5}, yields that
\begin{equation}
	\lim_{t\to\infty} \|n(t) - m_n\|_2  = 0\,. \label{p9}
\end{equation} 

Next, by \eqref{L03}, \eqref{L00}, and \eqref{L07},
\begin{equation*}
	\gamma(v^*) \int_0^\infty \|(v-u)(s)\|_2^2\ \mathrm{d}s  \le \int_0^\infty D_3(s) \ \mathrm{d}s < \infty\,,
\end{equation*}
so that
\begin{equation}
	\lim_{t\to\infty} \int_t^{t+1} \|(v-u)(s)\|_2^2\ \mathrm{d}s = 0\,. \label{p10}
\end{equation}
Consequently, by \eqref{nup},
\begin{align*}
	\int_t^{t+1} \|v(s)f(n(s))\|_1\ \mathrm{d}s & \le \int_t^{t+1} \|(v-u)(s)f(n(s))\|_1\ \mathrm{d}s + \int_t^{t+1} \|u(s)f(n(s))\|_1\ \mathrm{d}s \\
	& \le |\Omega|^{1/2}\sup_{[0,\|n^{in}\|_\infty]}\{f\} \left( \int_t^{t+1} \|(v-u)(s)\|_2^2\ \mathrm{d}s \right)^{1/2} \\
	& \qquad + \int_t^{t+1} \|u(s)f(n(s))\|_1\ \mathrm{d}s \,.
\end{align*}
Since the right-hand side of the above inequality converges to zero as $t\to\infty$ by \eqref{L07} and \eqref{p10}, we conclude that 
\begin{equation*}
	\lim_{t\to\infty} \int_t^{t+1} \|v(s)f(n(s))\|_1\ \mathrm{d}s = 0\,.
\end{equation*}
Since \eqref{p6} and \eqref{p9} entail also a.e. convergences, we deduce from Fatou's lemma that
\begin{equation*}
	m_u f(m_n) = 0\,.
\end{equation*}
Thus,  $m_n=0$ and $m_u=m$ in view of the positivity assumption $f(s)>f(0)=0$ for all $s>0$.

It remains to show the convergence of $u$ to $m$. Since 
\begin{equation*}
	\partial_t(u-\la u\ra)=\Delta(u\gamma(v))+uf(n)-\la uf(n)\ra \;\;\text{ in }\;\; (0,\infty)\times\Omega
\end{equation*}
by \eqref{cp1}, we deduce from \eqref{L03}, \eqref{L00}, and Young's inequality that
\begin{align*}
	\frac{d}{dt}\|u-\la u\ra\|_2^2&=-2\int_\Omega \gamma(v)|\nabla u|^2\ \rd x -2\int_\Omega u\gamma'(v)\nabla u\cdot \nabla v \ \rd x  \\
	& \qquad\qquad +2 \int_\Omega (u-\la u\ra)(uf(n)-\la uf(n)\ra)\ \rd x \\
	&\leq -\int_\Omega \gamma(v)|\nabla u|^2\ \rd x+\int_\Omega \frac{|\gamma'(v)|^2}{\gamma(v)}u^2|\nabla v|^2 \ \rd x+  2\int_\Omega (u-\la u\ra)uf(n)\ \rd x \\
	&\leq -\gamma(v^*)\|\nabla u\|_2^2 + \kappa \|\nabla v\|^2_2 + \kappa D_4\,.
\end{align*}
A further use of Poincar\'e-Wirtinger's inequality \eqref{PWE} gives
\begin{equation}
	\frac{d}{dt}\|u-\la u\ra\|_2^2 \leq -\frac{\gamma(v^*)}{2}\|\nabla u\|_2^2 - \frac
	{\kappa_0\gamma(v^*)}{2}\|u-\la u\ra\|_2^2 + \kappa \left( \|\nabla v\|^2_2 + D_4 \right)\,.\label{U02}
\end{equation}
Then, the above inequality, together with \eqref{L07}, \eqref{L19}, and \eqref{p5} yields that
\begin{equation}\label{U03}
	\lim\limits_{t\rightarrow\infty}\|u(t)-m\|_2=0.
\end{equation}

Finally, the convergence of $(u,v,n)$ toward $(m,m,0)$ in $L^p(\Omega;\mathbb{R}^3)$ for all $p\in[1,\infty)$ readily follows from \eqref{nup}, \eqref{L03}, \eqref{p6}, \eqref{p9}, and \eqref{U03}. Making use of smoothing and decaying properties of parabolic equations as outlined at the end of the proof of Theorem~\ref{TH2}, the trajectory $\{(u(t),v(t),n(t))\ :\ t\ge 1\}$ is actually bounded in $C^1(\bar{\Omega})$, from which the convergence of $(u,v,n)$ toward $(m,m,0)$ in $L^\infty(\Omega;\mathbb{R}^3)$ follows by an interpolation argument.
\end{proof}

\subsection{The exponential stabilization} 

In this section, we prove the exponential stabilization under the addition assumption 
\begin{equation}\label{asf2}
	f_0\triangleq\liminf\limits_{s\rightarrow0+}\frac{f(s)}{s}>0\,,
\end{equation}
bearing in mind that $f>0$ on $(0,\infty)$ and satisfies \eqref{asf}. Owing to Lemma~\ref{lmstability} and the above assumption, we may now assume there is $t_0>0$ such that  the trajectory  $\{(u(t), v(t), n(t))\ :\ t\ge t_0\}$ is  sufficiently close to the spatially homogeneous solution $(m,m,0)$; that is,
\begin{equation}\label{U06}
	\|u(t)-m\|_{\infty}\leq \frac{m}{2}\,, \qquad t\ge t_0\,,
\end{equation}
and
\begin{equation}\label{N002}
	f(n(t,x))\geq \frac{f_0}{2}n(t,x)\,, \qquad (t,x)\in [t_0,\infty)\times \bar{\Omega}\,.
\end{equation}

\begin{lemma}\label{lmexpstability}
There exist $\kappa_2>0$ and $\kappa_3>0$ such that, for all $t\geq t_0,$
	\begin{equation*}
		\|u(t)-m\|_{\infty} + \|v(t)-m\|_{\infty} + \|n(t)\|_{\infty} \leq \kappa_2 e^{-\kappa_3 t}\,.
	\end{equation*}
\end{lemma}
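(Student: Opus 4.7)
The plan is to combine the Lyapunov dissipation inequality \eqref{L05} with a quantitative coercivity estimate of the form $L \leq C_0(D_1+D_2+D_3+D_4)$ valid for $t\geq t_0$. This will give a Grönwall-type exponential decay $L(t) \leq L(t_0)\, e^{-(t-t_0)/C_0}$, from which the $L^\infty$-stabilization will be extracted by interpolation with the uniform-in-time $C^1$-bound \eqref{lipbd}.

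First I would sharpen the lower bound on the dissipation for $t\geq t_0$. The pointwise inequalities \eqref{U06} and \eqref{N002} immediately yield $D_4 = \|uf(n)\|_1 \geq (f_0 m/4)\|n\|_1$, which combined with $D_1 \geq \kappa_1 \|\nabla v\|_2^2$ from \eqref{V09} and $D_3 \geq \gamma(v^*)\|u-v\|_2^2$ gives $D_1+D_2+D_3+D_4 \gtrsim \|\nabla v\|_2^2 + \|u-v\|_2^2 + \|n\|_1$ for $t\geq t_0$.

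The main obstacle is the matching upper bound for $L$. The $v$-dependent block of $L$, namely
\begin{equation*}
F(v,\langle u\rangle) \triangleq \int_\Omega \bigl[2\Gamma_1(v) - \langle u\rangle \Gamma(v) - \gamma_1(\langle u\rangle) v\bigr]\,\rd x + |\Omega|\bigl[\gamma_1(\langle u\rangle)\langle u\rangle + \langle u\rangle \Gamma(\langle u\rangle) - 2\Gamma_1(\langle u\rangle)\bigr]\,,
\end{equation*}
has an integrand which is convex in $v$ (with second derivative $2\gamma_1'(v)-\langle u\rangle \gamma'(v)$ uniformly positive on the trapping region $[a_*,a^*]^2$ of $(\langle u\rangle,v)$ provided by \eqref{L03}--\eqref{y101}) and vanishes with its first derivative at $v=\langle u\rangle$. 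A Taylor expansion at $v=\langle u\rangle$ therefore yields $c\|v-\langle u\rangle\|_2^2 \leq F(v,\langle u\rangle) \leq C\|v-\langle u\rangle\|_2^2$. Writing $\|v-\langle u\rangle\|_2^2 \leq 2\|v-\langle v\rangle\|_2^2 + 2|\Omega|(\langle v\rangle - \langle u\rangle)^2$, Poincaré--Wirtinger provides $\|v-\langle v\rangle\|_2^2\leq \kappa_0^{-1}\|\nabla v\|_2^2$ and Cauchy--Schwarz yields $|\Omega|(\langle v\rangle-\langle u\rangle)^2\leq \|u-v\|_2^2$. A duality argument in \eqref{L01} (testing $\|\nabla U\|_2^2 = \int U(u-\langle u\rangle)\rd x$ against the decomposition $u-\langle u\rangle = (u-v)+(v-\langle v\rangle)+(\langle v\rangle-\langle u\rangle)$ and exploiting $\langle U\rangle=0$) gives $\|\nabla U\|_2 \lesssim \|\nabla v\|_2 + \|u-v\|_2$. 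Altogether $L\leq C_0(D_1+D_2+D_3+D_4)$ for some $C_0>0$ and $t\geq t_0$, and inserting this into \eqref{L05} produces $\frac{d}{dt} L + C_0^{-1} L \leq 0$, hence $L(t) \leq L(t_0)\,e^{-(t-t_0)/C_0}$ for $t\geq t_0$.

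Finally, the reverse bound $L \geq \tfrac{1}{2}\|\nabla U\|_2^2 + c\|v-\langle u\rangle\|_2^2 + K_*\|n\|_1$ yields exponential decay of $\|\nabla U\|_2$, $\|v-\langle u\rangle\|_2$, and $\|n\|_1$. Since $\langle u\rangle - m = -\langle n\rangle$ and $|\langle u\rangle-m|\leq \|n\|_1/|\Omega|$, this propagates to exponential $L^2$-decay of $v-m$ and $n$ (using also $\|n\|_2^2\leq \|n\|_\infty\|n\|_1$), and to exponential $H^{-1}$-decay of $u-m$. Interpolating the latter against the uniform $H^1$-bound coming from \eqref{lipbd} via $\|u-\langle u\rangle\|_2 \leq C\|u-\langle u\rangle\|_{H^{-1}}^{1/2}\|u-\langle u\rangle\|_{H^1}^{1/2}$ upgrades this to exponential $L^2$-decay of $u-m$. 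A Gagliardo--Nirenberg interpolation $\|z\|_\infty \leq C\|z\|_{C^1}^{1-\theta}\|z\|_2^{\theta}$ for some $\theta\in(0,1)$, applied to $z\in\{u-m,\,v-m,\,n\}$ using \eqref{lipbd}, then produces exponential $L^\infty$-decay at a (possibly reduced) rate $\delta_0>0$. The remaining interval $[0,t_0]$, on which $(u,v,n)$ is uniformly bounded, is absorbed by enlarging the constant $C$.
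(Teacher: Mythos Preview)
Your proposal is correct and in fact streamlines the paper's argument at the key step. The paper controls the term $(\langle v\rangle-\langle u\rangle)^2$ arising in the Taylor expansion of the $v$-block and in the bound on $\|\nabla U\|_2^2$ by first writing $\langle v\rangle-\langle u\rangle = -\langle\partial_t v\rangle$ (from \eqref{cp2}) and then bounding $|\langle\partial_t v\rangle|^2\le \|\partial_t v\|_2^2/|\Omega|$; since $\|\partial_t v\|_2^2$ is not part of the dissipation, the paper must augment $L$ by $\delta_1\|\nabla v\|_2^2$ and invoke \eqref{L08} to close the Gr\"onwall inequality. You instead bound $|\Omega|(\langle v\rangle-\langle u\rangle)^2\le \|u-v\|_2^2\le D_3/\gamma(v^*)$ directly by Cauchy--Schwarz, so $L\le C_0(D_1+D_3+D_4)$ holds without any augmentation. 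A second, minor, difference is at the end: the paper derives an ODE for $\|u-\langle u\rangle\|_2^2$ from \eqref{cp1} (see \eqref{U02}) and feeds in the already established exponential decay of $\|\nabla v\|_2^2$ and $D_4$, whereas you pass through the $H^{-1}$ decay of $u-\langle u\rangle$ (encoded in $\|\nabla U\|_2$) and interpolate with the uniform $H^1$ bound from \eqref{lipbd}. Both routes work; yours is more functional-analytic, the paper's more hands-on, and your coercivity argument in step~1 is genuinely cleaner.
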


\begin{proof}
	In order to show the exponential stabilization, we need to establish some control on the Lyapunov functional by the dissipation terms. First, by \eqref{cp2}, \eqref{L01},  Poincar\'e-Wirtinger's inequality \eqref{PWE}, \eqref{V09}, and Young's inequality,
\begin{align}\label{U04}
	\|\nabla U\|_2^2\leq& \|u-\langle u\rangle\|_2^2\le \kappa \left( \|u-v\|_2^2 + \|v-\langle v\rangle\|_2^2 + \|\langle v\rangle - \langle u\rangle\|_2^2 \right) \non\\
	\leq &\kappa \left( \|u-v\|_2^2+\|\nabla v\|_2^2+\|\langle \partial_t v \rangle\|_2^2 \right)\non\\
	 	 \leq&  \kappa \left( D_1+D_3+\|\partial_t v\|_2^2 \right)\,.
\end{align}

Next, since \begin{equation*}
	\Gamma_1(v)=\Gamma_1(\langle u\rangle)+\gamma_1(\langle u\rangle)(v-\langle u\rangle)+\frac12\gamma_1'(\xi_1)(v-\langle u\rangle)^2
\end{equation*}and
\begin{equation*}
	\Gamma(v)=\Gamma(\langle u\rangle)+\gamma(\langle u\rangle)(v-\langle u\rangle)+\frac12\gamma'(\xi_2)(v-\langle u\rangle)^2
\end{equation*} 
with some $\xi_i$, $i=1,2$, depending on $(t,x)$ and lying between $v$ and $\la u\ra$, we notice that, using again \eqref{cp2}, as well as the bounds \eqref{y101} and the regularity \eqref{g1} of $\gamma$,
\begin{align*}
	\Gamma_1(v)-\Gamma_1(\langle u\rangle)+\gamma_1(\langle u\rangle)\langle u\rangle-\gamma_1(\langle u\rangle)v & =\frac{1}{2}\gamma_1'(\xi_1)(v-\langle u\rangle)^2\\
	&\leq \sup_{[a_*,a^*]}\{\gamma_1'\} \left[ (v-\langle v\rangle)^2+(\langle v\rangle-\langle u\rangle)^2 \right]\\
	&\leq \kappa \left[ (v-\langle v\rangle)^2 + |\langle \partial_t v\rangle|^2 \right] \,.
\end{align*}
Similarly, \begin{align*}
	\Gamma_1(v)-\Gamma_1(\langle u\rangle)-\langle u\rangle\Gamma(v)+\langle u\rangle\Gamma(\langle u\rangle) &=\frac{1}{2} \left( \gamma_1'(\xi_1) - \langle u\rangle \gamma'(\xi_2) \right) (v-\langle u\rangle)^2 \\
	&\leq \kappa \left[ (v-\langle v\rangle)^2 + |\langle \partial_t v\rangle|^2 \right] \,.
\end{align*}

Consequently, due to \eqref{V09} and Poincar\'e-Wirtinger's inequality \eqref{PWE},
\begin{align}\label{Gamma009}
	 2 \int_\Omega \Gamma_1(v)\ \mathrm{d}x & - \langle u \rangle \int_\Omega \Gamma(v)\ \mathrm{d}x - \gamma_1(\langle u\rangle) \| v\|_1 + |\Omega| \bigg(\gamma_1(\langle u\rangle)\langle u\rangle+\langle u\rangle\Gamma(\langle u\rangle)-2\Gamma_1(\langle u\rangle)\bigg)\nonumber\\
	 & = \int_\Omega \left[ \Gamma_1(v)-\Gamma_1(\langle u\rangle)+\gamma_1(\langle u\rangle)\langle u\rangle-\gamma_1(\langle u\rangle)v \right]\ \mathrm{d}x  \nonumber \\
	 & \qquad + \int_\Omega \left[ \Gamma_1(v)-\Gamma_1(\langle u\rangle)-\langle u\rangle\Gamma(v)+\langle u\rangle\Gamma(\langle u\rangle) \right]\ \mathrm{d}x  \nonumber \\
	 & \leq \kappa \int_\Omega \left[ (v-\langle v\rangle)^2 + |\langle \partial_t v\rangle|^2 \right]\ \mathrm{d}x \nonumber\\
	 &\leq \kappa \left( \|\nabla v\|_2^2+\|\partial_t v\|_2^2 \right) \leq \kappa \left( D_1 + \|\partial_t v\|_2^2 \right)\,.
\end{align}
	
Next, \eqref{U06} and \eqref{N002} indicate that, for all $t\geq t_0$,
\begin{align}\label{N03}
	D_4 = \|uf(n)\|_1\geq \frac{m}{2}\|f(n)\|_1\geq \frac{mf_0}{4}\|n\|_1\,.
\end{align}

Thus, it follows from \eqref{U04}, \eqref{Gamma009}, and \eqref{N03} that, for $t\geq t_0$,
\begin{equation*}
	L \leq \kappa_4 \left( D_1+D_3+D_4 + \|\partial_t v\|_2^2 \right)\,,
\end{equation*}	
with $\kappa_4\ge 1$. Combining the above estimate with \eqref{L05}, \eqref{V09}, and \eqref{L08}, we obtain, for $\delta_1\in (0,1)$, $\delta_2\in (0,1)$, and $t\ge t_0$,
\begin{align*}
	&\frac{d}{dt}(L+\delta_1\|\nabla v\|_2^2)+\delta_2 (L+\delta_1\|\nabla v\|_2^2)+\sum_{i=1}^4 D_i+\delta_1\|\partial_t v\|_2^2\\
	&\qquad\leq \delta_2 (L+\delta_1\|\nabla v\|_2^2) +\frac{\delta_1}{\gamma(v^*)}D_3\\
	&\qquad\leq \delta_2 \kappa_4 \left( D_1+D_3+D_4 + \|\partial_t v\|_2^2 \right) + \frac{\delta_1 \delta_2}{\kappa_1} D_1  + \frac{\delta_1}{\gamma(v^*)}D_3 \\
	& \qquad \leq \left( \delta_2 \kappa_4 + \frac{\delta_1}{\kappa_1} + \frac{\delta_1}{\gamma(v^*)} \right) \sum_{i=1}^4 D_i + \delta_2 \kappa_4 \|\partial_t v\|_2^2\,.
\end{align*}
Then, picking $\delta_1\in (0,1)$ and $\delta_2\in (0,1)$ such that
\begin{equation*}
	\delta_2 \triangleq \frac{\delta_1}{2 \kappa_4} \;\;\text{ and } \left( \frac{1}{2} + \frac{1}{\kappa_1} + \frac{1}{\gamma(v^*)} \right) \delta_1 \triangleq \frac{1}{2}\,,
\end{equation*}
we finally arrive at
\begin{equation}
	\frac{d}{dt}(L+\delta_1\|\nabla v\|_2^2)+\delta_2 (L+\delta_1\|\nabla v\|_2^2)\leq 0, \qquad t\ge t_0\,.
\end{equation}
Integrating the above differential inequality leads us to
\begin{equation*}
	L(t)+\delta_1\|\nabla v(t)\|_2^2\leq (L(t_0)+\delta_1\|\nabla v(t_0)\|_2^2) e^{-\delta_2(t-t_0)}\,\qquad\text{for all}\;t\geq t_0.
\end{equation*}
Recalling \eqref{L06}, we obtain 
\begin{equation}\label{expsta01}
	\|\nabla U(t)\|_2^2+\|\nabla v(t)\|_2^2+\|n(t)\|_1\leq \kappa e^{-\delta_2 t}\,, \qquad t\ge t_0\,.
\end{equation}

We next deduce from \eqref{N01}, \eqref{U02}, and the definition of $D_4$ that
\begin{align*}
	\frac{d}{dt} \bigg( \|u-\langle u\rangle\|_2^2 + \|n-\langle n\rangle\|_2^2 \bigg) & \le - \frac{\gamma(v^*)}{2} \|\nabla u\|_2^2 - \frac{\kappa_0 \gamma(v^*)}{2} \|u-\langle u\rangle\|_2^2 + \kappa \left( \|\nabla v\|_2^2 + \|u f(n)\|_1 \right) \\
	& \qquad - \|\nabla n\|_2^2 - \kappa_0 \|n-\langle n\rangle\|_2^2 + 2\langle n \rangle \|u f(n)\|_1\,.
\end{align*}
Since \begin{equation*}
	\kappa \|uf(n)\|_1+2\langle n\rangle \|u f(n)\|_1 \le (\kappa+2m) u^* \max_{[0,\|n^{in}\|_\infty]}|f'| \|n\|_1 \le \kappa \|n\|_1
\end{equation*}
by \eqref{asf}, \eqref{e0}, \eqref{nup} (with $p=\infty$), and \eqref{L03}, we infer from the above estimates  that
\begin{align*}
	\frac{d}{dt} \bigg( \|u-\langle u\rangle\|_2^2 + \|n-\langle n\rangle\|_2^2 \bigg) & \le  - \frac{\kappa_0 \gamma(v^*)}{2} \|u-\langle u\rangle\|_2^2  + \kappa \left(  \|\nabla v\|_2^2 + \|n\|_1 \right) - \kappa_0 \|n-\langle n\rangle\|_2^2 \\
	& \le - \delta_3 \bigg( \|u-\langle u\rangle\|_2^2 + \|n-\langle n\rangle\|_2^2 \bigg) + \kappa \left( \|\nabla v\|_2^2 + \|n\|_1 \right)\,,
\end{align*}
with $2\delta_3 \triangleq \kappa_0 \min\{\gamma(v^*), 2\}>0$. Integrating the above differential inequality and using \eqref{expsta01} and Poincar\'e-Wirtinger's inequality \eqref{PWE} give
\begin{equation}
\|u(t) - \langle u(t)\rangle\|_{2}^2 + \|v(t)-\langle v(t)\rangle\|_2^2 + \|n(t)-\langle n(t)\rangle\|_2^2 \leq Ce^{-\min\{\delta_2,\delta_3\} t}\qquad\text{for all}\;\; t\geq t_0\,. \label{E01}
\end{equation}

Finally, by \eqref{cp2}, \eqref{e0}, and \eqref{expsta01},
\begin{align*}
	& |\langle n(t)\rangle| = \frac{\|n(t)\|_1}{|\Omega|} \le \kappa e^{-\delta_2 t}\,, \qquad t\ge t_0\,, \\
	& |\langle u(t) \rangle - m| = |\langle n(t)\rangle| \le \kappa e^{-\delta_2 t}\,, \qquad t\ge t_0\,, 
\end{align*}
and 
\begin{align*}
	|\langle v(t) \rangle - m | & = \left|  \big( \langle v(t_0) \rangle  - m \big) e^{t_0-t} + \int_{t_0}^t \big( \langle u(s) \rangle - m \big) e^{s-t}\ \mathrm{d}s \right| \\
	& \le |\langle v(t_0) \rangle - m| e^{t_0-t} + \int_{t_0}^t |\langle u(s) \rangle - m| e^{s-t}\ \mathrm{d}s  \\
	& \le \big( \langle v(t_0) \rangle + m \big)  e^{t_0-t} + \kappa e^{-\delta_2 t}\,, \qquad t\ge t_0\,,
\end{align*}
recalling that $0<\delta_2<1$. Together with \eqref{E01}, the above estimates entail that
\begin{equation}
	\|u(t) - m\|_{2}^2 + \|v(t)-m\|_2^2 + \|n(t)\|_2^2 \leq Ce^{-\delta_4 t}\qquad\text{for all}\;\; t\geq t_0\,, \label{E02}
\end{equation}
with $\delta_4 \triangleq \min\{\delta_2,\delta_3\}\in (0,1)$.

With the exponential convergence \eqref{E02} at hand, we can further improve the exponential convergence to more regular spaces via standard bootstrap argument. A similar procedure can be found in \cite{Jiang2020}. Hence we omit the details here.
\end{proof}

\begin{proof}[Proof of Theorem~\ref{TH3}] 
Theorem~\ref{TH3} follows directly from Lemma~\ref{lmstability} and Lemma~\ref{lmexpstability}.
\end{proof}

\section*{Acknowledgments}
J.~Jiang  is supported by Hubei Provincial Natural Science Foundation under the grant No. 2020CFB602. Ph.~Lauren\c{c}ot thanks Ariane Trescases for helpful discussions on chemotaxis models with density-suppressed motility, as well as the Institut f\"ur Angewandte Mathematik, Leibniz Universit\"at Hannover, where part of this work was done, for its kind hospitality. Y.Y.~Zhang is supported by National Natural Science Foundation of China (NSFC) under the grant No. 11431005 and  Shanghai Science and Technology Committee (STCSM) under the grant No. 18dz2271000.

\bibliographystyle{siam}
\bibliography{Reference}

\end{document}